\newcommand{\B}{\mathscr{B}}    
\newcommand{\e}{\varepsilon}   
\newcommand{\EE}{\mathbb{E}}     
\newcommand{\F}{\mathcal{F}}    
\newcommand{\ind}{\mathbbm{1}} 
\newcommand{\J}{\mathcal{J}}
\renewcommand{\L}{\mathcal{L}}
\newcommand{\M}{\mathcal{M}}    %
\newcommand{\NN}{\mathbb{N}}
\newcommand{\RR}{\mathbb{R}}    
\newcommand{\V}{\hbox{Var}}
\newcommand{\ZZ}{\mathbb{Z}}
\numberwithin{equation}{section}
\theoremstyle{plain}
\newtheorem{theorem}{Theorem}[section]
\newtheorem{lemma}[theorem]{Lemma}
\newtheorem{proposition}[theorem]{Proposition}
\newtheorem{corollary}[theorem]{Corollary}
\newtheorem*{remark}{Remark}
\newtheorem*{example}{Example}
\theoremstyle{definition}
\newtheorem{definition}{Definition}[section]
\begin{document}

\title{Proximinality and uniformly approximable sets in $L^p$}

\author{Guillaume Grelier\thanks{Departamento de Matemáticas, Universidad de Murcia, 
Campus de Espinardo, 30100 Espinardo, Murcia, Spain email: g.grelier@um.es}  \& Jaime San Mart\'in\thanks{CMM-DIM;  Universidad de Chile; UMI-CNRS 2807; 
Casilla 170-3 Correo 3 Santiago; Chile. \break email: jsanmart@dim.uchile.cl}}

\maketitle

\begin{abstract}
For any $p\in[1,\infty]$, we prove that the set of simple functions taking at most $k$ different values is proximinal in $L^p$ for all $k\geq 1$. 
We introduce the class of uniformly approximable subsets of $L^p$, which is larger than the class of uniformly integrable sets. This new class is 
characterized in terms of the $p$-variation if $p\in[1,\infty)$ and in terms of covering numbers if $p=\infty$. We study properties of uniformly 
approximable sets. In particular, we prove that the convex hull of a uniformly approximable bounded set is also uniformly approximable and that 
this class is stable under H\"older transformations. We also prove that, for $p\in [1,\infty)$, the unit ball of $L^p$ is uniformly approximable 
if and only if $L^p$ is finite-dimensional, while for $p=\infty$ the unit ball is always uniformly approximable.  
\end{abstract}

\noindent\emph{\bf Key words:} Proximinal, Chebyschev, Variation, Covering numbers.

\noindent\emph{\bf MSC2010:} Primary 41A50, 26A42; Secondary 41A30, 28Axx.

\section{Introduction}
In this paper we study the approximation of measurable functions by simple functions taking at most $k$ values, for $k\in \NN$. 
This problem has important consequences in multiple applications, where for example, one seeks for reduction of dimensionality, 
among many others. For example, the embedding of metric spaces into finite-dimensional normed spaces with small dimension is one of 
the main issue in non-linear analysis (see \cite{Bourgain,JL,Matousek}). These results have deep consequences in order to design approximation algorithms, 
for instance for the Sparsest Cut problem  (see \cite{Naor}). When we aproximate a given function $f\in L^p(\Omega,\F,\mu)$ by simple functions, 
the number of terms in those approximations growths to infinity 
in general. Here, a main concern is what we can say if we restrict the number of terms in the approximations. In particular, what we can say about subsets of 
$L^p(\Omega,\F,\mu)$ that can be uniformly approximated by simple functions taking $k$ values, as $k$ growth to $\infty$. As we shall see, this new concept
is more general than uniform integrability or compactness, and we fully characterize it in terms of a new measure of variation defined for functions 
in $L^p(\Omega,\F,\mu)$ for $p\in [1,\infty)$, and in terms of covering numbers in the case of $p=\infty$.

Let us fix some notations we need to explain the main results of this paper. Consider $(\Omega,\F,\mu)$ a measure space. For any $k\geq 1$, we denote by 
$\mathscr{G}_{p,k}(\Omega,\F, \mu)$, or simply $\mathscr{G}_{p,k}$ when the measure space $(\Omega,\F, \mu)$ is 
clear from the context, the set of simple functions given by
$$
\mathscr{G}_{p,k}=\left\{\sum\limits_{i=1}^l a_i \ind_{A_i}\in L^p(\Omega,\F,\mu) :\ 
\{A_i\}_{1\le i\le l} \hbox{ measurable partition of } \Omega,\ a_i\in \RR 
\hbox{ for all $i$, } l\le k\right\}.
$$
\begin{remark} Assume $\mu$ is a finite measure. Then $\mathscr{G}_{p,k}=\mathscr{G}_{1,k}$ for all $p\in [1,\infty]$ is just the 
set of simple measurable functions that takes
at most $k$ values. If $\mu$ is an infinite measure, then $h=\sum_{i=1}^k a_i \ind_{A_i}$, where $\{A_i\}_{1\leq i\leq k}$ is a 
measurable partition, belongs to $\mathscr{G}_{p,k}$, for 
$p\in [1,\infty)$, if and only if  $\mu(A_i)=\infty$ implies $a_i=0$. So, again for all $p\in [1,\infty)$ it holds that 
$\mathscr{G}_{p,k}=\mathscr{G}_{1,k}\subset \mathscr{G}_{\infty,k}$, and 
the latter is the set of all simple measurable functions that takes at most $k$ values.
\end{remark}

We recall some notions from approximation theory. Let $X$ be a Banach space and let $K$ be a closed subset of $X$. The \textit{metric projection} on $K$ 
is the multi-valued mapping $P_K:X\rightrightarrows K$ defined by $P_K(x)=\{y\in K\ :\ \|x-y\|=d(x,K)\}$ (where $d(A,B)$ is the 
distance between two subsets $A$ and $B$ of $X$). If $P_K(x)$ is not empty for all 
$x\in X$, we say that $K$ is \textit{proximinal}. If $P_K(x)$ is a singleton for all $x\in X$, we say that $K$ is \textit{Chebyshev}. 
Section \ref{sec:minimizing} is devoted to show the following result.

\begin{theorem}
\label{mainresult}
Let $(\Omega,\F,\mu)$ be a measure space and $p\in[1,+\infty]$. Then $\mathscr{G}_{p,k}$ is proximinal in $L^p(\Omega,\F,\mu)$ for all $k\geq 1$.
\end{theorem}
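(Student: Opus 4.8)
The plan is to reduce the problem to a finite-dimensional minimization over the attainable \emph{values}, exploiting that membership in $\mathscr{G}_{p,k}$ constrains only the number of values, not the geometry of the partition. Fix $f\in L^p$. Any $g\in\mathscr{G}_{p,k}$ takes at most $k$ values, say $a_1,\dots,a_k\in\RR$ (padding by repetitions if there are fewer), and then pointwise $|f(\omega)-g(\omega)|\ge \min_{1\le i\le k}|f(\omega)-a_i|$. Conversely, given values $a_1,\dots,a_k$, the \emph{nearest-value} assignment $g(\omega)=a_{j(\omega)}$, where $a_{j(\omega)}$ is a closest point to $f(\omega)$ among $\{a_1,\dots,a_k\}$, is measurable (the sets involved are preimages under $f$ of the Voronoi intervals, ties occurring on a finite set of midpoints and broken arbitrarily) and realizes equality. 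I would record this as the identity
$$ d(f,\mathscr{G}_{p,k})^p=\inf_{(a_1,\dots,a_k)\in\RR^k}\ \int_\Omega \min_{1\le i\le k}|f-a_i|^p\,\ud\mu \qquad (p<\infty), $$
with the analogue $d(f,\mathscr{G}_{\infty,k})=\inf_a \operatorname{ess\,sup}_\omega\min_i|f(\omega)-a_i|$ for $p=\infty$. Writing $\Phi(a)=\int_\Omega \min_i|f-a_i|^p\,\ud\mu$, note that once the infimum is finite the triangle inequality forces the nearest-value $g$ into $L^p$, hence into $\mathscr{G}_{p,k}$ (the infinite-measure constraint in the definition being exactly the $L^p$-membership condition); so it suffices to prove the finite-dimensional infimum is attained.

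For $p\in[1,\infty)$ I would prove attainment of $\inf_{\RR^k}\Phi$ by a direct Fatou argument. Since $\Phi(0,\dots,0)=\|f\|_p^p<\infty$, the infimum $d_k$ is finite; pick a minimizing sequence $a^n$ and, passing to a subsequence, assume each coordinate $a_i^n$ converges in $[-\infty,+\infty]$. Let $F$ be the set of indices with finite limit $\bar a_i$. Then $F\neq\varnothing$: if every coordinate escaped to infinity, then $\min_i|f(\omega)-a_i^n|\to\infty$ a.e., forcing $\Phi(a^n)\to\infty$ by Fatou and contradicting minimality. For fixed $\omega$ the coordinates outside $F$ eventually cease to achieve the minimum, so $\min_{1\le i\le k}|f(\omega)-a_i^n|^p\to \min_{i\in F}|f(\omega)-\bar a_i|^p$ a.e.; Fatou then gives $\int \min_{i\in F}|f-\bar a_i|^p\le\liminf_n\Phi(a^n)=d_k$. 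Conversely, padding $\{\bar a_i:i\in F\}$ back to a vector of $\RR^k$ by repetition shows the left-hand side equals $\Phi(\tilde a)\ge d_k$. Hence equality holds and the configuration $\{\bar a_i:i\in F\}$, of at most $k$ values, attains $d_k$; its nearest-value assignment is a best approximant.

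For $p=\infty$ I would instead use compactness on the essential range $R=\operatorname{ess\,range}(f)$, a nonempty compact subset of $\RR$. Since $t\mapsto\min_i|t-a_i|$ is continuous, $\Psi(a):=\operatorname{ess\,sup}_\omega\min_i|f(\omega)-a_i|=\max_{t\in R}\min_i|t-a_i|$, and $\Psi$ is $1$-Lipschitz in $a$. One may restrict the $a_i$ to the compact interval $[\min R,\max R]$, since projecting any $a_i$ onto this interval does not increase $|t-a_i|$ for any $t\in R$, hence does not increase $\Psi$. Minimizing the continuous function $\Psi$ over the compact box $[\min R,\max R]^k$ yields a minimizer, whose nearest-value assignment is the desired best approximant.

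The main obstacle is the absence of compactness in the finite-dimensional problem when $p<\infty$: on a set of small (or, in the infinite-measure setting, delicate) measure the optimal values could a priori run off to infinity. The key point is that such escaping values merely amount to using strictly fewer than $k$ values, which is still permitted in $\mathscr{G}_{p,k}$; the Fatou estimate makes this precise and shows no mass is lost in the limit. The remaining verifications—measurability of the Voronoi sets, the pointwise convergence of the minima, and the automatic $L^p$-membership of the nearest-value assignment—are routine.
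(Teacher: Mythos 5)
Your proof is correct, and it takes a genuinely different and more economical route than the paper. The paper never isolates your key identity $d(f,\mathscr{G}_{p,k})^p=\inf_{a\in\RR^k}\int_\Omega\min_i|f-a_i|^p\,d\mu$ as the organizing principle; instead it works with minimizing sequences of \emph{functions} $g_n=\sum_i b_{i,n}\ind_{A_{i,n}}$ and must control both the partitions and the values. For finite measure it first builds a uniformly bounded minimizing sequence (replacing values by $p$-th means and discarding cells of vanishing measure), extracts a convergent subsequence of value vectors in $[-C,C]^k$, and only then performs the Voronoi reassignment $g=\sum_j z_j\ind_{f^{-1}(I_j)}$; for infinite measure it needs a separate, considerably longer argument tracking the distinguished zero value, monotone subsequences of the cut points $r_{i,n}$, and a combination of Fatou and dominated convergence. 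Your reduction to a finite-dimensional problem over value vectors makes the partition a non-issue from the outset (the Voronoi assignment is always pointwise optimal), and your Fatou treatment of coordinates escaping to $\pm\infty$ --- observing that they simply drop out of the pointwise minimum, so the limit configuration uses at most $k$ values, which is still admissible --- handles finite and infinite measure uniformly. Your observation that $\Phi(a)<\infty$ automatically forces the nearest-value assignment into $L^p$ (hence into $\mathscr{G}_{p,k}$, including the infinite-measure cells carrying the value $0$) is the right way to dispose of the only delicate point. For $p=\infty$ your use of the essential range replaces the paper's covering-number functional $\eta_k(f)$ and its quantification over representatives $h=f$ a.e.; the two are essentially equivalent, but yours is representative-free. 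What the paper's longer proof buys, and yours does not directly deliver, is the structural refinement that a minimizer can be taken in $f$-special form (values equal to $p$-th means on the Voronoi cells of $f$), which is then exploited for the uniqueness results (Theorem \ref{pro:uniqueness}), Corollary \ref{cor:all_specialform}, and the algorithmic remark; one would have to post-process your minimizer to recover that. For the bare statement of Theorem \ref{mainresult}, your argument is complete.
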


In other words, the distance of a function $f$ to $\mathscr{G}_{p,k}$ is attained at some $g\in\mathscr G_{p,k}$. Note that most of the classical 
results on the existence of a solution cannot be used in this case since $\mathscr{G}_{p,k}$ is obviously not compact in the strong topology, nor convex, and as
we will see, it is not closed in the weak topology, in general. The proof of this result is divided into several steps. We first deal with the case $p\in[1,\infty)$ 
and we also prove that a minimum can be chosen to have a particular form (see Theorem \ref{pro:minimum} when $\mu$ is 
finite and Theorem \ref{pro:minimum2} if not). The proof is rather technical since we deal with any kind of measure 
(not only finite or $\sigma$-finite). In case $\mu$ is finite we also give conditions to ensure that there is a unique minimizer (see Theorem \ref{pro:uniqueness}).
In general, the set of minimizers is not a singleton, it can even exists a continuum of minimizers. Then, it makes sense to study
if the metric projection $P_{\mathscr{G}_{p,k}}$ has a continuous selection. In general, there is no continuous selection, unless $L^p(\Omega,\F,\mu)$
is finite dimensional (see the Remark before Section 3).

When $p=\infty$,  in Proposition \ref{pro:minimum3} we prove that $\mathscr{G}_{\infty,k}$ is proximinal. The proofs we provide are somehow
constructive in nature, but still there is a long way to go for obtaining useful algorithms, 
which in itself, we think, will be important in many applications.
\smallskip

An important role in this section is played by $\M_p(f,A)$, the $p$-th mean of $f$ on a set $A$ (see Definition \ref{eq:M_p}).
In particular, for $p=2$ we have $\M_2(f,A)=\frac{1}{\mu(A)}\int_A f(x) \ d\mu(x)$. A well-known approximation associated to a finite measurable
partition $\mathcal{P}=\{A_i\}_{1\leq i\leq k}$ is given by
$$
\EE^{\mathcal{P}}(f)=\sum\limits_{i=1}^k \ \M_2(f,A_i) \ind_{A_i},
$$
which corresponds to the conditional expectation of $f$ over the $\sigma$-field generated by $\mathcal{P}$. 
\medskip

In Section \ref{sec:pvariation}, we introduce the $p$-variation $\V_{p,k}(f)$ of a function $f\in L^p(\Omega,\F,\mu)$, for $p\in [1,\infty)$ 
and we studied some of its properties. 
The $p$-variation of a function allow us to control the distance of $f$ to the sets $\mathscr{G}_{p,k}$, up to a factor of $2$ (see Proposition \ref{variation}). 
This notion will be a useful tool to characterize the uniform approximability of sets in the following section and whose definition is the following:

\begin{definition}\label{def_UA}
Let $(\Omega,\F,\mu)$ be a measure space and $p\in[1,+\infty]$. Let $\mathscr{A}\subset L^p(\Omega,\F,\mu)$. For $\varepsilon>0$, we define 
$$
N_{p,\varepsilon}(\mathscr{A})=\inf\{k\geq 1\ :\ \forall f\in
\mathscr{A},\ \exists h\in\mathscr{G}_{p,k}\ \ \|f-h\|_p\le \e\}.
$$ 
As usual if the set  where the infimum is taken is empty we set
$N_{p,\varepsilon}(\mathscr{A})=\infty$. 
We say that $\mathscr{A}$ is
\textit{uniformly approximable} (in short UA) in $L^p(\Omega,\F,\mu)$ if
$N_{p,\varepsilon}(\mathscr{A})<\infty$ for any $\varepsilon>0$. 
\end{definition}

Concretely a set $\mathscr{A}$ is UA in $L^p(\Omega,\F,\mu)$ if for any
$\varepsilon>0$ there exists $k\geq 1$ such that any function in $\mathscr{A}$ 
can be $\varepsilon$-approximated in $L^p(\Omega,\F,\mu)$ by simple functions taking less than $k$ 
different values. Notice that $\mathscr{A}$ is UA if and only if 
$$
\lim\limits_{k\to \infty}\sup\limits_{f\in \mathscr{A}}\inf\{\|f-g\|_p:\ g\in \mathscr{G}_{p,k}\}=0.
$$
We point out that a similar quantity leads to relatively compactness of $\mathscr{A}$. Indeed, if $1\le p<\infty$, 
a result inspired by M. Riesz (see Theorem 4.7.28 in \cite{Bogachev}) says that $K\subset L^p(\Omega,\F,\mu)$ 
is relatively compact if and only if $K$ is bounded in $L^p(\Omega,\F,\mu)$ and
$$
\inf\limits_{\mathcal{P}}\sup\limits_{f\in K} \|f-\EE^{\mathcal{P}}(f)\|_p=0.
$$
We point out that for every finite measurable partition $\mathcal{P}$, with at most $k$ atoms it holds 
$$
\inf\{\|f-g\|_p:\ g\in \mathscr{G}_{p,k}\}\le \|f-\EE^{\mathcal{P}}(f)\|_p, 
$$
so relatively compactness implies UA, a fact
that can be easily proved directly.
\medskip

The last part of the paper, Section \ref{sec:UAp}, is dedicated to the study of uniformly approximable sets. We will give some examples of UA sets and prove that 
it is a larger class than the class of uniformly integrable sets. We also characterize this property in terms of covering numbers 
if $p=\infty$ and in terms of the $p$-variation if $p<\infty$. The covering numbers $\mathcal{N}(f,\varepsilon)$ of a 
function $f$ is simply defined as the covering number of its range, up to measure $0$. We will prove the following two results:

\begin{theorem}
Let $(\Omega,\F,\mu)$ be a measure space and let $\mathscr{A}\subset L^\infty(\Omega,\F,\mu)$. 
The following assertions are equivalent:
\begin{enumerate}
    \item[(i)] $\mathscr{A}$ is UA;
    \item[(ii)] $\sup_{f\in\mathscr{A}}\mathcal{N}(f,\varepsilon)<\infty$ for all $\varepsilon>0$.
\end{enumerate}
\end{theorem}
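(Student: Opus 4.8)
The plan is to unwind the definition of $\mathcal{N}(f,\varepsilon)$ as the covering number of the essential range of $f$, and then to translate in both directions between a finite cover of that range and an $\varepsilon$-approximation by a function with at most $k$ values. Throughout I write $R(f)$ for the essential range of $f$, that is, the set of $y\in\RR$ with $\mu(f^{-1}(B(y,\delta)))>0$ for every $\delta>0$, so that $\mathcal{N}(f,\varepsilon)$ is the least number of closed balls of radius $\varepsilon$ covering $R(f)$. The one standard fact I will invoke is that $f(x)\in R(f)$ for $\mu$-almost every $x$: indeed the complement of $R(f)$ is open, hence a countable union of balls each of which has null preimage, so its preimage is null.

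For the implication (ii)$\Rightarrow$(i), I would fix $\varepsilon>0$ and set $k=\sup_{f\in\mathscr{A}}\mathcal{N}(f,\varepsilon)$, finite by hypothesis. Given $f\in\mathscr{A}$, cover $R(f)$ by at most $k$ closed balls $B(c_1,\varepsilon),\dots,B(c_k,\varepsilon)$ and disjointify them by setting $A_j=f^{-1}(B(c_j,\varepsilon))\setminus\bigcup_{i<j}f^{-1}(B(c_i,\varepsilon))$. These are measurable and, because $f\in R(f)$ almost everywhere, they partition $\Omega$ up to a null set. The function $h=\sum_{j=1}^k c_j\,\ind_{A_j}$ then lies in $\mathscr{G}_{\infty,k}$ and satisfies $|f(x)-c_j|\le\varepsilon$ on $A_j$, so $\|f-h\|_\infty\le\varepsilon$. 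Hence $N_{\infty,\varepsilon}(\mathscr{A})\le k<\infty$ for every $\varepsilon$, and $\mathscr{A}$ is UA.

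For the converse (i)$\Rightarrow$(ii), fix $\varepsilon>0$ and take $k=N_{\infty,\varepsilon}(\mathscr{A})<\infty$. For each $f\in\mathscr{A}$ choose $h\in\mathscr{G}_{\infty,k}$, taking values $b_1,\dots,b_k$, with $\|f-h\|_\infty\le\varepsilon$. If $y\in R(f)$, then for every $\delta>0$ the set $\{|f-y|<\delta\}\cap\{|f-h|\le\varepsilon\}$ has positive measure, hence is nonempty, so some value $b_i$ satisfies $|b_i-y|\le\varepsilon+\delta$; letting $\delta\to0$ gives $\mathrm{dist}(y,\{b_1,\dots,b_k\})\le\varepsilon$. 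Thus the $k$ closed balls $B(b_i,\varepsilon)$ cover $R(f)$, which means $\mathcal{N}(f,\varepsilon)\le k$, and taking the supremum over $f\in\mathscr{A}$ yields (ii).

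The argument is essentially elementary, and I expect the only delicate points to be the ``up to measure zero'' bookkeeping and the ball conventions. The a.e.\ membership $f\in R(f)$ is exactly what makes the sets $A_j$ an honest partition rather than merely a cover of a full-measure set, and it is the one place where the structure of $R(f)$ (rather than the literal range of $f$) is used. The remaining subtlety is matching open versus closed balls and strict versus non-strict inequalities in the definition of $\mathcal{N}(f,\varepsilon)$; this is harmless since covering numbers are non-increasing in the radius, so an approximation at scale $\varepsilon$ controls $\mathcal{N}(f,\varepsilon')$ for every $\varepsilon'\ge\varepsilon$, and one may absorb any mismatch by an arbitrarily small enlargement of $\varepsilon$. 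I anticipate no deeper obstacle than pinning down these conventions so that the two inequalities line up without an $\varepsilon$-shift.
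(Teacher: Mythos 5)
Your argument is correct, and it reaches the conclusion by a cleaner route than the paper's. The key device is your identification of $\mathcal{N}(f,\varepsilon)$ with the covering number of the essential range $R(f)$; granting that, both implications become direct translations between covers and simple functions, and you in fact obtain the sharp equality $N_{\infty,\varepsilon}(\mathscr{A})=\sup_{f\in\mathscr{A}}\mathcal{N}(f,\varepsilon)$ with no loss. The paper instead works with literal ranges of representatives: for (i)$\Rightarrow$(ii) it argues by contraposition, first showing that in a minimal cover every ball has preimage of positive measure, then extracting a maximal $\varepsilon$-separated subfamily of cardinality at least $m_\varepsilon/5$ and counting ``unmarked'' intervals to force $\|f-g\|_\infty>\varepsilon$ (which only gives $N_{\infty,\varepsilon}>m_\varepsilon/10-1$, so the sharp equality has to be recovered separately by an argument close to yours); and for (ii)$\Rightarrow$(i) it must disjointify overlapping intervals by hand to produce an honest partition of $\Omega$. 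Your use of $R(f)$ absorbs all of that bookkeeping, at the cost of one step you assert but do not fully prove: the inequality $\mathcal{N}(f,\varepsilon)\le\mathcal{N}(R(f),\varepsilon)$, needed when you conclude ``which means $\mathcal{N}(f,\varepsilon)\le k$'' in (i)$\Rightarrow$(ii), requires exhibiting a representative $g=f$ a.e.\ whose literal range lies in the union of the $k$ balls. This follows in one line from the fact you did prove (that $f(x)\in R(f)$ for a.e.\ $x$): redefine $f$ to equal $b_1$ on the null set where $f(x)\notin R(f)$. Similarly, in (ii)$\Rightarrow$(i) the sets $A_j$ should be enlarged by the null remainder so that they genuinely partition $\Omega$ as required by the definition of $\mathscr{G}_{\infty,k}$; this costs nothing in $\|\cdot\|_\infty$. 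With these two sentences added, the proof is complete.
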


\begin{theorem}
Let $(\Omega,\F,\mu)$ be a measure space, $p\in[1,\infty)$ and let $\mathscr{A}\subset L^p(\Omega,\F,\mu)$. Then, the following are equivalent
\begin{itemize}
\item[(i)] $\mathscr{A}$ is UA in $L^p(\Omega,\F,\mu)$;
\item[(ii)] $\lim\limits_{k\to \infty} \sup_{f\in \mathscr{A}} \V_{p,k}(f) =0$.
\end{itemize}
\end{theorem}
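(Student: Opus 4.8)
The plan is to deduce the theorem directly from Proposition \ref{variation}, which already compares the $p$-variation $\V_{p,k}(f)$ with the best-approximation distance up to a factor of $2$. Write $d_k(f):=\inf\{\|f-g\|_p:\ g\in\mathscr{G}_{p,k}\}=d(f,\mathscr{G}_{p,k})$. The first step is to record the reformulation of uniform approximability noted right after Definition \ref{def_UA}, namely that $\mathscr{A}$ is UA if and only if
\[
\lim_{k\to\infty}\ \sup_{f\in\mathscr{A}} d_k(f)=0.
\]
Thus both (i) and (ii) assert that a non-negative quantity indexed by $k$, whose supremum is taken over $\mathscr{A}$, tends to $0$; the whole task is to compare $\sup_{f\in\mathscr{A}} d_k(f)$ with $\sup_{f\in\mathscr{A}}\V_{p,k}(f)$.

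The key step is to take suprema in the pointwise comparison of Proposition \ref{variation}. That estimate holds for each fixed $f\in L^p(\Omega,\F,\mu)$ and each fixed $k$, and says that $d_k(f)$ and $\V_{p,k}(f)$ differ by at most a multiplicative factor of $2$; schematically, there are constants $0<a\le b$ with $b/a\le 2$ such that $a\,\V_{p,k}(f)\le d_k(f)\le b\,\V_{p,k}(f)$. Since passing to the supremum over $f\in\mathscr{A}$ preserves inequalities, these bounds upgrade to
\[
a\,\sup_{f\in\mathscr{A}}\V_{p,k}(f)\ \le\ \sup_{f\in\mathscr{A}} d_k(f)\ \le\ b\,\sup_{f\in\mathscr{A}}\V_{p,k}(f),
\]
understood in $[0,\infty]$. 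Letting $k\to\infty$, the outer terms tend to $0$ exactly when the inner one does, which is precisely the equivalence of (i) and (ii).

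I do not anticipate a genuine obstacle here: once Proposition \ref{variation} is in hand, the argument is a pure sandwich-and-limit step with no new analytic input. The only point requiring slightly more care arises if Proposition \ref{variation} pairs $\V_{p,k}(f)$ with $d_{k'}(f)$ for an index $k'$ obtained from $k$ by a bounded shift rather than $k'=k$. In that case I would also use that both sequences are non-increasing in $k$ — for $d_k$ this follows from the inclusion $\mathscr{G}_{p,k}\subset\mathscr{G}_{p,k+1}$, and $\V_{p,k}$ enjoys the analogous monotonicity — so that vanishing of $\sup_{f\in\mathscr{A}}d_{k'}(f)$ along the shifted indices forces vanishing of $\sup_{f\in\mathscr{A}}d_k(f)$ as $k\to\infty$. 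Either way the conclusion is unchanged.
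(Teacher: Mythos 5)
Your proposal is correct and is essentially the paper's own argument: the proof there consists precisely of taking suprema over $\mathscr{A}$ in the two-sided bound $\mathscr{D}_{p,k+1}(f)\le \V_{p,k}(f)\le 2\,\mathscr{D}_{p,k}(f)$ of Proposition \ref{variation} and letting $k\to\infty$. Your caveat about the index shift is exactly the right one, since the lower bound in that proposition does involve $k+1$ rather than $k$, and the monotonicity of $\mathscr{D}_{p,k}$ in $k$ disposes of it as you indicate.
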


Then we investigate when the unit ball of $L^p(\Omega,\F,\mu)$ is UA. If $1\leq p<\infty$, this happens, as one can expect, if and only if $L^p(\Omega,\F,\mu)$ 
is finite dimensional (see Theorem \ref{ballUA}).  We conclude this section by establishing some stability properties of the class of UA sets. 
In particular, a nice use of the Rademacher type allows us to prove that if $\mathscr A$ is a bounded UA set in $L^p(\Omega,\F,\mu)$ 
for $p\in(1,\infty)$ then its closed convex hull also is UA (see Theorem \ref{convex_hull}). For more information about Rademacher type and 
cotype, we refer the reader  to \cite{AlbiacKalton} (chapter 6).\\

In what follows all the measures considered are assumed non trivial, that is, different from the $0$ measure, unless it is explicitly stated.
We believe that our notation is quite standard. For example, the closure of a set $A$ is denoted by $\overline{A}$ 
and the distance between two subsets $A$ and $B$ in a metric space is denoted by $d(A,B)$. The complement of a set $A$ is denoted by $A^c$. 
In some of the results we will need to consider diffuse and atomic measures. For that reason we fix some notations at this respect. 
We recall that an \textit{atom} in a measure space $(\Omega,\F,\mu)$ is a measurable set
$A$ that satisfies: $\mu(A)>0$ and if $B\subset A$ is a measurable set such that $\mu(B)<\mu(A)$ then $\mu(B)=0$. Notice 
that if $A_1, A_2$ are two atoms with finite measure, then either $\mu(A_1\cap A_2)=0$ or they differ on a set of measure $0$, that is, 
$\mu(A_1\Delta A_2)=0$ (where $\Delta$ is the symmetric difference). A measurable space is said to be \textit{atomic} if every measurable 
set of positive measure contains an atom. An atomic space
is said to have a finite number of atoms of finite measure, up to measure $0$, if there exists 
a finite collection (eventually empty) $\mathcal{A}$ of atoms of finite measure such that for any atom $B$ either $\mu(B)=\infty$ or 
there exists $A \in \mathcal{A}$ such that $\mu(A\Delta B)=0$. 
A \textit{diffuse} measure, is a measure that has no atoms. Notice that the measure $\mu\equiv 0$ 
is by definition diffuse, and we refer to this case as the trivial one. More information about measure theory can be found in \cite{Bogachev}.

\section{Minimizing the distance to the sets $\mathscr{G}_{p,k}$}
\label{sec:minimizing}

The main objective of this section is to prove that $\mathscr{G}_{p,k}$ proximinal, i.e. given some $f\in L^p(\Omega,\F,\mu)$, the distance from $f$ 
to $\mathscr{G}_{p,k}$ is reached at some function $g\in\mathscr{G}_{p,k}$ (see Theorem \ref{mainresult}). We denote by
$$
\mathscr{D}_{p,k}(f)=\inf\{\|f-h\|_p:\ h\in \mathscr{G}_{p,k}\}.
$$ for all $p\in[1,\infty]$, that is the distance between $f$ and $\mathscr{G}_{p,k}$. A function $g\in P_{\mathscr{G}_{p,k}}$ 
will be called a \textit{minimizer}. As we mentioned in the introduction, 
the classical results of optimization do not apply in this case since $\mathscr{G}_{p,k}$ is not convex nor compact. Even in the 
reflexive case (that is $1<p<\infty$), it is not clear if the problem admits a solution. However, if $1<p<\infty$ and $\mathscr{G}_{p,k}$ 
is weakly closed, it is easy to see that there exists a minimizer. In fact, let $(g_n)_n\subset\mathscr{G}_{p,k}$ such that 
$\|g_n-f\|\to \mathscr{D}_{p,k}(f)$. In particular, $(g_n)_n$ is bounded and then admits a subsequence $(g_{n'})_{n'}$ 
that weakly converges to some $g\in\mathscr{G}_{p,k}$. Since the norm is weakly lower semicontinuous, we obtain that 
$$
\mathscr{D}_{p,k}(f)\leq\|f-g\|_p\leq\lim_{n'}\|f-g_{n'}\|_p=\mathscr{D}_{p,k}(f),
$$ 
implying that $\mathscr{D}_{p,k}(f)=\|f-g\|_p$. Unfortunately, as the following discussion will show, $\mathscr{G}_{p,k}$ 
is not weakly closed in general, a fact that depends strongly on the measure space. On the one hand, in the case of the $\ell_p$ 
spaces for $1\le p<\infty$, every $\mathscr{G}_{p,k}$ is closed
under the weak topology. This follows directly from the fact that if $(f_n)_n\subset \mathscr{G}_{p,k}$ converges to $f$ weakly in $\ell_p$, 
then $(f_n)_n$ converges pointwise to $f$. From this fact it follows that $f(\NN)$ is a finite set with cardinality at most $k$ and therefore 
$f\in \mathscr{G}_{p,k}$. On the other extreme we have the following result:

\begin{proposition} Consider $([0,1],\L,dx)$ the Lebesgue measure and let $p\in[1,\infty)$. Then $\mathscr{G}_{p,k}$ 
is weakly dense in $L^p([0,1],\L,dx)$ for all $k\ge 2$.
\end{proposition}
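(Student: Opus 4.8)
The plan is to reduce everything to the case $k=2$ and then to a single explicit oscillation construction. Since $\mathscr{G}_{p,2}\subseteq\mathscr{G}_{p,k}$ for every $k\ge 2$, it suffices to prove that $\mathscr{G}_{p,2}$ alone is weakly dense. Let $W$ denote the weak closure of $\mathscr{G}_{p,2}$. Because the topological closure operator is idempotent and monotone, $S\subseteq W$ forces $\overline{S}^{\,w}\subseteq\overline{W}^{\,w}=W$ for any set $S$. Now the step functions (finite $\RR$-linear combinations of indicators of intervals) are norm dense in $L^p([0,1],\L,dx)$ for $p<\infty$, so their strong, hence weak, closure is all of $L^p$. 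Consequently the whole statement follows once I exhibit every step function as an element of $W$, i.e. as a weak limit of two-valued functions.

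So fix a step function $s=\sum_{j=1}^m c_j\ind_{I_j}$, where $\{I_j\}$ is a partition of $[0,1]$ into intervals and $c_j\in\RR$. Choose any $M>\max_j|c_j|$ and set $\theta_j=\tfrac12\bigl(1+c_j/M\bigr)\in(0,1)$; the point is that a function taking the value $+M$ on a fraction $\theta_j$ of a set and $-M$ on the rest has average exactly $c_j$ there. Concretely, I subdivide each $I_j$ into $N$ consecutive subintervals of equal length and let $g_N$ equal $+M$ on the left $\theta_j$-portion of each such subinterval and $-M$ on the complementary part. Then $g_N$ takes only the two values $\pm M$, so $g_N\in\mathscr{G}_{p,2}$, and $|g_N|\equiv M$ gives the uniform bound $\|g_N\|_p=M$ for all $N$.

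It then remains to check that $g_N\rightharpoonup s$ weakly in $L^p$ as $N\to\infty$. By construction $\int_{I_j}g_N\,dx=c_j|I_j|=\int_{I_j}s\,dx$ exactly, and for an arbitrary interval $I$ the quantities $\int_I g_N\,dx$ and $\int_I s\,dx$ already agree on every subinterval fully contained in $I$, so they can differ only through the at most two boundary subintervals that $I$ cuts. This yields $\bigl|\int_I g_N\,dx-\int_I s\,dx\bigr|\le 2M\max_j|I_j|/N\to 0$, hence $\int_I g_N\,dx\to\int_I s\,dx$ for every interval $I$, and therefore $\int g_N\psi\to\int s\psi$ for every step function $\psi$. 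For $1<p<\infty$ the step functions are dense in $L^q$ and $(g_N)_N$ is bounded in $L^p$, so this testing extends to all $\psi\in L^q$ and gives weak convergence, finishing that range of $p$.

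The main obstacle is the case $p=1$, where $L^1$ is not reflexive and a uniform $L^1$-bound together with convergence on a dense subspace of $L^\infty$ does not by itself force weak convergence. Here I would exploit that the $g_N$ are uniformly bounded in $L^\infty$, hence uniformly integrable; by the Dunford--Pettis theorem $(g_N)_N$ is then relatively weakly compact in $L^1$. Since the convergence $\int_I g_N\,dx\to\int_I s\,dx$ on all intervals determines the weak limit uniquely (measures agreeing on all intervals coincide), every weakly convergent subsequence must converge to $s$, and relative weak compactness upgrades this to weak convergence of the full sequence to $s$ in $L^1$. This identifies $s$ as a weak limit of elements of $\mathscr{G}_{p,2}$ in every case $p\in[1,\infty)$, completing the argument.
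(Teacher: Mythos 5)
Your proof is correct, and it takes a genuinely different route from the paper's. The paper works with base-$r$ digit functions $\zeta^r_n$ on $[0,1]$: it shows, via the measure-preserving digit-flip maps, that indicators of digit level sets intersected with an arbitrary measurable set $A$ converge weakly to $\tfrac{m}{r}\ind_A$, assembles these into rational-coefficient simple functions over arbitrary measurable partitions, and then reaches general simple functions by a closure argument plus an affine transformation $f_n=D\ind_{F_n}-C$. You instead first reduce to interval-based step functions by norm density (norm closure being contained in weak closure), which lets you replace the digit machinery by a single explicit $\pm M$ oscillation on $N$ equal subintervals whose local proportions $\theta_j$ encode the target values $c_j$ directly, with no detour through rational coefficients. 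What the paper's approach buys is the stronger intermediate fact that $\tfrac{m}{r}\ind_A$ is a weak limit of indicators for \emph{every} measurable $A$; what yours buys is brevity, an explicit uniform $L^\infty$ bound $M$ on the approximants, and a cleaner treatment of $p=1$, where you correctly note that boundedness in $L^1$ plus convergence against a dense subspace of $L^\infty$ is not enough and invoke Dunford--Pettis together with uniqueness of subsequential limits. (A small shortcut there: since the $g_N$ are uniformly bounded, weak convergence in $L^2$ already gives $\int g_N\psi\to\int s\psi$ for all $\psi\in L^\infty\subset L^2$, which is weak $L^1$ convergence; but your argument is equally valid. The constant in your boundary estimate should be $4M\max_j|I_j|/N$ rather than $2M\max_j|I_j|/N$, which is immaterial.)
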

\begin{proof} It is enough to prove the case $k=2$. Consider an integer $r\ge 2$. Every $x\in [0,1)$ has a unique expansion
$$
x=\sum\limits_{n=1}^\infty \zeta^r_n(x) r^{-n}.
$$
where $\zeta^r_n(x)\in \{0,...,r-1\}$ and $(\zeta^r_n(x))_n$ is not eventually constant $r-1$. For $x=1$, we define $\zeta^r_n(x)=r-1$ for all $n\geq 1$.

Let us prove that for every $A\in \L$, the sequence $(\ind_{A\cap\{\zeta^2_n=1\}})_n$ converges weakly to the function $f=\frac12\ind_A$. 
Indeed, assume first that $A=[0,1]$. For $n\ge 1$, let $\Psi_n:[0,1]\to [0,1]$ be the bi-measurable and measure preserving 
transformation which flips the $n$-th binary digit. Then
for all continuous functions $g:[0,1]\to \RR$ it holds
$$
\int_{\{\zeta^2_n=1\}} g(x) \ dx=\int_{\{\zeta^2_n=0\}} g(x)\ dx+R_n,
$$
where $R_n=\int_{\{\zeta^2_n=1\}} g(x)-g(\Psi_n(x)) \ dx$. The continuity of $g$, allow us to prove that $R_n$ 
converges to zero. This shows $\ind_{\{\zeta^2_n=1\}}$ converges weakly to $\frac12 \ind_{[0,1]}$. 
Thus, for all $h\in L^q$, where $q$ is the conjugated index of $p$, and all $A\in \L$ we have
$$
\lim\limits_{n\to \infty} \int \ind_{\{\zeta^2_n=1\}}(x) \ind_A(x) h(x) \ dx=\frac12\int \ind_A(x) h(x) \ dx,
$$
showing that $(\ind_{A\cap\{\zeta^2_n=1\}})_n$ converges weakly to $\frac12 \ind_A$.

In a similar way, it is shown that for all $A\in\L$, any integer number $r\ge 2$, any $m\in\{1,...,r\}$ and all $0\le t_1<t_2...<t_m\le r-1$, the sequence
$$
f_n=\ind_{A\cap \cup_{j=1}^m \{\zeta^r_n=t_j\}}=\sum\limits_{j=1}^m \ind_{A\cap \{\zeta^r_n=t_j\}}\in \mathscr{G}_{p,2},
$$
converges weakly to $\frac{m}{r}\ind_A$. 

Now, for any $\ell\ge 1$, any partition $\{A_j\}_{1\leq j\leq\ell}$ of measurable sets,   
any collection $\{r_j\}_{1\leq j\leq\ell}$ of integer numbers greater or equal than 2, any collection $\{m_j\}_{1\leq j\leq\ell}$ such that 
$m_j\in\{1,...,r_j\}$ and any collection of integer numbers $\{t_{j,i}\ :\ 1\leq i\leq m_j,\ 1\leq j\leq\ell\}$ such that
$0\le t_{j,1}<...<t_{j,m_j}\le r_j-1$, we obtain that the sequence
$$
f_n=\sum_{j=1}^\ell \ind_{A_j\cap \cup_{i=1}^{m_j} \{\zeta^{r_j}_n=t_{j,i}\}}=\sum_{j=1}^\ell \sum_{i=1}^{m_j} \ind_{A_j\cap \{\zeta^{r_j}_n=t_{j,i}\}},
$$
converges weakly to $\sum\limits_{j=1}^\ell\frac{m_j}{r_j} \ind_{A_j}$. We notice that $f_n=\ind_{B_n}$, where 
$$
B_n=\bigcup_{j=1}^\ell \bigcup_{i=1}^{m_j} A_j \cap \{\zeta^{r_j}_n=t_{j,i}\},
$$
so $f_n\in \mathscr{G}_{p,2}$. This shows that the weak closure of $\mathscr{G}_{p,2}$ contains all the simple functions of the form
$
f=\sum\limits_{j=1}^\ell \alpha_j \ind_{A_j},
$
where $\ell\geq 1$, $\{A_j\}_{1\leq j\leq \ell}$ is any finite measurable partition and $ \alpha_j\in[0,1]$ for all $j\in\{1,...,\ell\}$. 
Moreover, any such simple function is the weak limit of a sequence $(\ind_{F_n})_n$ for some sequence $(F_n)_n$ of
measurable sets. From here it follows that 
the weak closure of $\mathscr{G}_{p,2}$ contains all the simple functions. Indeed, consider a simple function
$
f=\sum\limits_{j=1}^\ell a_j \ind_{A_j},
$
with $\ell\geq 1$ and $a_j\in \RR$ for all $j\in\{1,...,\ell\}$. By adding a large constant $C$, we have $f+C=\sum\limits_{j=1}^\ell b_j \ind_{A_j}$, where 
$b_j=a_j+C>0$ for all $j\in\{1,...,\ell\}$. Letting $D=\max_{1\leq j\leq l}b_j$, we deduce that $\frac{1}{D}(f+C)$ is the weak limit of a sequence
$(\ind_{F_n})_n$ for some sequence of measurable sets $(F_n)_n$. Then
$$
f_n:=D\ind_{F_n}-C=(D-C)\ind_{F_n}-C\ind_{F_n^c}\in \mathscr{G}_{p,2},
$$
converges weakly to $f$. The density of the simple functions in $L^p$, in the strong topology, shows the result.
\end{proof}

The previous result implies obviously that $\mathscr{G}_{p,k}$ is not weakly closed in general, and the usual 
optimization methods do not work in this context, we have to find a minimizer by a more constructive way.\\

\begin{definition}\label{eq:M_p} 
In what follows, for a measurable set $A$ of positive and finite measure, we consider $\M_p(f,A)$
as one of the \textit{$p$-th means} of $f$ on $A$ where $p\in[1,\infty)$. The function 
$$
a\mapsto \int_{A} |f(x)-a|^p \, d\mu(x)
$$ 
is convex, nonnegative and finite on $\RR$, which converges to $\infty$ as $a\to \pm \infty$. 
Therefore, this function has at least one global minimum. For $p=1$, the set of minima is a bounded interval 
with extremes $a^*$ and $b^*$ and it is customary to take, the median, as 
$$
\M_1(f,A)=\frac{a^*+b^*}{2}.
$$
For $p>1$ the minimum is unique due to strict convexity and we denote it by $\M_p(f,A)$. For example, for $p=2$
$$
\M_2(f,A)=\frac{1}{\mu(A)} \int_A f(x) \, d\mu(x),
$$
is the mean of $f$ over the set $A$. If a set has measure $0$, we simply put $\M_p(f,A)=0$.
\end{definition}

The next concept will play an important role in what follows.

\begin{definition}
\label{def:special}
Assume $f\in L^p(\Omega,\F,\mu)$, $p\in [1,\infty)$. A function $g\in \mathscr{G}_{p,k}$ 
$$
g=\sum\limits_{i=1}^{q} a_i \ind_{C_i},
$$
with $1\le q\le k$, is said in \textit{$f$-special form} if there exist $-\infty\le r_1<...<r_k<r_{k+1}\le \infty$ such that
\begin{itemize}
    \item $C_i=f^{-1}([r_{i},r_{i+1}))$ for all $i\in\{1,...,q-1\}$, $C_q=f^{-1}([r_{q},r_{q+1}])$ and $\{C_i\}_{1\leq i\leq q}$ is a partition of $\Omega$;
    \item $-\infty<a_1<...<a_q<\infty$;
    \item for all $i\in\{1,...,q\}$ such that $\mu(C_i)<\infty$, it holds $a_i$ is a $p$-th mean of $f$ on $C_i$.
\end{itemize}
\end{definition}

Suppose that $g=\sum\limits_{i=1}^{q} a_i \ind_{C_i}$ is in $f$-special form. Note that if $\mu$ is an infinite measure there exists a unique $1\le s\le q$ such that $a_s=0$ and $\mu(C_i)<\infty$ 
for all $i\neq s$. We also have that $a_i=\M_p(f,C_i)$ for all $i\in\{1,...,q\}$ if $p>1$. Moreover notice that $g=h\circ f$, where $h=\sum_{i=1}^{q-1} a_i \ind_{[r_i,r_{i+1})}+a_q \ind{[r_q,r_{q+1}]}$ is a Borel function
and $g$ is $f$-measurable, that is, $g$ is measurable with respect $\sigma(f)=f^{-1}(\B)$, where $\B$ is the Borel $\sigma$-field in $\RR$.

\subsection{The case of a finite measure, $p\in[1,\infty)$}
\label{sec:finite measure}

If the measure if finite, we start by proving that there exists an approximation sequence which is uniformly bounded:

\begin{lemma}\label{unif_bounded}
Let $(\Omega,\F,\mu)$ be a finite measure space and $p\in [1,\infty)$. Let $f\in L^p(\Omega,\F,\mu)$ and $k\ge 1$. 
Then there exists a uniformly bounded sequence $(g_n)_n\subset \mathscr{G}_{p,k}$ such that $$\|f-g_n\|_p\to \mathscr{D}_{p,k}(f).$$
\end{lemma}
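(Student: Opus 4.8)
The plan is to start from an arbitrary minimizing sequence and clean it up in two stages until it becomes uniformly bounded, losing nothing in the limit. First I would fix a minimizing sequence $(h_n)_n\subset\mathscr{G}_{p,k}$, writing $h_n=\sum_{i=1}^{q_n}a_i^{(n)}\ind_{A_i^{(n)}}$ with $q_n\le k$ and $\{A_i^{(n)}\}_i$ a partition of $\Omega$. The first normalization is to replace, on each piece of positive measure, the coefficient $a_i^{(n)}$ by a $p$-th mean $\M_p(f,A_i^{(n)})$; by the very definition of $\M_p$ (Definition \ref{eq:M_p}) this only decreases each local error $\int_{A_i^{(n)}}|f-a_i^{(n)}|^p$, so the modified functions still lie in $\mathscr{G}_{p,k}$ and still form a minimizing sequence. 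The payoff is a clean a priori bound on the coefficients: since $\int_A|f-\M_p(f,A)|^p\le\int_A|f|^p$, Minkowski's inequality on $L^p(A)$ yields $|\M_p(f,A)|\,\mu(A)^{1/p}\le 2\|f\|_{L^p(A)}\le 2\|f\|_p$, hence $\mu(A_i^{(n)})\le\big(2\|f\|_p/|a_i^{(n)}|\big)^p$ whenever $a_i^{(n)}\neq 0$.

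If this sequence were already uniformly bounded we would be done, so the crux is to control the pieces on which the coefficients blow up. Here the naive idea of truncating every $h_n$ at a fixed level $L$ fails when $f$ is unbounded, because any $g$ with $\|g\|_\infty\le L$ satisfies $\|f-g\|_p\ge\|f-T_Lf\|_p>0$, an irreducible error one could only kill by letting $L\to\infty$. Instead I would pass to a subsequence along which the combinatorial structure stabilizes: as $q_n\le k$, assume $q_n\equiv q$, and after a further extraction assume each coefficient sequence $(a_i^{(n)})_n$ converges in $[-\infty,+\infty]$. Let $J=\{i:\ |a_i^{(n)}|\to\infty\}$ be the set of ``runaway'' slots and $B_n=\bigcup_{i\in J}A_i^{(n)}$. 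The a priori bound forces $\mu(A_i^{(n)})\to 0$ for each $i\in J$, so $\mu(B_n)\to 0$, while the coefficients with $i\notin J$ stay bounded by some constant $M$.

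The final step is to surgically remove the runaway part: define $g_n$ to equal $h_n$ on $B_n^{\,c}$ and $0$ on $B_n$. This is still a member of $\mathscr{G}_{p,k}$ (in the blow-up case $|J|\ge 1$, so merging the runaway pieces into a single zero level does not increase the number of values) and it is uniformly bounded by $M$. It remains minimizing because $g_n$ and $h_n$ differ only on $B_n$, where the error changes by $\int_{B_n}|f|^p-\sum_{i\in J}\int_{A_i^{(n)}}|f-a_i^{(n)}|^p$, which is bounded in absolute value by $\int_{B_n}|f|^p$ (both terms lie in $[0,\int_{B_n}|f|^p]$, using $\int_A|f-\M_p(f,A)|^p\le\int_A|f|^p$ on the removed pieces). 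Since $\mu(B_n)\to 0$ and $|f|^p\in L^1(\mu)$, absolute continuity of the integral gives $\int_{B_n}|f|^p\to 0$, whence $\|f-g_n\|_p\to\mathscr{D}_{p,k}(f)$.

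I expect the main obstacle to be precisely the unbounded-$f$ situation just described: one must resist fixed-level truncation and instead exploit that runaway coefficients force vanishing measure (via the $p$-mean bound) together with the absolute continuity of $\int|f|^p\,d\mu$ — which is exactly where the finiteness of $\mu$ and the hypothesis $f\in L^p$ are genuinely used. By contrast, the bounded case $f\in L^\infty$ is comparatively trivial, since projecting every coefficient onto the closed interval determined by the essential infimum and essential supremum of $f$ is a $1$-Lipschitz operation fixing the essential range of $f$, and therefore produces a uniformly bounded minimizing sequence directly.
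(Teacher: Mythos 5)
Your argument is correct and follows essentially the same route as the paper: replace the coefficients by $p$-th means, use the bound $|\M_p(f,A)|\,\mu(A)^{1/p}\le 2\|f\|_p$, discard a bad set $B_n$ of vanishing measure by setting the function to $0$ there, and invoke absolute continuity of $A\mapsto\int_A|f|^p\,d\mu$ to see that nothing is lost in the limit. The only (immaterial) difference is that you extract convergence of the coefficient vectors in $[-\infty,+\infty]^q$ and discard the slots whose coefficients blow up, whereas the paper extracts convergence of the measure vector in the simplex and discards the slots whose measures vanish; the same key inequality links the two selections.
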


\begin{proof}
Let $(h_n)_n\in \mathscr{G}_{p,k}$ be a sequence such that $\|f-h_n\|_p\to \mathscr{D}_{p,k}(f)$. Assume that 
$h_n=\sum_{i=1}^{m(n)} c_{i,n} \ind_{A_{i,n}}$, where $(c_{i,n})_{1\leq i\leq m(n)}$ are all different, $\{A_{i,n}\}_{1\leq i\leq m(n)}$
is a measurable partition with sets of positive measure and $m(n)\le k$. We assume that $m(n)=m$ is constant by passing to a subsequence 
if necessary. We modify this
approximating sequence by considering $a_{i,n}=\M_p(f,A_{i,n})$ any of the $p$-th means of $f$ in $A_{i,n}$. By definition
of the $p$-th means we have, for all $i\in\{1,...,m\}$
$$
\int_{A_{i,n}} |f(x)-a_{i,n}| \ d\mu(x)\le \int_{A_{i,n}} |f(x)-c_{i,n}| \ d\mu(x),
$$
showing that $\tilde h_n=\sum_{i=1}^{m} a_{i,n} \ind_{A_{i,n}}\in \mathscr{G}_k$ is a minimizing sequence since
$$
\mathscr{D}_{p,k}(f)\le \|f-\tilde h_n\|_p\leq\|f- h_n\|\to \mathscr{D}_{p,k}(f)
$$
If $m<k$, we define $a_{i,n}=0$ and $A_{i,n}=\emptyset$ for $i\in\{m+1,...,k\}$. We assume 
that $\{A_{i,n}\}_{1\leq i\leq k}$ are ordered in decreasing order according to their measure
$$
\mu(A_{1,n})\ge \mu(A_{2,n})\ge ... \ge \mu(A_{k,n})\ge 0.
$$
In this way, the vector $v_n=(\mu(A_{1,n}),\mu(A_{2,n}), ...,\mu(A_{k,n}))$ belongs to the compact set in $\RR^k$
$$
\Delta=\left\{x\in \RR^k:\ x_1\ge x_2\ge ... \ge x_k\ge 0,\\ \sum_i x_i=\mu(\Omega)\right\}
$$
By passing to a subsequence if necessary, we can assume that $(v_n)_n$ converges to some
vector $v=(v_1,v_2,...,v_k)\in \Delta$. If $q$ is the largest index such that $v_q>0$
($q$ could be exactly $k$) then, we have $q\ge 1$ and $v_1\ge ... \ge v_{q}>0= v_{q+1}=...=v_k.$
We notice that $q\le m$. 
Now, define $B_n=\bigcup\limits_{i=q+1}^k A_{i,n}$ for all $n\in\mathbb N$, that we take as the empty set if $q=k$, so
$$
\lim\limits_{n\to \infty} \mu(B_n)=\lim\limits_{n\to \infty} \sum\limits_{i=q+1}^k \mu(A_{i,n})=0.
$$
On the other hand, for all $i\in\{1,...,q\}$ we have
$$
\lim\limits_{n\to \infty} \mu(A_{i,n})=v_i>0,
$$
and so, passing to a further subsequence we can assume there exists a finite constant $\Gamma$ such that for all $n$
and all $i\in\{1,...,q\}$ it holds
\begin{equation}
\label{eq:boundC}
\frac{1}{\mu(A_{i,n})}\le \Gamma
\end{equation}
The finite measure $\nu$ defined by
$$
\nu(A)=\int_A |f(x)|^p \ d\mu(x),
$$
is absolutely continuous with respect to $\mu$, which means that, for all $\rho>0$ there exists a
$\delta>0$ such that, for any measurable set $A$ if $\mu(A)\le \delta$ then 
$\nu(A)=\int_A |f(x)|^p \ d\mu(x)\le \rho$. This property
shows that
$$
\lim\limits_{n\to \infty} \int_{B_n} |f(x)|^p \ d\mu(x)=0.
$$
Now, we modify further the approximation sequence by defining
\begin{equation}
\label{eq:def_b_in}
b_{i,n}=\begin{cases}  \vspace{0.1cm} a_{i,n} &\hbox{for } i\in\{1,...,q\}\\
                        0       &\hbox{for } i\in\{q+1,...,k\}
        \end{cases},
\end{equation}
and define
\begin{equation}
\label{eq:newg_n}    
g_n=\sum\limits_{i=1}^k b_{i,n} \ind_{A_{i,n}}=\sum\limits_{i=1}^q \M_p(f,A_{i,n}) \ind_{A_{i,n}}
+ 0 \ind_{B_n}\in \mathscr{G}_{p,k}.
\end{equation}
We need to show that $(g_n)_n$ is a good approximation sequence and it is uniformly bounded. For the 
first claim notice that for $i\in\{q+1,...,k\}$, we have 
$$
\int_{A_{i,n}} |f(x)-\tilde h_n(x)|^p\ d\mu(x)=\int_{A_{i,n}} |f(x)-\M_p(f,A_{i,n})|^p\ d\mu(x)
\le \int_{A_{i,n}} |f(x)|^p\ d\mu(x),
$$
where we have used the optimality of $\M_p(f,A_{i,n})$ in the last inequality. This shows that
$$
\begin{array}{ll}
\mathscr{D}_{p,k}(f)^p &\hspace{-0.2cm}\le \|f-\tilde h_n\|_p^p=\sum\limits_{i=1}^{m} \int_{A_{i,n}} |f(x)-\M_p(f,A_{i,n})|^p\ d\mu(x)\\
&\hspace{-0.2cm}\le \sum\limits_{i=1}^q \int_{A_{i,n}} |f(x)-\M_p(f,A_{i,n})|^p\ d\mu(x)+
\int_{B_n} |f(x)|^p \ d\mu(x)=\|f-g_n\|_p^p\\
&\hspace{-0.2cm}\le \sum\limits_{i=1}^{m} \int_{A_{i,n}} |f(x)-\M_p(f,A_{i,n})|^p\ d\mu(x)+\int_{B_n} |f(x)|^p \ d\mu(x)\\
&\hspace{-0.2cm}\le \|f-\tilde h_n\|_p^p+\int_{B_n} |f(x)|^p \ d\mu(x)\vspace{0.1cm}\to \mathscr{D}_{p,k}(f)^p
\end{array}
$$
Now, we prove that $(g_n)_n$ is uniformly bounded. We notice that $g_n=0$ on $B_n$, so we must
study $g_n$ on $B_n^c$. For $i\in\{1,...,q\}$ and $x\in A_{i,n}$ we have $g_n(x)=\M_p(f,A_{i,n})$
and so
$$
\|\M_p(f,A_{i,n})\ind_{A_{i,n}}\|_p\le
\|(f-\M_p(f,A_{i,n}))\ind_{A_{i,n}}\|_p+\|f\ind_{A_{i,n}}\|_p
\le 2\| f \ind_{A_{i,n}}\|_p\le 2 \|f\|_p,
$$
where we have used again the optimality of $\M_p(f,A_{i,n})$.
This shows that
$$
|\M_p(f,A_{i,n})| \le 2 \frac{\|f\|_p}{\mu(A_{i,n})^{\frac{1}{p}}}\le 2\|f\|_p \Gamma^{\frac{1}{p}},
$$
where $\Gamma$ is the constant obtained in \eqref{eq:boundC}.
\end{proof}

The next result proves that $\mathscr{G}_{p,k}$ is proximinal in case of finite measure spaces. Remember that $P_K$ is the metric projection over $K$.

\begin{theorem}
\label{pro:minimum}
Let $(\Omega,\F,\mu)$ be a finite measure space, $p\in [1,\infty)$ and $k\geq 1$. Then $\mathscr{G}_{p,k}$ is proximinal.

Moreover, if $f\in L^p(\Omega,\F,\mu)$ and $g=\sum\limits_{i=1}^q b_i \ind_{A_i}\in P_{\mathscr{G}_{p,k}}(f)$ is a minimizer with 
$q\le k$, $-\infty<b_1<...<b_q<\infty$ and $\{A_i\}_{1\leq i\leq q}$ a partition of $\Omega$ with sets of positive measure, there exists a minimizer 
$\widetilde g\in P_{\mathscr{G}_{p,q}}(f)$ in $f$-special form:  
$$
\widetilde g=\sum\limits_{i=1}^{q} \M_p(f,f^{-1}(C_i))\ \ind_{f^{-1}(C_i)}
$$
where 
\begin{itemize}
    \item $r_1=-\infty, r_{q+1}=\infty$ and  $r_i=\frac{b_{i-1}+b_{i}}{2}$ for all $i\in\{2,...,q\}$;
    \item $C_i=f^{-1}([r_i,r_{i+1}))$ for all $i\in\{1,...,q-1\}$ and $C_q=f^{-1}([r_q,r_{q+1}])$;
    \item $b_i$ is a $p$-th mean of $f$ on $f^{-1}(C_i)$ for all $i\in\{1,...,q\}$ such that $\mu(f^{-1}(C_i))>0$.
\end{itemize}
If $q$ is the smallest among all minimizers, then $\mu(C_i)>0$ for all $i\in\{1,...,q\}$.
\end{theorem}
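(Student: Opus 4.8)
The plan is to establish the two assertions separately: first the bare proximinality (existence of a minimizer), and then the structural ``Moreover'' that upgrades an arbitrary minimizer to one in $f$-special form. The existence part cannot be deduced from the structural part, since the latter presupposes a minimizer; so I would prove existence first, by a compactness argument in the space of coefficients, and only afterwards perform the symbolic surgery that produces the special form.

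For existence, I would start from Lemma \ref{unif_bounded}, which supplies a uniformly bounded minimizing sequence $(g_n)_n\subset\mathscr{G}_{p,k}$, say with all coefficients bounded in absolute value by some $M<\infty$ and $\|f-g_n\|_p\to\mathscr{D}_{p,k}(f)$. The first reduction is to reassign the sets: for the sorted distinct values $b_{1,n}<\dots<b_{q_n,n}$ carried by $g_n$, replace the underlying partition by the nearest--value level sets determined by the midpoints $r_{i,n}=\tfrac12(b_{i-1,n}+b_{i,n})$, i.e. send $x$ to the value $b_{i,n}$ closest to $f(x)$. Since $g_n(x)$ already lies in $\{b_{1,n},\dots,b_{q_n,n}\}$, this pointwise nearest choice can only decrease $|f(x)-\,\cdot\,|$, so the modified functions $\hat g_n$ still satisfy $\|f-\hat g_n\|_p\le\|f-g_n\|_p$ and remain minimizing; crucially, each $\hat g_n$ is now completely determined by its value vector. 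Passing to a subsequence I may assume $q_n\equiv q$ and that the bounded value vectors converge, $b_{i,n}\to b_i$; I then define $\hat g=\sum_{i=1}^q b_i\,\ind_{C_i}$ with $C_i=f^{-1}([r_i,r_{i+1}))$, $r_i=\tfrac12(b_{i-1}+b_i)$ (and $r_1=-\infty$, $r_{q+1}=\infty$), collapsing any coincident limits so that at most $k$ distinct values remain, whence $\hat g\in\mathscr{G}_{p,k}$.

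The heart of the existence proof, and the step I expect to be the main obstacle, is showing $\|f-\hat g_n\|_p\to\|f-\hat g\|_p$, which then forces $\|f-\hat g\|_p=\mathscr{D}_{p,k}(f)$. Off the finite set of limiting thresholds, i.e. for a.e. $x$ with $f(x)\neq r_j$ for all $j$, one has $f(x)\in(r_j,r_{j+1})$ for a unique $j$, and since $r_{i,n}\to r_i$ the point $x$ eventually lands in the corresponding piece, giving $\hat g_n(x)\to b_j=\hat g(x)$. The delicate case is the boundary $\{f=r_j\}$, where $\hat g_n(x)$ may oscillate between $b_{j-1,n}$ and $b_{j,n}$ according to the side of $r_{j,n}$ on which $f(x)$ falls, and hence need not converge. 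The resolution is that $r_j$ is the \emph{midpoint} of $b_{j-1}$ and $b_j$, so the two competing costs agree in the limit, $|r_j-b_{j-1}|=|r_j-b_j|$; therefore $|f(x)-\hat g_n(x)|^p$ converges even there, to the common value $|f(x)-\hat g(x)|^p$. With the integrands converging a.e., the uniform bound $|f-\hat g_n|^p\le(|f|+M)^p\in L^1(\mu)$ (here the finiteness of $\mu$ and $f\in L^p$ are used) lets the Dominated Convergence Theorem conclude. Thus $\hat g$ is a minimizer and $\mathscr{G}_{p,k}$ is proximinal.

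For the ``Moreover'', let $g=\sum_{i=1}^q b_i\,\ind_{A_i}$ be a minimizer with $b_1<\dots<b_q$ and $\mu(A_i)>0$. Define the midpoint thresholds $r_i$, the level sets $C_i=f^{-1}([r_i,r_{i+1}))$ (with $C_q$ closed at the top), and $\widetilde g=\sum_i\M_p(f,C_i)\,\ind_{C_i}$. Two cost--nonincreasing moves give the claim: replacing the sets $A_i$ by the nearest--value sets $C_i$ while keeping the values $b_i$ yields $|f(x)-\sum_i b_i\ind_{C_i}(x)|=\min_i|f(x)-b_i|\le|f(x)-g(x)|$ pointwise, and then replacing each $b_i$ by the optimal constant $\M_p(f,C_i)$ on the fixed set $C_i$ lowers each local integral by the very definition of the $p$-th mean. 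Hence $\|f-\widetilde g\|_p\le\|f-g\|_p=\mathscr{D}_{p,k}(f)$, so $\widetilde g$ is again a minimizer, and since it uses at most $q$ values and $\mathscr{G}_{p,q}\subset\mathscr{G}_{p,k}$ it must realize $\mathscr{D}_{p,q}(f)$, i.e. $\widetilde g\in P_{\mathscr{G}_{p,q}}(f)$. That $\widetilde g$ is in $f$-special form reduces to the strict ordering of its values: on a piece of positive measure $f<r_{i+1}$ a.e. forces $\M_p(f,C_i)<r_{i+1}$, while $f\ge r_{i+1}$ on $C_{i+1}$ forces $\M_p(f,C_{i+1})\ge r_{i+1}$, so consecutive means strictly increase. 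Finally, for the minimal--$q$ statement, suppose some $\mu(C_i)=0$; then $A_i$ is carried (mod $\mu$) by $\{f=r_{i+1}\}$, where $b_i$ and $b_{i+1}$ are equidistant, so relabeling that mass with value $b_{i+1}$ leaves the cost unchanged while eliminating the value $b_i$, producing a minimizer with fewer than $q$ values and contradicting minimality. Hence every $C_i$ has positive measure, and after discarding any null pieces in the general case the strict ordering and the special form hold as stated.
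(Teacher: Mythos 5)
Your proof is correct, and it shares the paper's overall skeleton (a uniformly bounded minimizing sequence from Lemma \ref{unif_bounded}, convergence of the value vectors, and a candidate built from level sets of $f$ cut at the midpoints), but both halves are executed differently. For existence, the paper freezes the values at their limits on the \emph{original} sets $A_{i,n}$ and then, for each fixed $n$, integrates the pointwise inequality $|f(x)-z_j|\le|f(x)-b_i|$ over the double decomposition $f^{-1}(I_j)\cap A_{i,n}$, obtaining $\|f-g\|_p^p\le\|f-\tilde g_n\|_p^p$ with no limit theorem at all; you instead reassign the sets at each stage $n$ and pass to the limit by dominated convergence, which obliges you to treat the threshold sets $\{f=r_j\}$ separately --- your equidistance observation (that $r_j$ is the midpoint of $b_{j-1}$ and $b_j$, so the two competing costs agree in the limit) is exactly the right fix, and the dominating function $(|f|+M)^p$ is integrable precisely because $\mu$ is finite. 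For the ``Moreover'', your two cost-nonincreasing moves (reassign each point to its nearest value, then re-optimize the constant on each level set) are substantially shorter than the paper's surgery with the modified sets $\widetilde A_i$ and the boundary sets $f^{-1}(\{r_i\})$, and they do prove the statement as written; note that the bullet asserting that $b_i$ is a $p$-th mean of $f$ on $C_i$ follows from the forced equality in your second move, which you should state explicitly. What your shortcut does not yield, and the paper's longer argument does, is the stronger conclusion $\mu(A_i\Delta C_i)=0$ for the original minimizer; the paper uses that extra information later, in Corollary \ref{cor:all_specialform}, to show that when $F_f$ is continuous \emph{every} minimizer is already in $f$-special form. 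Your handling of the minimal-$q$ claim (a null $C_i$ forces $A_i$ to be carried by $\{f=r_{i+1}\}$ modulo $\mu$, where $b_i$ and $b_{i+1}$ are equidistant from $f$, so the value $b_i$ can be eliminated at no cost) is correct.
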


\begin{proof} 
By Lemma \ref{unif_bounded}, let $(g_n)_n\subset \mathscr{G}_{p,k}$ be a uniformly bounded sequence such that $\|f-g_n\|_p\to \mathscr{D}_{p,k}(f)$. Let $C>0$ 
such that $|g_n|<C$ for all $n\in\mathbb N$. We write $g_n=\sum\limits_{i=1}^k b_{i,n} \ind_{A_{i,n}}$ where $\{A_{i,n}\}_{1\leq i\leq k}$ is a 
partition of $\Omega$ and $-C\le b_{1,n}\le...\le b_{k,n}\le C$. The vector $u_n=(b_{1,n},...,b_{k,n})$ belongs to the compact set $[-C,C]^k$ and therefore, 
by taking a subsequence if necessary, we can assume that $(u_n)_n$ converges to some $u=(b_1,...,b_k)\in [-C,C]^k$ with $b_1\le ...\le b_k$. 
Some of the entries in $u$ can be equal, for
that we consider $z_1<...<z_l$ the distinct entries in $u$ where $1\le l \le k$.
We define
$r_1=-\infty, r_{l+1}=\infty$ and $r_j=\frac{z_{j-1}+z_j}{2}$ for $j\in\{2,...,l\}$. Consider the intervals $I_j=[r_j,r_{j+1})$ for
$j\in\{1,...,l-1\}$ and $I_l=[r_l,r_{l+1}]$. For $j\in\{1,...,l\}$, we also define $L_j=\{i\in\{1,...,k\} :\ b_i=z_j\}$, which is  a 
partition of $\{1,...,k\}$. For all $n\in\mathbb N$, consider the function 
$$
\tilde g_n=\sum_{i=1}^k b_i \ind_{A_{i,n}}.
$$
Then, we have 
$$
\begin{array}{ll}
\|f-\tilde g_{n}\|_p&\hspace{-0.2cm}\le\|f-g_{n}\|_p+\|g_{n}-\tilde g_{n}\|_p\le \|f-g_{n}\|_p+
\max_{1\leq i\leq k}|b_{i,n}-b_{i}| \mu(\Omega)^\frac{1}{p}\to \mathscr{D}_{p,k}(f),
\end{array}
$$
proving that $(\tilde g_{n})_{n}$ is also a minimizing sequence. Finally, our candidate for minimizer is the function $g=\sum_{j=1}^l z_j \ind_{f^{-1}(I_j)}\in\mathscr{G}_{p,k}$. 
For all $i\in\{1,...,k\}$, all $j\in\{1,...,l\}$ and all $n$, we
have
$$
\int_{f^{-1}(I_j)\cap A_{i,n}} |f(x)-z_j|^p \ d\mu(x)
\le \int_{f^{-1}(I_j)\cap A_{i,n}} |f(x)-b_i|^p \ d\mu(x).
$$
This is clear if $i\in L_j$ because in that case $z_j=b_i$. Now, if $i\in L_{j'}$ with $j'\neq j$, we have $b_i=z_{j'}$ and
for all $x\in f^{-1}(I_j)$ it holds $|f(x)-z_j|\le |f(x)-z_{j'}|=|f(x)-b_i|$. Now, summing over $i,j$ we get for all $n$ that
$$
\begin{array}{ll}
\mathscr{D}_{p,k}(f)^p\le \|f-g\|_p^p&\hspace{-0.2cm}=\sum\limits_{i,j} \int_{f^{-1}(I_j)\cap A_{i,n}} |f(x)-z_j|^p \ d\mu(x)\le
\sum\limits_{i,j} \int_{f^{-1}(I_j)\cap A_{i,n}} |f(x)-b_i|^p \ d\mu(x)\\
&\hspace{-0.2cm}\le \|f-\tilde g_{n}\|_p^p\to \mathscr{D}_{p,k}(f)^p,
\end{array}
$$
proving that $g\in P_{\mathscr{G}_{p,k}}(f)$.

Now, we prove the last part of the Theorem. 
Assume that $g=\sum_{i=1}^q b_i\ind_{A_i}\in P_{\mathscr{G}_{p,k}}(f)$ is a minimizer,  
with $b_1<...<b_q$, $\{A_i\}_{1\leq i\leq q}$ a 
partition of $\Omega$ where all the sets $A_i$ have positive measure and $q\le k$.
Let $r_1=-\infty, r_{q+1}=\infty, r_i=\frac{b_{i-1}+b_i}{2}$ for $i=\{2,...,q\}$ and 
$$
C_i=f^{-1}([r_i,r_{i+1}))\ \text{for}\ i\in\{1,...q-1\},\ C_q=f^{-1}([r_i,r_{i+1}]).
$$
For all $i\in\{1,...,q\}$, we modify the sets $A_i$ as
\begin{equation}
\label{eq:especial_minimizer}
\widetilde{A}_i=\left(A_i\cup f^{-1}(\{r_i\})\right)\setminus 
f^{-1}(\{r_{i+1}\}).
\end{equation}
Let us prove that $\mu(\widetilde{A}_1 \Delta C_1)=\mu(A_j\cap E_2)=0$ for all $j>2$ where $E_2=f^{-1}(\{r_2\})$. 
Define 
$$
g'=b_1\ind_{\widetilde{A}_1}+\sum_{i=3}^q b_i\ind_{A_i\setminus E_2}+b_2\ind_{A_2\cup E_2}\in \mathscr{G}_{p,k}
$$ 
and note that $\{\widetilde{A}_1,\{A_j\setminus E_2\}_{j>2}, A_2\cup E_2\}$ is a partition of $\Omega$.
Consider the following decomposition
$$
\begin{array}{l}
\|f-g\|_p^p=\sum\limits_j \int_{A_j} |f(x)-b_j|^p\ d\mu(x)=\int_{\widetilde{A}_1} |f(x)-b_1|^p\ d\mu(x)+
\sum\limits_{j> 2} \int_{A_j\setminus E_2} |f(x)-b_j|^p\ d\mu(x)\\
+\sum\limits_{j>2} \int_{A_j\cap E_2} |f(x)-b_j|^p\ d\mu(x)+
\int_{A_1\cap E_2} |f(x)-b_1|^p \ d\mu(x)+\int_{A_2} |f(x)-b_2|^p \ d\mu(x)
\\
\ge \int_{\widetilde{A}_1} |f(x)-b_1|^p\ d\mu(x)
+\sum\limits_{j>2} \int_{A_j\setminus E_2} |f(x)-b_j|^p\ d\mu(x)+\int_{A_2\cup E_2} |f(x)-b_2|^p \ d\mu(x)\\
+ (b_3-b_2)^p\sum\limits_{j>2}\mu(A_j\cap E_2)\\
=\|f-g'\|_p^p+(b_3-b_2)^p\sum\limits_{j>2}\mu(A_j\cap E_2),
\end{array}
$$
where the second equality follows from the fact that $\widetilde{A}_1=A_1\setminus E_2$ (up to a set of measure zero). The inequality 
is proved noting that, for $x\in A_j\cap E_2$ with $j>2$ it holds $|f(x)-b_2|=b_2-r_2<b_3-r_2\leq b_j-r_2=|f(x)-b_j|$, which implies 
$|f(x)-b_j|\geq|f(x)-b_2|+b_3-b_2$, and for $x\in A_1\cap E_2$ it holds $|f(x)-b_1|=|f(x)-b_2|$.
So, since $g$ is a minimizer we deduce that
$\mu(A_j\cap E_2)=0$ for all $j>2$. Thus, we get
$$
\|f-g\|_p^p=\int_{\widetilde{A}_1} |f(x)-b_1|^p\ d\mu(x)+\sum\limits_{j>2} \int_{A_j} |f(x)-b_j|^p\ d\mu(x)+
\int_{A_2\cup f^{-1}(\{r_2\})} |f(x)-b_2|^p \ d\mu(x),
$$
showing that 
$$
b_1\ind_{\widetilde{A}_1}+b_2\ind_{A_2\cup f^{-1}(\{r_2\})} +\sum_{j>2} b_j \ind_{A_j}\in P_{\mathscr{G}_{p,k}}(f).
$$
A similar argument shows that $\mu(A_j\cap C_1)=0$ and $\mu(\widetilde{A}_1 \cap C_j)=0$ for all $j\ge 2$.
Since $\{A_i\}_{1\leq i\leq q}$ is a partition we conclude that $\mu(C_1)=\sum_i \mu(A_i\cap C_1)=\mu(C_1\cap A_1)=\mu(C_1\cap \widetilde{A}_1)$, 
proving that $C_1\subset \widetilde{A}_1$ except for a set of measure $0$. On the other hand, using again that 
$\{\widetilde{A}_1, A_2\cup f^{-1}(\{r_2\}), \{A_j\setminus f^{-1}(\{r_2\})\}_{j>2}\}$ is also a partition,
we conclude that $\widetilde{A}_1 \subset C_1$ except for a set of measure $0$. 
In a similar way, we prove $\mu(\widetilde{A}_i\cap C_j)=\mu(\widetilde{A}_i\Delta C_i)=0$, for all $i\neq j$. 

At this point we should
mention that some of the $\tilde A_i$ could have measure $0$. For example this occurs if $A_1=f^{-1}(\{r_2\})$.
In any case, we have
$$
\|f-g\|_p^p=\sum\limits_{i=1}^q \int_{\widetilde{A}_i} |f(x)-b_i|^p\ d\mu(x)=
\sum\limits_{i=1}^q \int_{C_i} |f(x)-b_i|^p\ d\mu(x),
$$
showing that
$$
\hat g=\sum\limits_{i=1}^q b_i \ind_{C_i}\in P_{\mathscr{G}_{p,k}}(f),
$$
is a minimizer.
On the other hand, if $\mu(C_i)>0$ we have 
$\int_{C_i} |f(x)-\M_p(f,C_i)|^p \ d\mu(x)\le \int_{C_i} |f(x)-b_i|^p \ d\mu(x)$. The inequality
cannot be strict, otherwise we contradict the minimality of $\hat g$, showing that
$b_i$ is a $p$-th means of $f$ on $C_i$, and therefore,
$$
\widetilde g= \sum\limits_{i=1}^q \M_p(f,C_i) \ind_{C_i}\in P_{\mathscr{G}_{p,k}}(f),
$$
is a minimizer in $f$-special form, as we wanted to prove. 
In case that $q$ is the minimal among all minimizers, we conclude that $\mu(C_i)>0$ for all $i$.
\end{proof}

\begin{remark}
In the last part of the Theorem, for any minimizer $g$, we have constructed a minimizer $\widetilde g$ in
$f$-special form, but it may happens that some of the sets $(C_i)_i$ have measure $0$, which can be 
discarded to get a minimizer with fewer terms. An interesting question is if this procedure applied 
to any minimizer gives always a minimizer with the smallest possible number of terms (see Proposition \ref{pro:f_inG_k}).
\end{remark}

Recall that given $f\in L^p(\Omega,\F,\mu)$, the distribution of $f$ is the measure $\mu_f$ defined on $(\RR,\B)$ given by, for all $B\in \B$
$$
\mu_f(B)=\mu(f^{-1}(B)).
$$
Let $g$ be a minimizer of $f$ in $\mathscr{G}_{p,k}$ in  $f$-special form provided by Theorem \ref{pro:minimum} 
$$
g=\sum\limits_{i=1}^q a_i \ind_{f^{-1}([r_i,r_{i+1}))}, 
$$
So, $g=\ell\circ f$ with
$$
\ell=\sum\limits_{i=1}^q a_i \ind_{[r_i,r_{i+1})},
$$
and
$$
\begin{array}{ll}
\|f-g\|_p^p&\hspace{-0.2cm}=\int_\Omega |f(x)-g(x)|^p \ d\mu(x)=\int_\Omega |f(x)-\ell(f(x))|^p \ d\mu(x)=\int_\RR |y-\ell(y)|^p \ d\mu_f(y)\\
\\
&\hspace{-0.2cm}=\|\hbox{id}-\ell\|_{L^p(\RR,\B,\mu_f)}^p.
\end{array}
$$
Thus, the problem of finding a minimizer for $f$ is equivalent to find a minimizer for the identity function $\hbox{id}$ in $\mathscr{G}_{p,k}(\RR,\B,\mu_f)$.
The following result shows that when $\mu_f$ is continuous, this search can be done over the subclass of simple functions in $f$-special form. 
Before stating the result, let us fix some notations. The cumulative distribution associated to $\mu_f$ is the function $F_f(x)=\mu_f((-\infty,x])$ 
Notice that $F_f(-\infty)=0$ and $F_f(\infty)=\mu(\Omega)$. The convex support of $\mu_f$ is the interval $[\mathfrak{a}_f,\mathfrak{b}_f]$, where
$$
\mathfrak{a}_f=\sup\{z:\ F_f(z)=0\}, \mathfrak{b}_f=\inf\{z:\ F_f(z)=F_f(\infty)\}.
$$

The following lemma is needed to study the uniqueness of minimizers, where $p$-th means are characterized as roots of certain equations, 
suitable for our purposes. We include a proof, inspired by exercise 1.4.23 in \cite{Stroock}, for the sake of completeness.

\begin{lemma}
\label{characterization_mean}
Let $(\Omega,\F,\mu)$ be a finite measure space and $f\in L^p(\Omega,\F,\mu)$, for $p\in [1,\infty)$. 
For $p=1$, we also assume that $F_f$ is continuous and strictly increasing on $[\mathfrak{a}_f,\mathfrak{b}_f]$. 
Let $I\subset\RR$ be an interval with extremities $c,d \in \overline{\RR}$
such that $\mu_f(I)>0$.  Then, the $p$-th mean
$m=\M_p(f,f^{-1}(I),\mu)=\M_p(\hbox{id},I,\mu_f)$ is characterized as the unique solution of the equation
\begin{equation}
\label{eq:pmean}
\int_{I\cap (-\infty,m]} (m-x)^{p-1} \ d\mu_f(x)=\int_{I\cap (m,\infty)} (x-m)^{p-1} \ d\mu_f(x),
\end{equation}
which for $p=1$ is equivalent to 
\begin{equation}
\label{eq:1-mean}
F_f(m)-F_f(c)=\frac12(F_f(d)-F_f(c)).
\end{equation}
\end{lemma}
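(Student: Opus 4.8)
The plan is to transport everything to the image measure space $(\RR,\B,\mu_f)$, where by the very definition of the $p$-th mean the quantity $m=\M_p(\id,I,\mu_f)$ is a global minimizer of the convex function $\phi(a)=\int_I |x-a|^p\,d\mu_f(x)$. Here $\mu_f$ is a finite measure (since $\mu_f(\RR)=\mu(\Omega)<\infty$) and $x\mapsto|x|^p$ is $\mu_f$-integrable because $f\in L^p$, whence also $x\mapsto|x|^{p-1}\le 1+|x|^p$ is integrable; in particular $\phi(a)\le 2^{p-1}\int_I(|x|^p+|a|^p)\,d\mu_f(x)<\infty$ for every $a$. The whole statement then reduces to identifying the critical-point equation for $\phi$, which I would treat in the differentiable regime $p>1$ and the non-smooth regime $p=1$ separately.

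For $p>1$ I would first differentiate under the integral sign. The integrand has partial derivative $\partial_a|x-a|^p=-p\,|x-a|^{p-1}\,\mathrm{sign}(x-a)$, which is continuous in $a$ (its value at $x=a$ being $0$, since $p-1>0$) and, for $a$ ranging over a fixed bounded neighborhood, is dominated by some $C(1+|x|^{p-1})\in L^1(\mu_f)$; the \dct{} then gives $\phi'(a)=-p\int_I|x-a|^{p-1}\,\mathrm{sign}(x-a)\,d\mu_f(x)$. Since $t\mapsto|x-t|^p$ is strictly convex for each fixed $x$ and $\mu_f(I)>0$, the function $\phi$ is strictly convex, so it has a unique minimizer, namely the unique zero of the strictly increasing function $\phi'$. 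Splitting the integral at $x=m$, where the point $\{m\}$ contributes $0$ because $p-1>0$, and setting $\phi'(m)=0$ produces exactly \eqref{eq:pmean}, with uniqueness coming from strict monotonicity of $\phi'$.

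For $p=1$ the function $\phi(a)=\int_I|x-a|\,d\mu_f(x)$ is convex but only one-sided differentiable in general; I would compute its one-sided derivatives, which (by dominated convergence applied to the bounded difference quotients) read $\phi'_+(a)=\mu_f(I\cap(-\infty,a])-\mu_f(I\cap(a,\infty))$ and $\phi'_-(a)=\mu_f(I\cap(-\infty,a))-\mu_f(I\cap[a,\infty))$, differing by $2\mu_f(I\cap\{a\})$. The continuity hypothesis on $F_f$ makes $\mu_f$ atomless, so the two one-sided derivatives coincide and $\phi$ is genuinely differentiable with $\phi'(a)=\mu_f(I\cap(-\infty,a])-\mu_f(I\cap(a,\infty))$; the minimizer condition $\phi'(m)=0$ is precisely \eqref{eq:pmean} for $p=1$. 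Writing $I$ with endpoints $c<d$ and using that endpoints carry no mass, I then rewrite $\mu_f(I\cap(-\infty,m])=F_f(m)-F_f(c)$ and $\mu_f(I)=F_f(d)-F_f(c)$, which turns $\mu_f(I\cap(-\infty,m])=\tfrac12\mu_f(I)$ into \eqref{eq:1-mean}. Finally, strict monotonicity of $F_f$ on $[\mathfrak{a}_f,\mathfrak{b}_f]$ makes $\phi'$ strictly increasing, so \eqref{eq:1-mean} has a unique solution; this simultaneously shows the median interval degenerates to a single point, identifying the solution with $\M_1(f,f^{-1}(I))$ as defined.

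The main obstacle is the $p=1$ case: one must both justify the one-sided derivative formula for the convex integral functional and invoke the two extra hypotheses in an essential way — continuity of $F_f$ to eliminate atoms and collapse the two one-sided derivatives into an honest derivative, and strict monotonicity to guarantee a unique root. Without them the median is merely an interval and \eqref{eq:1-mean} need not have a solution at all. By contrast, for $p>1$ the only delicate point is the domination required to differentiate under the integral sign, which is mild here precisely because $p-1>0$ removes any singularity of the integrand at $x=a$.
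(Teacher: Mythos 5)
Your proof is correct and follows essentially the same route as the paper: both arguments characterize $m$ as the unique zero of the derivative of the convex functional $a\mapsto\int_I|x-a|^p\,d\mu_f(x)$, split the resulting first-order condition at $x=m$ to obtain \eqref{eq:pmean}, and use continuity plus strict monotonicity of $F_f$ to reduce it to \eqref{eq:1-mean} with a unique root when $p=1$. The only difference is technical: you justify the derivative formula by differentiating under the integral sign (with a domination argument for $p>1$ and one-sided difference quotients for $p=1$), whereas the paper reaches the same expression via the identity $|x-b|^p-|x-a|^p=p\int_a^b\bigl((t-x)^{p-1}\ind_{\{x\le t\}}-(x-t)^{p-1}\ind_{\{t<x\}}\bigr)\,dt$ and Fubini's theorem.
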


\begin{proof} 
The proof is based on the following equality for all $x,b\in\RR$
$$
|x-b|^p=p\int_{-\infty}^b (t-x)^{p-1} \ind_{\{x\le t\}} \ dt+p \int_b^\infty (x-t)^{p-1} \ind_{\{t< x\}} \ dt,
$$
which implies that
$$
|x-b|^p-|x-a|^p=p\int_{a}^b \left((t-x)^{p-1} \ind_{\{x\le t\}} -(x-t)^{p-1} \ind_{\{t< x\}}\right)\ dt.
$$
Fix $m\in\mathbb R$. Define a function $L:\RR\to\RR$ by 
$$
L(b)=\int_I |x-b|^p \ d\mu_f(x)-\int_I |x-m|^p \ d\mu_f(x).
$$ 
It is clear that $m$ is a minimum of $L$ if and only if $m$ is a $p$-th mean. 
Using Fubini's Theorem and the previous equality, we obtain that for all $b\in\RR$
$$
L(b)=p\int_m^b \int_{I\cap (-\infty,t]} (t-x)^{p-1} \ d\mu_f(x) dt-
p\int_m^b \int_{I\cap (t,\infty)}  (x-t)^{p-1}  d\mu_f(x) dt
$$ 
Note that $L$ is convex, coercive and continuous and then reaches a minimum.

Suppose $p>1$. The functions 
$t \mapsto \int_{I\cap [-\infty,t]} (t-x)^{p-1} \ d\mu_f(x)$ and $t \mapsto \int_{I\cap (t,\infty)}  (x-t)^{p-1} \ d\mu_f(x)$
are continuous, and therefore $L$ is strictly convex and 
continuously differentiable, which proves that $L'(b)=0$ is the equation for the unique minima, that is,
$$
L'(b)=p\int_{I\cap (-\infty,b]} (b-x)^{p-1} \ d\mu_f(x)-
p\int_{I\cap (b,\infty)} (x-b)^{p-1} \ d\mu_f(x)= 0.
$$
It follows that $m$ is the $p$-th mean if and only if $m$ fulfills \eqref{eq:pmean}. 

For $p=1$, using that $F_f$ is continuous, we have 
$$
L(b)=\int_m^b F_f(t)-F_f(c)-(F_f(d)-F_f(t)) \ dt.
$$
Again, since $F_f$ is continuous we obtain that $L$ is continuously differentiable. Then, if $b$ is any minima for $L$, it holds that
$L'(b)=0$, that is, $F_f(b)-F_f(c)=\frac{1}{2}(F_f(d)-F_f(c))$. Since $F_f$ is assumed to be 
strictly increasing, this equation has a unique solution, and then $L$ has exactly one minimum. Then, $m$ is a $1$-th if and only if $F_f(m)-F_f(c)=\frac{1}{2}(F_f(d)-F_f(c))$.
\end{proof}

Notice that in the previous Lemma we can replace $(-\infty,m]$ by $(-\infty,m)$ and
$(m,\infty)$ by $[m,\infty)$ in \ref{eq:pmean}, because $x=m$ does not add to the integrals. 
In the case $F_f$ is just increasing, $I=(c,d]$ and $p=1$, all the $1$-th means satisfy the equations $L^{'+}(m)\ge 0$ and $L^{'-}(m)\le 0$, 
which are equivalent to
$$
F_f(m)-F_f(c)\ge \frac{1}{2}(F_f(d)-F_f(c)), \ F_f(m-)-F_f(c)\le \frac{1}{2}(F_f(d)-F_f(c)),
$$
and the solution set is, in general, an interval.\\

The next result shows that when $F_f$ is continuous all minimizers are in $f$-special form.

\begin{corollary}
\label{cor:all_specialform}
Let $(\Omega,\F,\mu)$ be a finite measure space, $p\in [1,\infty)$ and $k\geq 1$. Let $f\in L^p(\Omega,\F,\mu)$ and assume that $F_f$ is continuous. 
Then any minimizer $g\in P_{\mathscr{G}_{p,k}}(f)$ is of the $f$-special form 
\begin{equation}
\label{eq:special}    
g=\sum\limits_{i=1}^k a_i \ind_{f^{-1}([r_i,r_{i+1}))},
\end{equation}
where 
\begin{itemize}
    \item $[r_i,r_{i+1})$ has positive $\mu_f$-measure for all $i\in\{1,...,k\}$;
    \item $\mathfrak{a}_f=r_1<...<r_k<r_{k+1}=\mathfrak{b}_f$ and $r_i=\frac{a_i+a_{i+1}}{2}$ for all $i\in\{2,...,k\}$;
    \item $a_i$ is a $p$-th mean of $\hbox{id}$ on $[r_i,r_{i+1})$ under $\mu_f$ for all $i\in\{1,...,k\}$. Moreover, if $F_f$ 
    is strictly increasing on $[\mathfrak{a}_f,\mathfrak{b}_f]$, then 
    $a_i=\M_p(\hbox{id},[r_i,r_{i+1}),\mu_f)$ for all $i\in\{1,...,k\}$.
\end{itemize}
\end{corollary}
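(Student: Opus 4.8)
The plan is to bootstrap from Theorem~\ref{pro:minimum}: that result already produces \emph{one} minimizer in $f$-special form, and the only extra input needed to promote this to \emph{every} minimizer is that continuity of $F_f$ annihilates the measure of the level sets $f^{-1}(\{r\})$. Start with an arbitrary minimizer $g=\sum_{i=1}^q b_i\ind_{A_i}\in P_{\mathscr{G}_{p,k}}(f)$ with $b_1<\dots<b_q$, $q\le k$, all $\mu(A_i)>0$. The construction in the last part of Theorem~\ref{pro:minimum} attaches to $g$ the breakpoints $r_i$ and the sets $C_i=f^{-1}([r_i,r_{i+1}))$, together with the modified sets $\widetilde A_i=(A_i\cup f^{-1}(\{r_i\}))\setminus f^{-1}(\{r_{i+1}\})$ satisfying $\mu(\widetilde A_i\,\Delta\,C_i)=0$. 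Since $F_f$ is continuous, $\mu(f^{-1}(\{r_j\}))=\mu_f(\{r_j\})=0$ for every $j$, so $\widetilde A_i=A_i$ up to a null set; hence $\mu(A_i\,\Delta\,C_i)=0$ and $g=\sum_i b_i\ind_{C_i}$ almost everywhere. In other words $g$ \emph{itself} has the interval form \eqref{eq:special}, with each $C_i=f^{-1}([r_i,r_{i+1}))$ of positive measure. Because $\mu_f$ is carried by $[\mathfrak a_f,\mathfrak b_f]$ and its endpoints are $\mu_f$-null, we may take $r_1=\mathfrak a_f$ and $r_{q+1}=\mathfrak b_f$.

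Next I would rule out $q<k$. Continuity of $F_f$ makes $\mu_f$ nonatomic, and nontriviality of $\mu$ forces $\mathfrak a_f<\mathfrak b_f$ (otherwise $\mu_f$ would be a point mass). If $\id$ were $\mu_f$-a.e.\ constant on each of the $q$ intervals $[r_i,r_{i+1})$, then $\mu_f$ would be supported on $q$ points, contradicting nonatomicity; so for some index $i_0$ the identity is non-constant on a set of positive $\mu_f$-measure in $J:=[r_{i_0},r_{i_0+1})$. Let $\ell=\inf(\mathrm{supp}\,\mu_f\cap J)$; the optimality equation \eqref{eq:pmean} gives $\ell<a_{i_0}$, so one can pick $s\in(\ell,a_{i_0})$ with $\mu_f([r_{i_0},s))>0$ and $\mu_f([s,r_{i_0+1}))>0$. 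Replacing $a_{i_0}\ind_{J}$ by the two $p$-means on $[r_{i_0},s)$ and $[s,r_{i_0+1})$ yields an element of $\mathscr{G}_{p,q+1}\subseteq\mathscr{G}_{p,k}$, and the error drops \emph{strictly}: on the left half the constant $a_{i_0}$ lies to the right of the whole essential range, hence is not a $p$-mean there, and strict convexity of $t\mapsto|t|^p$ for $p>1$ (resp.\ a direct $L^1$ estimate for $p=1$) makes the replacement strictly better. This contradicts the minimality of $g$, so $q=k$ and all $k$ intervals have positive $\mu_f$-measure; in particular $\mathfrak a_f=r_1<r_2<\dots<r_k<r_{k+1}=\mathfrak b_f$.

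It remains to pin down the coefficients. Fixing the partition $\{C_i\}$, minimality of $g$ forces each $a_i$ to minimize $a\mapsto\int_{C_i}|f-a|^p\,d\mu=\int_{[r_i,r_{i+1})}|\id-a|^p\,d\mu_f$ — otherwise replacing $a_i$ by $\M_p(\id,[r_i,r_{i+1}),\mu_f)$ would strictly lower $\|f-g\|_p$ — so $a_i$ is a $p$-th mean of $\id$ on $[r_i,r_{i+1})$ under $\mu_f$. For $p>1$ the $p$-mean is unique by strict convexity, giving $a_i=\M_p(\id,[r_i,r_{i+1}),\mu_f)$ at once. For $p=1$ the median is a priori only determined up to an interval, but when $F_f$ is strictly increasing on $[\mathfrak a_f,\mathfrak b_f]$ Lemma~\ref{characterization_mean}, in the form \eqref{eq:1-mean}, shows that $F_f(a_i)-F_f(r_i)=\tfrac12\bigl(F_f(r_{i+1})-F_f(r_i)\bigr)$ has a unique root, so again $a_i=\M_1(\id,[r_i,r_{i+1}),\mu_f)$. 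The midpoint relations between the $a_i$ and the $r_i$ are inherited directly from the construction in Theorem~\ref{pro:minimum}.

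I expect the only genuinely delicate point to be the strict-decrease step of the second paragraph: one must guarantee that some interval carries non-degenerate mass and that splitting it produces two genuinely distinct $p$-means, with the $p=1$ case needing separate care since medians are not unique and $p$-th powers are not strictly convex there. Everything else is bookkeeping layered on top of Theorem~\ref{pro:minimum} and Lemma~\ref{characterization_mean}.
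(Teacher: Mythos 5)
Your proof is correct and its core coincides with the paper's: both arguments take an arbitrary minimizer, run it through the modification of Theorem \ref{pro:minimum}, and observe that continuity of $F_f$ forces $\mu(f^{-1}(\{r_j\}))=0$, hence $\widetilde A_i=A_i=C_i$ up to null sets; the identification of the $a_i$ as $p$-th means (unique for $p>1$, and for $p=1$ via \eqref{eq:1-mean} when $F_f$ is strictly increasing) is also the same. The one place you diverge is in ruling out $q<k$: the paper disposes of this in one line by noting that a nonatomic $\mu_f$ prevents $f$ from lying in any $\mathscr{G}_{p,m}$ and then invoking Proposition \ref{pro:f_inG_k}, whereas you inline a direct interval-splitting argument. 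Your version is self-contained and makes the mechanism visible, at the cost of redoing work the paper has already packaged (the proof of Proposition \ref{pro:f_inG_k} is itself exactly such a splitting argument). Two small points to tighten: define $\ell$ as the infimum of the support of the \emph{restricted} measure $\mu_f|_J$ rather than of $\mathrm{supp}\,\mu_f\cap J$ — with your definition, mass of $\mu_f$ accumulating just below $r_{i_0}$ could put $r_{i_0}$ in $\mathrm{supp}\,\mu_f$ while $\mu_f([r_{i_0},s))=0$ for some $s>\ell$, breaking the choice of $s$; and for $p=1$ the identity $F_f(m)-F_f(c)=\tfrac12(F_f(d)-F_f(c))$ holds for \emph{every} median once $F_f$ is continuous (from $L'^{\pm}(m)$ having opposite signs), which is what you actually need to conclude $\ell<a_{i_0}$ without assuming strict monotonicity at that stage.
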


\begin{proof} Notice first that $f\notin \mathscr{G}_{p,k}$, because the image of $f$ cannot be a finite set a.e., since $\mu_f$ is not atomic. This implies 
that there is no minimizer in $\mathscr{G}_{p,q}$, with $q<k$  (see Proposition \ref{pro:f_inG_k} below). So, any minimizer has the structure
$$
g=\sum\limits_{i=1}^k a_i \ind_{A_i}
$$
where $a_1<...<a_k$, $\{A_i\}_{1\leq i\leq k}$ is a partition with sets of positive measure and $a_i$ is a $p$-th mean of $f$ in $A_i$, for all $i\in\{1,...,k\}$. 
In the previous proof, we then modify this minimizer to get one in $f$-special
form. If one goes over that proof and using the fact that $F_f$ is continuous, one realizes that in equation \eqref{eq:especial_minimizer}, we get $\widetilde A_i=A_i$ a.e. and then $A_i=C_i$ a.e.,
proving that $A_i=f^{-1}([r_i,r_{i+1}))$ a.e. 

The fact that $a_i$ is a $p$-th mean is just the fact that $g$ is a minimizer. For $p>1$, the uniqueness of the $p$-th mean shows that 
$a_i=\M_p(\hbox{id},[r_i,r_{i+1}),\mu_f)$. This is also true for $p=1$, when $F_f$ is continuous and strictly increasing in $[\mathfrak{a}_f,\mathfrak{b}_f]$
(see Lemma \ref{characterization_mean}).
\end{proof}

\begin{remark} The previous result could be used as the basis of an algorithm to approximate a minimizer. Assume that $\mu_f$ is a continuous
distribution. For any $s\in \RR$, were $s$ plays the role of $r_2$
in the representation \eqref{eq:special}, we define $r_1(s)=-\infty, r_2(s)=s$ and $a_1=a_1(s)=\M_p(\hbox{id},(-\infty,s),\mu_f)$. Then, we define
$a_2(s)=2r_2(s)-a_1(s)$, which is a relation that should satisfy any minimizer. Then, compute $r_3(s)$ so that
$$
a_2(s)=\M_p(\hbox{id},[r_2(s),r_3(s)),\mu_f).
$$
and continue in this way defining $a_3(s),r_4(s),..., a_k(s), r_{k+1}(s)$. It may happens that at some iteration $r_{i+1}(s)$ 
is not well defined for some $i\le k-1$ because, for all $t\in [r_i(s),\infty]$
$$
a_i(s)>\M_p(\hbox{id},[r_i(s),t),\mu_f),
$$
which shows that there is no minimizer starting with $r_2=s$. So, we say $s$ is admissible if $r_{k+1}(s)$ is well defined. 
For every admissible $s$ we have a candidate
$$
\ell_s=\sum\limits_{i=1}^{k-1} a_i(s) \ind_{[r_i(s),r_{i+1}(s))}+ \M_p(\hbox{id},[r_k(s),\infty),\mu_f)\ind_{[r_k(s),\infty)}
$$
and we can compute $R(s)=\int |x-\ell_s(x)|^p \ d\mu_f(x)$. For $s$ which is not admissible put $R(s)=\infty$. Then a minimizer of $R$ gives a minimizer
for $f$. One expects that the set of admissible values of $s$ is an interval. We shall work on this algorithm in a forcoming paper.

For example, if $\mu_f$ is a normal $\mathcal N(0,1)$, $p=2$ and $k=3$, this algorithm gives the following approximation 
$$
h\approx -1.2\ \ind_{(-\infty,-0.6)}+0\ \ind_{[-0.6,0.6)}+1.2\ \ind_{[0.6,\infty)}.
$$
Notice that $\|\hbox{id}\|_2=1$ and $(\mathscr{D}_{2,3}(\hbox{id}))^2\approx 0.18$, which means that, in the language of statistics, $82\%$ of the variance of $f$ 
is explained by a simple function taking $3$ values.
\end{remark}

Uniqueness of minimizers is a much harder problem. Here, we present a partial result in case $\mu_f$ satisfies a certain monotone likelihood ratio
property.

\begin{theorem} 
\label{pro:uniqueness}
Let $(\Omega,\F,\mu)$ be a finite measure space, $p\in [1,\infty)$ and $f\in L^p(\Omega,\F,\mu)$. Assume $\mu_f$ has a 
density with respect to the Lebesgue measure $\Psi:(\mathfrak{a}_f,\mathfrak{b}_f)\to (0,\infty)$, which we 
extend by $0$ outside this interval. Consider for $s\in(0,\mathfrak{b}_f-\mathfrak{a}_f)$ 
the function $G_s:(\mathfrak{a}_f, \mathfrak{b}_f)\to [0,\infty)$
given by $G_s(y)=\frac{\Psi(y+s)}{\Psi(y)}$ and assume that $G_s$ is decreasing. Moreover, we suppose that one of the following hypotheses hold:
\begin{itemize}
\item[(H1)] either $\mathfrak{a}_f$ or $\mathfrak{b}_f$ is finite;
\item[(H2)] $G_s((\mathfrak{a}_f,\mathfrak{b}_f))$ is an infinite set;
\item[(H3)] $\Psi$ is continuous.
\end{itemize}
Then there exists a unique minimizer for $f$ in $\mathscr{G}_{p,k}$ for all $k\ge 1$.
\end{theorem}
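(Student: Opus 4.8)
The plan is to reduce the problem to the uniqueness of the break-points of an $f$-special minimizer and then to run a ``shooting'' argument whose monotonicity is driven by the hypothesis that $G_s$ is decreasing. Since $\mu_f$ has a strictly positive density $\Psi$ on $(\mathfrak a_f,\mathfrak b_f)$, the cumulative distribution $F_f$ is continuous and strictly increasing on $[\mathfrak a_f,\mathfrak b_f]$. Hence, by Corollary \ref{cor:all_specialform} together with Lemma \ref{characterization_mean}, every minimizer of $f$ in $\mathscr G_{p,k}$ is (after identifying $f$ with $\id$ on $(\RR,\B,\mu_f)$) of the form $g=\sum_{i=1}^k a_i\,\ind_{[r_i,r_{i+1})}$ with $\mathfrak a_f=r_1<\dots<r_{k+1}=\mathfrak b_f$, each interval of positive $\mu_f$-measure, $a_i=\M_p(\id,[r_i,r_{i+1}),\mu_f)$ the \emph{unique} $p$-th mean, and the midpoint relation $r_i=\tfrac{a_{i-1}+a_i}{2}$ for $i\in\{2,\dots,k\}$. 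Thus a minimizer is completely determined by its interior break-points $r_2<\dots<r_k$, and it suffices to prove that there is exactly one admissible such sequence.

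To this end I would parametrize candidate minimizers by $s=r_2$. Setting $r_1=\mathfrak a_f$ and $a_1(s)=\M_p(\id,[\mathfrak a_f,s),\mu_f)$, the midpoint relation forces $a_2(s)=2s-a_1(s)$, and then $r_3(s)$ is the unique point with $\M_p(\id,[s,r_3(s)),\mu_f)=a_2(s)$; iterating produces $a_i(s),r_{i+1}(s)$ exactly as in the algorithm of the Remark following Corollary \ref{cor:all_specialform}. A break-point sequence yields a genuine minimizer precisely when it is admissible and terminates correctly, i.e. $r_{k+1}(s)=\mathfrak b_f$. Hence uniqueness will follow once I show that $s\mapsto r_{k+1}(s)$ is strictly increasing on the set of admissible parameters.

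The heart of the argument is a monotone comparison lemma for the transfer step $(r_i,a_{i-1})\mapsto(r_{i+1},a_i)$. Concretely, I would prove that if one increases the left endpoint of an interval and/or the prescribed mean, then the point at which the $p$-th mean of $\id$ reaches that prescribed value moves strictly to the right; feeding this through the recursion $a_i=2r_i-a_{i-1}$ shows, by induction on $i$, that $s<s'$ forces $r_i(s)<r_i(s')$ and $a_i(s)<a_i(s')$ simultaneously for every $i$, and in particular $r_{k+1}(s)<r_{k+1}(s')$. The mechanism that makes the two coupled inequalities propagate without one overtaking the other is exactly the decreasing character of $G_s(y)=\Psi(y+s)/\Psi(y)$: writing the defining identity \eqref{eq:pmean} for the $p$-th mean as a balance between the mass to the left and to the right of the mean, and comparing it on two shifted intervals, the monotone likelihood ratio inequality $\Psi(y_1+s)/\Psi(y_1)\ge\Psi(y_2+s)/\Psi(y_2)$ for $y_1<y_2$ controls how the balance tilts under a rightward shift, guaranteeing that the induced displacement of the mean never exceeds the displacement of the interval. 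This comparison, which I expect to be the main obstacle, must be carried out for all $p\in[1,\infty)$ using \eqref{eq:pmean} (and \eqref{eq:1-mean} when $p=1$, legitimate since $F_f$ is continuous and strictly increasing).

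Finally, the decreasing hypothesis on $G_s$ by itself only yields non-strict monotonicity in the comparison, so $s\mapsto r_{k+1}(s)$ could a priori be merely non-decreasing; this is where the three alternative hypotheses enter. Under (H1) a finite endpoint pins the first or last interval and so breaks any plateau in the shooting; under (H2) the range $G_s((\mathfrak a_f,\mathfrak b_f))$ being infinite forces $G_s$ to be strictly decreasing on a set of positive $\mu_f$-measure, rendering the likelihood ratio inequality strict; and under (H3) continuity of $\Psi$ promotes the inequalities in the transfer lemma to strict ones. In each case I would upgrade the comparison to a strict one, concluding that $s\mapsto r_{k+1}(s)$ is strictly increasing and therefore equals $\mathfrak b_f$ for at most one value of $s$. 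Combined with the existence of a minimizer guaranteed by Theorem \ref{pro:minimum}, this yields the unique minimizer for every $k\ge1$.
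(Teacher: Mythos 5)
Your overall strategy is the one the paper uses: after reducing to $f$-special form via Corollary \ref{cor:all_specialform} (legitimate here because $\Psi>0$ makes $F_f$ continuous and strictly increasing), the paper compares two putative minimizers with offset $s=\widetilde r_2-r_2\ge 0$ and propagates the displacements $\delta_i=\widetilde a_i-a_i$, $\eta_i=\widetilde r_i-r_i$ through the chain $0<\delta_1\le\eta_2\le\delta_2\le\dots\le\eta_k\le\delta_k$, using exactly the monotone likelihood ratio inequality you describe, and then uses the right boundary condition ($\widetilde a_k$ must be the $p$-th mean of $[\widetilde r_k,\mathfrak b_f]$) to force all inequalities into equalities. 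This is your shooting argument in two-minimizer form, and up to that point your plan is sound, including the observation that the transfer inequalities are a priori only non-strict and that (H1)--(H3) are needed to exclude the ``plateau.''

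The gap is in your treatment of (H3). Continuity of $\Psi$ does \emph{not} promote the transfer inequalities to strict ones: the equality case forces $\frac{\Psi(y+s)}{\Psi(y)}$ to be constant on each interval $(r_i,r_{i+1})$, and a continuous, strictly positive $\Psi$ satisfying $\Psi(y+s)=C\,\Psi(y)$ (for instance $\Psi(y)=e^{\lambda y}h(y)$ with $h$ continuous, positive and $s$-periodic) realizes all of these equalities while keeping $G_s$ decreasing (constant). What continuity actually buys, in the paper, is the ability to glue the piecewise-constant ratios across the finitely many break points into a single global functional equation $\Psi(y+s)=C\,\Psi(y)$ on all of $\RR$ (recall the equality case already forces $\mathfrak a_f=-\infty$ and $\mathfrak b_f=+\infty$); iterating gives $\Psi(y+ns)=C^n\Psi(y)$, and the contradiction comes from the \emph{finiteness of the measure} $\int_\RR\Psi<\infty$, since $\int_{ns}^{ns+1}\Psi=C^n\int_0^1\Psi$ cannot tend to $0$ when $C\ge 1$ (and symmetrically for $C<1$). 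Without this integrability step your argument stalls precisely in case (H3). A secondary, smaller imprecision: for the middle intervals the mechanism is not that ``the displacement of the mean never exceeds the displacement of the interval'' but the reverse ordering $\eta_i\le\delta_i\le\eta_{i+1}$ coming from the midpoint relation $a_i=2r_i-a_{i-1}$ combined with the MLR comparison; only the first interval satisfies $\delta_1\le\eta_2$. This does not derail the induction, but the statement of your transfer lemma would need to be adjusted accordingly.
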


\begin{proof} The case $k=1$ is direct from the fact that $F_f$ is is strictly increasing (because $\Psi$ is strictly 
positive on $(\mathfrak{a}_f,\mathfrak{b}_f)$) and therefore the $p$-th means are unique. So, we assume that $k\ge 2$. 

According to Corollary \ref{cor:all_specialform}, and since $F_f$ is strictly increasing all minimizers for $f$ have the $f$-special form given in 
\eqref{eq:special}. 
Fix one of them $g=\ell\circ f$, where
$$
\ell=\sum\limits_{i=1}^k a_i \ind_{I_i}
$$
with $r_1=\mathfrak{a}_f, r_{k+1}=\mathfrak{b}_f, r_i=\frac{a_{i-1}+a_i}{2}$ for $i\in\{2,...,k\}$, 
$I_i=[r_i,r_{i+1})$ for $i\in\{1,...,k-1\}$, $I_k=[r_k,r_{k+1}]$, $\mu_f(I_i)>0$ for $i\in\{1,...,k\}$  
and $a_i=\M_p(\hbox{id},I_i,\mu_f)$ for $i\in\{1,...,k\}$. 
Assume there exists another minimizer $\widetilde g=\widetilde \ell \circ f$ with
$$
\widetilde\ell=\sum\limits_{i=1}^{k} \widetilde a_i \ind_{\widetilde I_i}
$$
where $\widetilde r_1=\mathfrak{a}_f, \widetilde r_{k+1}=\mathfrak{b}_f, \widetilde r_i=\frac{\widetilde a_{i-1}+\widetilde a_i}{2}$ for $i\in\{2,...,k\}$, 
$\widetilde I_i=[\widetilde r_i,\widetilde r_{i+1})$ for $i\in\{1,...,k-1\}$, $\widetilde I_k=[\widetilde r_k, \widetilde r_{k+1}]$, 
$\mu_f(\widetilde I_i)>0$ for $i\in\{1,...,k\}$ and 
$\widetilde a_i=\M_p(\hbox{id},\widetilde I_i,\mu_f)$ for $i\in\{1,...,k\}$. We need to prove that $\ell=\widetilde\ell$. 
Consider $s=\widetilde r_2-r_2$. 
Switching $g$ and $\widetilde g$ if necessary, we can suppose without loss of generality that $s\ge 0$, and since 
$r_2,\widetilde r_2 \in (\mathfrak{a}_f,\mathfrak{b}_f)$ then $s<\mathfrak{b}_f-\mathfrak{a}_f$.
Define $\delta_i=\widetilde a_i-a_i$ for all $i\in \{1,...,k\}$ and
$\eta_i=\widetilde r_i-r_i$ for all $i\in \{2,...,k\}$. 

\underline{Case 1:} Suppose that $s=0$. We shall prove that $\ell=\widetilde\ell$. Notice that $I_1=\widetilde I_1$ and $\widetilde a_1=\M_p(\hbox{id},I_1,\mu_f)$ 
and by uniqueness of the $p$-th mean we deduce that $a_1=\widetilde a_1$. But $\widetilde a_1$, $\widetilde a_2$ and $\widetilde r_2$ are related by  
$\widetilde a_2=2\widetilde r_2-\widetilde a_1=2r_2-a_1=a_2$, showing that 
$\widetilde a_2=a_2$. Using the fact that $a_2=\M_p(\hbox{id},I_2,\mu_f)=\M_p(\hbox{id},\widetilde I_2,\mu_f)$ and 
Lemma \ref{characterization_mean}, we have that
\begin{align*}
\int_{a_2}^{\widetilde r_3} (x-a_2)^{p-1}\Psi(x)dx&=\int_{\widetilde a_2}^{\widetilde r_3} (x-a_2)^{p-1}\Psi(x)dx=
\int_{\widetilde r_2}^{\widetilde a_2} (a_2-x)^{p-1}\Psi(x)dx\\
&=\int_{r_2}^{a_2}(a_2-x)^{p-1} \Psi(x)dx=\int_{a_2}^{r_3}(x-a_2)^{p-1}\Psi(x)dx.
\end{align*}
Since $\widetilde r_3\le \mathfrak{b}_f$ and since $\Psi$ is strictly positive, we conclude that $\widetilde r_3=r_3$. Repeating this argument
we conclude that $\widetilde r_i=r_i$ for all $i\in\{2,...,k\}$ and $\widetilde a_i=a_i$ for all $i\in\{1,...,k\}$. Thus $\widetilde \ell=\ell$.

\underline{Case 2:} Suppose that $s>0$ and let us arrive to a contradiction if we suppose that (H1), (H2) or (H3) holds. 
This part will be divided in several steps.

\underline{Step 1:} We are going to show that the following properties hold:
\begin{enumerate}
    \item[(a)] $\delta_{k}\geq\eta_{k}\geq\delta_{k-1}\geq\eta_{k-1}\geq... \geq\eta_2\geq\delta_1$;
    \item[(b)] if one of these inequalities is strict then all the inequalities on the left are also strict;
    \item[(c)] all of these inequalities are in fact equalities if and only if $\mathfrak{a}_f=-\infty$ and
    for all $i\in \{1,...,k-1\}$ it holds
    $$
    \forall y \in (r_i,r_{i+1})\ \ \frac{\Psi(y+s)}{\Psi(y)}=\frac{\Psi(a_i+s)}{\Psi(a_i)}.
    $$
\end{enumerate}
Define a function $\phi:z\mapsto \int_{\mathfrak{a}_f}^{a_1+z}(z+a_1-x)^{p-1}\Psi(x)dx-\int_{a_1+z}^{\widetilde r_2}(x-a_1-z)^{p-1}\Psi(x)dx$. It is clear that $\phi$ is strictly increasing. 
Recall that $\widetilde a_1=\M_p(\hbox{id},[\mathfrak{a}_f,\widetilde r_2),\mu_f)$, which is characterized by 
$$
\int_{\mathfrak{a}_f}^{\widetilde a_1} (\widetilde a_1-x)^{p-1}\Psi(x) \ dx=\int_{\widetilde a_1}^{\widetilde r_2}(x-\widetilde a_1)^{p-1} \Psi(x) \ dx,
$$
so $\phi(\delta_1)=0$. Note also that
\begin{align*}
\phi(0)&=\int_{\mathfrak{a}_f}^{a_1}(a_1-x)^{p-1}\Psi(x)dx-\int_{a_1}^{\widetilde r_2}(x-a_1)^{p-1}\Psi(x)dx \\
&=\int_{a_1}^{r_2}(x-a_1)^{p-1}\Psi(x)dx-\int_{a_1}^{\widetilde r_2}(x-a_1)^{p-1}\Psi(x)dx<0,
\end{align*} 
since $\widetilde r_2-r_2=s>0$. Moreover, we have
\begin{align*}
\int_{\mathfrak{a}_f}^{a_1+s} (a_1+s-x)^{p-1}\Psi(x)dx&\ge  \int_{\mathfrak{a}_f+s}^{a_1+s} (a_1+s-x)^{p-1} \Psi(x)dx=
\int_{\mathfrak{a}_f}^{a_1} (a_1-x)^{p-1} \frac{\Psi(x+s)}{\Psi(x)} \Psi(x)dx\\
&\ge \frac{\Psi(a_1+s)}{\Psi(a_1)} \int_{\mathfrak{a}_f}^{a_1} (a_1-x)^{p-1} \Psi(x)dx=
\frac{\Psi(a_1+s)}{\Psi(a_1)} \int_{a_1}^{r_2} (x-a_1)^{p-1} \Psi(x)dx\\
&\ge \int_{a_1}^{r_2} (x-a_1)^{p-1} \frac{\Psi(x+s)}{\Psi(x)} \Psi(x) \ dx=\int_{a_1+s}^{\widetilde r_2} (x-a_1-s)^{p-1} \Psi(x)dx
\end{align*}
proving that $\phi(s)\geq 0$. It follows that $0<\delta_1\le s=\eta_2$.
The only way that $\delta_1=\eta_2=s$ is that the previous inequalities are only equalities, which means
that $\mathfrak{a}_f=-\infty$ and $\frac{\Psi(y+s)}{\Psi(y)}=\frac{\Psi(a_1+s)}{\Psi(a_1)}$ holds
for all $y \in (\mathfrak{a}_f,r_2)$ $dy$-a.e., but since $G_{s}$ is decreasing this property holds for all $y \in (\mathfrak{a}_f,r_2)$. We summarize
this condition for future reference
\begin{equation}
\label{eq:equality1}
\mathfrak{a}_f=-\infty \hbox{ and } \forall y \in (\mathfrak{a}_f,r_2)\ \ \frac{\Psi(y+s)}{\Psi(y)}=\frac{\Psi(a_1+s)}{\Psi(a_1)}.
\end{equation}
On the other hand, since $\widetilde r_2=\frac{\widetilde a_1+\widetilde a_2}{2}$, we deduce 
$$
\widetilde a_2=2\widetilde r_2-\widetilde a_1=2r_2-a_1+2\eta_2-\delta_1=a_2+2\eta_2-\delta_1,
$$
from where we deduce that $\delta_2=2\eta_2-\delta_1\ge \eta_2$, with equality $\delta_2=\eta_2$ if and only
if $\delta_2=\eta_2=\delta_1=s$.
Now, if there exists $\widetilde r_3\le \mathfrak{b}_f$ such that 
$$
\M_p(\hbox{id},[\widetilde r_2,\widetilde r_3),\mu_f)=\widetilde a_2,
$$
we deduce that $\eta_3\ge \delta_2$. Indeed, this follows from the inequalities 
\begin{align*}
\int_{\widetilde a_2}^{\widetilde r_3} (x-\widetilde a_2)^{p-1}\Psi(x)dx &=
\int_{\widetilde r_2}^{\widetilde a_2} (\widetilde a_2-x)^{p-1}\Psi(x)dx = \int_{r_2+\eta_2}^{a_2+\delta_2}(\widetilde a_2-x)^{p-1} \Psi(x) \ dx  \\
&\geq \int_{r_2+\delta_2}^{a_2+\delta_2} (\widetilde a_2-x)^{p-1}\Psi(x)dx= \int_{r_2}^{a_2}(a_2-x)^{p-1} \Psi(x+\delta_2)dx \\
&=\int_{r_2}^{a_2} (a_2-x)^{p-1}\frac{\Psi(x+\delta_2)}{\Psi(x)} \Psi(x)dx \ge \frac{\Psi(a_2+\delta_2)}{\Psi(a_2)}\int_{r_2}^{a_2} (a_2-x)^{p-1}\Psi(x)dx \\
&=\frac{\Psi(a_2+\delta_2)}{\Psi(a_2)} \int_{a_2}^{r_3} (x-a_2)^{p-1}\Psi(x)dx\ge \int_{a_2}^{r_3} (x-a_2)^{p-1}\frac{\Psi(x+\delta_2)}{\Psi(x)}\Psi(x)dx\\
&=\int_{a_2}^{r_3} (x-a_2)^{p-1}\Psi(x+\delta_2)dx=\int_{a_2+\delta_2}^{r_3+\delta_2} (x-\widetilde a_2)^{p-1}\Psi(x)dx\\
&=\int_{\widetilde a_2}^{r_3+\delta_2} (x-\widetilde a_2)^{p-1}\Psi(x)dx
\end{align*}
proving that $\widetilde r_3\geq r_3+\delta_2$, i.e. $\eta_3\ge \delta_2$. Also, we notice that $\eta_3=\delta_2$ if and only if 
\begin{equation}
\label{eq:equality2}    
\eta_2=\delta_2 \hbox{ and } \forall y \in (r_2,r_3)\ \ \frac{\Psi(y+\delta_2)}{\Psi(y)}=\frac{\Psi(a_2+\delta_2)}{\Psi(a_2)},
\end{equation}
which in particular implies that $\eta_3=\eta_2=\delta_2=\delta_1=s$ and \eqref{eq:equality1} holds. Iterating this idea, we complete Step 1.

\underline{Step 2:} Since $\widetilde g$ has $f$-special form, we have that 
$\widetilde a_k=\M_1(\hbox{id},[\widetilde r_k,\mathfrak{b}_f],\mu_f)$. Since $\delta_k\geq\eta_k$ by $(a)$ in Step 1, we obtain that
\begin{align*}
\int_{\widetilde a_k}^{\mathfrak{b}_f} (x-\widetilde a_k)^{p-1}\Psi(x)dx &=
\int_{\widetilde r_k}^{\widetilde a_k} (\widetilde a_k-x)^{p-1}\Psi(x)dx = \int_{r_k+\eta_k}^{a_k+\delta_k}(\widetilde a_k-x)^{p-1} \Psi(x) \ dx  \\
&\geq \int_{r_k+\delta_k}^{a_k+\delta_k} (\widetilde a_k-x)^{p-1}\Psi(x)dx= \int_{r_k}^{a_k}(a_k-x)^{p-1} \Psi(x+\delta_k)dx \\
&=\int_{r_k}^{a_k} (a_k-x)^{p-1}\frac{\Psi(x+\delta_k)}{\Psi(x)} \Psi(x)dx \ge \frac{\Psi(a_k+\delta_k)}{\Psi(a_k)}\int_{r_k}^{a_k} (a_k-x)^{p-1}\Psi(x)dx \\
&=\frac{\Psi(a_k+\delta_k)}{\Psi(a_k)} \int_{a_k}^{\mathfrak{b}_f} (x-a_k)^{p-1}\Psi(x)dx
\ge \frac{\Psi(a_k+\delta_k)}{\Psi(a_k)}\int_{a_k}^{\mathfrak{b}_f-\delta_k}(x-a_k)^{p-1}\Psi(x)dx\\
&\ge \int_{a_k}^{\mathfrak{b}_f-\delta_k}(x-a_k)^{p-1}\frac{\Psi(x+\delta_k)}{\Psi(x)} \Psi(x)dx
= \int_{a_k+\delta_k}^{\mathfrak{b}_f}(x-\widetilde a_k)^{p-1} \Psi(x)dx\\
&=\int_{\widetilde a_k}^{\mathfrak{b}_f}(x-\widetilde a_k)^{p-1}\Psi(x)dx
\end{align*}
It follows that all the inequalities are in fact equalities and then the following properties hold:
\begin{enumerate}
    \item[(d)] $\mathfrak{b}_f=\infty$;
    \item[(e)] $\eta_k=\delta_k$;
    \item[(f)] $\forall y \in (r_k,\infty)\ \ \frac{\Psi(y+\delta_k)}{\Psi(y)}=\frac{\Psi(a_k+\delta_k)}{\Psi(a_k)}$.
\end{enumerate}
We notice that $(e)$ implies that all inequalities in $(a)$ are equalities and then $(c)$ holds. This together with $(d)$ and $(f)$ implies that
if $~\widetilde g$ is a minimizer then
\begin{enumerate}
    \item[(g)] $\delta_{k}=\eta_{k}=\delta_{k-1}=\eta_{k-1}=...=\eta_2=\delta_1=s$;
    \item[(h)] $\mathfrak{a}_f=-\infty, \mathfrak{b}_f=\infty$ and for all $i\in \{1,...,k\}$ it holds
    $$
    \forall y \in (r_i,r_{i+1})\ \ \frac{\Psi(y+s)}{\Psi(y)}=\frac{\Psi(a_i+s)}{\Psi(a_i)}.
    $$
\end{enumerate}

\underline{Step 3:} Conclusion. Clearly under (H1) or (H2) the function $\widetilde g$ cannot be a minimizer.
It remains to consider that (H3) holds. From $(h)$ and the continuity of $\Psi$ it holds that
$$
\forall y\in (-\infty,\infty)\ \ \frac{\Psi(y+s)}{\Psi(y)}=\frac{\Psi(a_1+s)}{\Psi(a_1)}=C\in (0,\infty)
$$
Iterating this equality we have 
$\Psi(y+2s)=\frac{\Psi(y+2s)}{\Psi(y+s)}\frac{\Psi(y+s)}{\Psi(y)}\Psi(y)=C^2\Psi(y)$, and then for all $n\in \ZZ$ and all $y$
$$
\Psi(y+ns)=C^n\Psi(y).
$$
Then, if $C\ge 1$, we have
$$
C^n\int_0^{1} \Psi(y)\ dy=\int_0^{1} \Psi(y+ns)\ dy=\int_{ns}^{ns+1} \Psi(y) \ dy \underset{n\to\infty}{\longrightarrow} 0, 
$$
which is a contradiction. A similar contradiction is obtained if $C<1$, because 
$\Psi(y-ns)=C^{-n}\Psi(y)$ and then
$$
C^{-n}\int_{-1}^0\Psi(y)\ dy =\int_{-1}^{0} \Psi(y-ns)\ dy=\int_{-(1+ns)}^{-ns} \Psi(y) \ dy\underset{n\to\infty}{\longrightarrow} 0,
$$
proving that $\widetilde g$ cannot be a minimizer, and the result is shown.

\end{proof}

\begin{remark} Examples of distributions that satisfies the hypothesis of the previous Proposition are the exponential distribution 
$\mu_f(dx)=e^{-x} \ dx$ for $x\ge 0$,
the normal distribution $\mathcal N(0,1)$ and the uniform distribution $\mu_f(dx)=dx$ for $x\in[0,1]$. In the uniform case, we obtain an 
explicit solution for the minimizer
of $f\in L^p(\Omega,\F,\mu)$. For all $k\ge 1$ this unique minimizer is $g=\ell\circ f$, where
$$
\ell=\sum\limits_{i=1}^k \frac{2i-1}{2k}\ \ind_{[\frac{i-1}{k},\frac{i}{k})},
$$
independently of $p\in [1,\infty)$.
\end{remark}

\subsection{The case of an infinite measure, $p\in[1,\infty)$}
\label{sec:infinite measure}

The case of infinite measure needs an extra work and use some ideas already developed in the finite measure case.

\begin{theorem} 
\label{pro:minimum2} 
Let $(\Omega,\F,\mu)$ be an infinite measure space, $p\in [1,\infty)$ and $k\geq 1$. Then $\mathscr{G}_{p,k}$ is proximinal.

Moreover, if $f\in L^p(\Omega,\F,\mu)$ and $g=\sum\limits_{i=1}^q b_i \ind_{A_i}\in P_{\mathscr{G}_{p,k}}(f)$ is a minimizer, with 
$q\le k$, $-\infty<b_1<...<b_q<\infty$, $\{A_i\}_{1\leq i\leq q}$ a partition of $~\Omega$ 
such that $\mu(A_i)>0$ for all $i\in\{1,...,q\}$ and a unique 
$1\le s\le q$ such that $b_s=0$. Then, there exists a minimizer $\widetilde g\in P_{\mathscr{G}_{p,q}}(f)$ in $f$-special form
$$
\widetilde g=\sum\limits_{i=1, i\neq s}^q \M_p(f,f^{-1}(C_i)) \ind_{f^{-1}(C_i)}+0\ind_{f^{-1}(C_s)},
$$
where 
\begin{itemize}
    \item $r_1=-\infty, r_{q+1}=\infty$ and $r_i=\frac{b_{i-1}+b_{i}}{2}$ for all $i\in\{2,...,q\}$ (notice that $r_s<0<r_{s+1}$);
    \item $C_i=f^{-1}([r_i,r_{i+1}))$ for $i\in\{1,...,q-1\}$ and $C_q=f^{-1}([r_q,r_{q+1}])$;
    \item if $\mu(f^{-1}(C_i))>0$ and $i\neq s$, then $b_i$ is a $p$-th mean of $f$ on $f^{-1}(C_i)$;
\end{itemize}
If $q$ is the smallest among all minimizers, then $\mu(C_i)>0$ for all $i$.
\end{theorem}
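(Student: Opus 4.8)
The plan is to transport the problem to the push-forward $\mu_f$ on $(\RR,\B)$ and there imitate the two-step scheme of the finite case: first extract a uniformly bounded minimizing sequence (as in Lemma \ref{unif_bounded}), then let the level values and thresholds converge and recognise the limit as a minimizer in $f$-special form (as in Theorem \ref{pro:minimum}). Since every $h\in\mathscr{G}_{p,k}$ equals $\ell\circ f$ for a Borel step function $\ell$ with at most $k$ values, $\|f-h\|_p^p=\int_\RR|y-\ell(y)|^p\,d\mu_f(y)$, so it suffices to approximate $\id$ in $L^p(\RR,\B,\mu_f)$. Two facts replace the finiteness of $\mu$. First, $\nu(A):=\int_A|f|^p\,d\mu$ is a finite measure, absolutely continuous with respect to $\mu$. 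Second, $\{|f|\ge\delta\}$ has finite measure for every $\delta>0$; hence a level $f^{-1}(I)$ with $I$ bounded away from $0$ has finite measure, and, as observed after Definition \ref{def:special}, a level carrying a \emph{nonzero} $p$-th mean is necessarily of finite measure, since on a set of infinite measure $a\mapsto\int|f-a|^p\,d\mu$ is finite only at $a=0$. Thus the unique level that may be of infinite measure is forced to carry the value $0$.

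For existence I build the bounded minimizing sequence. Starting from any minimizing sequence, I replace, as in Lemma \ref{unif_bounded}, each value on a finite-measure level by a $p$-th mean (which does not increase the distance) and keep the value $0$ on the infinite-measure level, so each term is $\tilde h_n=\sum_i\M_p(f,A_{i,n})\ind_{A_{i,n}}$ with a constant number $m\le k$ of levels after passing to a subsequence. Here I book-keep with $\nu$ rather than with $\mu$: the energy vector $(\nu(A_{i,n}))_i$ lies in $[0,\|f\|_p^p]^m$, so along a subsequence $\nu(A_{i,n})\to w_i$. Every level with $w_i=0$ is merged into the zero level; replacing its value by $0$ costs at most $\nu(A_{i,n})$, and the total extra cost $\nu(B_n)=\sum_{w_i=0}\nu(A_{i,n})\to0$ keeps the sequence minimizing, just as in \eqref{eq:newg_n}. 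For a surviving level ($w_i>0$) the $\e$--$\delta$ form of absolute continuity of $\nu$ with respect to $\mu$ produces a $\delta>0$ with $\mu(A_{i,n})\ge\delta$ for all large $n$ (otherwise $\nu(A_{i,n})\to0$); hence by \eqref{eq:boundC} the values $|\M_p(f,A_{i,n})|\le 2\|f\|_p\,\mu(A_{i,n})^{-1/p}\le 2\|f\|_p\,\delta^{-1/p}$ are uniformly bounded. This is the infinite-measure analogue of Lemma \ref{unif_bounded}.

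Now I pass to the limit as in Theorem \ref{pro:minimum}. Along a further subsequence the ordered values converge, $b_{i,n}\to b_i$, and the distinct limits $z_1<\cdots<z_l$ (with $l\le k$) necessarily include $0$: it is the value on the infinite-measure level and the common limit of every surviving level whose measure tends to $\infty$ (on such a level $\M_p(f,A_{i,n})\to0$ by the bound above). Setting $r_j$ equal to the midpoints, $I_j=[r_j,r_{j+1})$ and $g=\sum_j z_j\ind_{f^{-1}(I_j)}$, the nonzero levels of $g$ have intervals bounded away from $0$, hence finite measure, while the value $0$ sits on the interval containing $0$, which carries the infinite measure; so $g\in\mathscr{G}_{p,k}$. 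The main obstacle is to prove $\|f-g\|_p\le\mathscr{D}_{p,k}(f)$: when $\mu(\Omega)<\infty$ one would replace $\tilde h_n$ by the constant-value sequence at cost $\max_i|b_{i,n}-b_i|\,\mu(\Omega)^{1/p}$, but here this quantity is infinite, and indeed a surviving level may \emph{escape} to infinite measure with a vanishing value. I handle this by lower semicontinuity on $(\RR,\B,\mu_f)$. Writing $\ell_n$ for the step function of $\tilde h_n$, one has $\ell_n(y)\to\ell(y)$ for $\mu_f$-a.e.\ $y$: for $y$ off the finitely many limit thresholds, $y$ eventually lies in an $n$-interval whose value converges to $\ell(y)$, the convergence of means being clear when the limit interval has finite $\mu_f$-measure and following, on the infinite-measure interval, from the fact that $\mu_f(I^{(n)})\to\infty$ forces $\M_p(\id,I^{(n)},\mu_f)\to0$ (otherwise $\int_{I^{(n)}}|y-\M_p|^p\,d\mu_f\ge(\eta/2)^p\,\mu_f(I^{(n)}\cap\{|y|\le\eta/2\})\to\infty$ would contradict $\int_{I^{(n)}}|y-\M_p|^p\,d\mu_f\le\|f\|_p^p$); at a threshold the two neighbouring values are equidistant, so the integrand converges there too. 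Fatou's lemma then gives $\|f-g\|_p^p=\int_\RR|y-\ell(y)|^p\,d\mu_f\le\liminf_n\int_\RR|y-\ell_n(y)|^p\,d\mu_f=\mathscr{D}_{p,k}(f)^p$, so $g\in P_{\mathscr{G}_{p,k}}(f)$.

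For the \emph{moreover} part, given a minimizer $g=\sum_{i=1}^q b_i\ind_{A_i}$ with $b_s=0$, I follow the reshuffling of Theorem \ref{pro:minimum} verbatim. Because $b_1<\cdots<b_q$ has a single vanishing value, the thresholds $r_i=\frac{b_{i-1}+b_i}{2}$ are all nonzero (indeed $r_s<0<r_{s+1}$), so each threshold set $f^{-1}(\{r_i\})$ lies in some $\{|f|\ge\delta\}$ and has finite measure; hence the modification \eqref{eq:especial_minimizer} moves only finite-measure pieces, and the minimality of $g$ together with the pointwise inequalities of Theorem \ref{pro:minimum} forces $\mu(\widetilde A_i\,\Delta\,C_i)=0$, i.e.\ $A_i=C_i$ up to null sets, where $C_i=f^{-1}([r_i,r_{i+1}))$. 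Replacing, for $i\neq s$, the value $b_i$ by the $p$-th mean of $f$ on $C_i$ (optimality, exactly as before) and retaining the value $0$ on $C_s$ yields the announced $\widetilde g$ in $f$-special form; for $i\neq s$ the interval $[r_i,r_{i+1})$ is bounded away from $0$, so $\mu(C_i)<\infty$ automatically and the $p$-th mean is well defined, while $C_s$ absorbs the infinite measure. Finally, if $q$ is minimal among all minimizers then no $C_i$ can be null, as in Theorem \ref{pro:minimum}. The sole genuinely new point over the finite-measure argument is the escape-of-mass phenomenon in the third paragraph, controlled by the semicontinuity step.
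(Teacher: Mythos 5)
Your proposal is sound in substance and reaches the result by a route that diverges from the paper's at the two technical pressure points. Where the paper lets the ordered values $a_{i,n}$ converge in $\overline\RR$ and then truncates the levels that escape to $\pm\infty$, you force boundedness upfront by replacing values with $p$-th means and book-keeping with the finite measure $\nu(A)=\int_A|f|^p\,d\mu$: compactness of the vector $(\nu(A_{i,n}))_i$ plus the $\e$--$\delta$ form of $\nu\ll\mu$ gives a uniform lower bound on $\mu(A_{i,n})$ for the surviving levels, hence a uniform bound on the means --- a clean extension of Lemma \ref{unif_bounded} to infinite measure. More significantly, for the limit passage the paper selects subsequences so that each threshold sequence is increasing or strictly decreasing, in order to decide whether each limit interval is open or closed at each endpoint and so obtain a.e.\ convergence of the indicators; it then splits the integral into a Fatou part (the zero level) and a dominated-convergence part (the remaining levels, trapped in a fixed finite-measure set). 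You instead observe that the integrand $|y-\ell_n(y)|^p$ converges pointwise on $\RR$ --- including at the limit thresholds, by equidistance of the two neighbouring limit values --- so a single application of Fatou on $(\RR,\B,\mu_f)$ suffices. This eliminates the endpoint bookkeeping and the splitting entirely, and is arguably the cleaner argument.

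Two points in the write-up need repair, both by material you already invoke. First, the opening claim that every $h\in\mathscr{G}_{p,k}$ equals $\ell\circ f$ is false as stated (take $f=0$ and $h$ a nonconstant indicator); what is true, and what you need, is that replacing an arbitrary partition by the $f$-preimages of the Voronoi cells of the values does not increase the distance, so the infimum may be computed over step functions of $f$ with \emph{interval} levels. Second, and relatedly, your $\tilde h_n$ in the second paragraph still lives on arbitrary sets $A_{i,n}$, so ``the step function of $\tilde h_n$'' in the third paragraph is not yet defined: after the values have been bounded and made convergent you must insert one more Voronoi reduction, replacing $\tilde h_n$ by $\sum_i b_{i,n}\ind_{f^{-1}(J_{i,n})}$ with $J_{i,n}$ the Voronoi cells of the $b_{i,n}$, so that $\ell_n$ has thresholds $\tfrac{b_{i-1,n}+b_{i,n}}{2}$ which converge. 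With that step made explicit the pointwise convergence and the Fatou argument are correct, $g=\ell\circ f$ lies in $\mathscr{G}_{p,k}$ because every nonzero Voronoi cell is bounded away from $0$ and hence has finite-measure preimage, and the ``moreover'' part follows the finite-measure reshuffling exactly as you say, since all thresholds $r_i$ are nonzero and the modified pieces have finite measure.
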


\begin{proof} For $k=1$ the result is obvious since $\mathscr{G}_{p,1}=\{0\}$. So for the rest of the proof we assume that $k\ge 2$.  

Let $f\in L^p(\Omega,\F,\mu)$ and consider a sequence $(g_n)_n\in \mathscr{G}_{p,k}$ such that 
$g_n=\sum_{i=1}^{q(n)} a_{i,n}\ind_{A_{i,n}}$, where $a_{1,n}<...<a_{q(n),n} \in \RR$, $\{A_{i,n}\}_{1\leq i\leq q(n)}$ is a measurable partition
with sets of positive measure and $q(n)\le k$ for all $n\in\mathbb N$, and such that 
$$
\|f-g_n\|_p \to \mathscr{D}_{p,k}(f).
$$
Since $g_n \in L^p(\Omega,\F,\mu)$ there exists a unique $1\le s(n)\le q(n)$ such that $a_{s(n),n}=0$ and we have that $\mu(A_{i,n})<\infty$ for
all $i\neq s(n)$. Passing to a subsequence, we can assume that $1\le s(n)=s\le q(n)=q\le k$. Define $r_1=r_{1,n}=-\infty$, $r_{q+1,n}=\infty$ and
$r_{i,n}=\frac{a_{i-1,n}+a_{i,n}}{2}$ for $i\in\{2,...,q\}$. 
We point out that if $q=1$, then $g_n=0$, for all $n$ and so
$h=0$ is a minimizer. Then for the rest of the proof, we assume $q\ge 2$.
\smallskip

Now, consider $I_{i,n}=[r_{i,n},r_{i+1,n})$ 
and the corresponding $C_{i,n}=f^{-1}(I_{i,n})$ for all $i\in\{1,...,q\}$. For all $n\in\mathbb N$, define $$\widetilde g_n=\sum_{i=1}^{q} a_{i,n}\ind_{f^{-1}(I_{i,n})}.$$ If $i,j\in\{1,...,q\}$, we have that $|f(x)-a_{i,n}|\geq |f(x)-a_{j,n}|$ for all $x\in C_j$. It follows that for all $n\in\mathbb N$
\begin{align*}
\|f-g_n\|_p^p&=\sum_{i=1}^q\int_{A_i}|f(x)-a_{i,n}|^pd\mu(x)=\sum_{j=1}^q\sum_{i=1}^q\int_{A_i\cap C_j}|f(x)-a_{i,n}|^pd\mu(x) \\
&\geq \sum_{j=1}^q\sum_{i=1}^q\int_{A_i\cap C_j}|f(x)-a_{j,n}|^pd\mu(x)\\
&=\sum_{j=1}^q \int_{C_j} |f(x)-a_{j,n}|^pd\mu(x)=\|f-\widetilde g_n\|_p^p
\end{align*}
proving that $(\widetilde g_n)_n$ is also a minimizing sequence.

For all $i\in\{1,...,q\}$, the sequence $(a_{i,n})_n$ has a convergent subsequence 
in $\overline \RR$.
Then we can also assume that $a_{i,n}\to a_i\in \overline \RR$ for all $i\in \{1,...,q\}$. We denote by $z_1<...<z_\ell$ the different
values in $\{a_1,...,a_q\}$, where $\ell\le q$. We point out that $z_t=0$ for some $1\le t\le \ell$.
For each $1\le m\le \ell$, we denote $L_m=\{i: \ 1\le i\le q \hbox{ and } a_i=z_m\}$. 
Each $L_m$ is an interval in $\NN$, because we have assumed $a_{1,n}<...< a_{i,n} <...< a_{q,n}$, for each $n$. 
We define $i_m^-=\min\{L_m\}$ and $i_m^+=\max\{L_m\}$ for all $m\in \{1,...,\ell\}$ and also $i_{\ell+1}^-=\ell+1$. Note that $L_m=\{i_m^-,...,i_m^+\}$ for all $m\in \{1,...,\ell\}$.

Assume that $z_{\ell}=\infty$ or $z_1=-\infty$. In this situation $\ell\ge 2$, because $z_t=0$.
As in the case of finite measure we can modify $(\widetilde g_n)_n$ to get a uniformly 
bounded minimizing sequence. 
Consider first the case $z_{\ell}=\infty$ and recall that
$i_\ell^-=\min\{L_{\ell}\}\in\{s+1,...,q\}$. Then, we have 
$$
r_{i_\ell^-,n}=\frac{a_{i_\ell^--1,n}+a_{i_\ell^-,n}}{2}\to \infty,
$$
because $a_{i_\ell^- -1,n}\ge a_{s,n}=0$ and then $a_{i_\ell^--1,n}\to a_{i_\ell^--1}=z_{\ell-1}\in [0,\infty)$.
Consider 
$$
\widehat g_n=\sum_{i< i_\ell^-} a_{i,n} \ind_{f^{-1}(I_{i,n})}+ a_{i_\ell^--1,n} \ind_{f^{-1}([r_{i_\ell^-,n},\infty))}=
\sum_{i< i_\ell^--1} a_{i,n} \ind_{f^{-1}(I_{i,n})}+ a_{i_\ell^--1,n} \ind_{f^{-1}([r_{i_\ell^--1,n},\infty))}
$$
An important fact is that $\int_{\{f\ge r_{i_\ell^-,n}\}} |f(x)-a_{i_\ell^--1,n}|^p\ d\mu(x)\to 0$,
because $(a_{i_\ell^--1,n})_n$ is a bounded sequence and $r_{i_\ell^-,n}\to \infty$.
Then
$$
\|f-\widehat g_{n}\|_p^p\le \|f-\widetilde g_{n}\|_p^p+\int_{\{f>r_{i_\ell^-,n}\}} \!\!\!|f(x)-a_{i_\ell^--1,n}|^p\ d\mu(x)
\to (\mathscr{D}_{p,k}(f))^p.
$$
Then, the sequence $(\widehat g_{n})_n$ is a minimizing sequence, which is uniformly upper bounded. Similarly, we can modify this sequence 
to get a minimizing sequence, which is uniformly bounded.
Then, in what follows we assume $(\widetilde g_n)_n$ is uniformly bounded and $-\infty< z_1, z_\ell<\infty$.
\smallskip

Now we consider 2 different cases.

\underline{Case 1:} $\ell=1$. In this situation $a_1=...=a_q=0$. Notice that $r_{q,n}=\frac {a_{q-1,n}+a_{q,n}}{2}\to 0$, so if $0<f(x)<\infty$, then 
$\widetilde g_n(x)=a_{q,n}$ for all large $n$ and  $\widetilde g_n(x)\to 0$. In the same way, if $f(x)<0$, then 
$\widetilde g_n(x)=a_{1,n}$ for all large $n$ and $\widetilde g_n(x)\to 0$. On the other hand, if $f(x)=0$, then
$\widetilde g_n(x)=0$. Then by Fatou's Lemma we conclude
$$
\liminf_n \|f-\widetilde g_n\|_p^p \ge \int \liminf_n |f(x)-\widetilde g_n|^p \ d\mu(x)=\|f\|_p^p,
$$
and we obtain $\mathscr{D}_{p,k}(f)\ge \|f\|_p$, showing that $h=0$ is a minimizer.

\underline{Case 2:} $\ell\ge  2$. For all $m\in\{1,...,\ell\}$ recall that $i_m^-=\min\{L_m\}$ and
$i_{\ell+1}^-=\ell+1$. Then, for all $2\le m\le \ell$
$$
r_{i_m^-,n}\to r_m:=\frac{z_{m-1}+z_m}{2}.
$$
and $r_1=-\infty<r_2<...<r_{\ell}<r_{\ell+1}:=\infty$. Now, we choose a particular subsequence $(n')_{n'}$. 
We start with $(r_{i_2^-,n})_n$. If there exist an
increasing subsequence of $(r_{i_2^-,n})_n$, we fix one of these subsequences as $(n^{(2)})$ and we put $T(2)=\text{in}$, for increasing. 
Otherwise we take $(n^{(2)})$ so that $(r_{i_2^-,n^{(2)}})_{n^{(2)}}$ is strictly decreasing, and we put $T(2)=\text{sd}$, for strictly decreasing.
We repeat this procedure for $(r_{i_3^-,n^{(2)}})_{n^{(2)}}$, to obtain, if possible,  $(n^{(3)})$ a subsequence of $(n^{(2)})$ so 
$(r_{i_3^-,n^{(3)}})_{n^{(3)}}$ is increasing, and put $T(3)=\text{in}$. Otherwise we take $(n^{(3)})$ a subsequence of $(n^{(2)})$ so that
$(r_{i_3^-,n^{(3)}})_{n^{(3)}}$ is strictly decreasing, and we put $T(3)=\text{sd}$. We continue until $m=\ell$. 
We also put $T(1)=\text{in}$ and $T(\ell+1)=\text{in}$. Denote by $(n')=(n^{(\ell)})$.

Now, we define the intervals that give a minimizer. For all $m\in\{1,...,\ell\}$ let
\begin{equation}
\label{eq:intervals}
I_m=\begin{cases} [r_m,r_{m+1})  &\hbox{if } T(m)=\text{in},\ T(m+1)=\text{in}\\
                  [r_m,r_{m+1}]  &\hbox{if } T(m)=\text{in},\ T(m+1)=\text{sd}\\
                  (r_m,r_{m+1})  &\hbox{if } T(m)=\text{sd},\ T(m+1)=\text{in}\\
                  (r_m,r_{m+1}]  &\hbox{if } T(m)=\text{sd},\ T(m+1)=\text{sd}\\
\end{cases}
\end{equation}
We notice that $\cup_{m=1}^{\ell} I_m=[-\infty,\infty)$, and for all $m\in\{1,...,\ell\}$ and all $n'$, we define 
$J_{m,n'}=\cup_{i\in L_m} [r_{i,n'},r_{i+1,n'})=[r_{i_m^-,n'},r_{i_{m+1}^-,n'})$. Then, it holds
$$
\ind_{f^{-1}(J_{m,n'})}\to \ind_{f^{-1}(I_m)}\ a.e.
$$
The last piece of information we need is that the set $\cup_{m\neq t} f^{-1}(J_{m,n'})$
is contained in a fixed set of finite measure $\widetilde A$ for large $n'$. 
If $t=\ell$, then 
$\cup_{m\neq t} f^{-1}(J_{m,n'})\subset f^{-1}((-\infty,r_{t,n'}])\subset 
\widetilde A=f^{-1}((-\infty,\frac{r_t}{2}])$, for large $n'$, because $r_{t,n'}\to r_t=\frac{z_{t-1}}{2}<z_t=0$, 
and then $\widetilde A$ has finite measure.
Similarly, if $t=1$, then $\cup_{m\neq t} f^{-1}(J_{m,n'})\subset f^{-1}([r_{2,n'},\infty])\subset \widetilde A=f^{-1}([\frac{r_2}{2},\infty))$,
for large $n'$. This set has finite measure because $r_2>0$. In the general case, $1<t<q$, we have for large $n'$
$$
\cup_{m\neq t} f^{-1}(J_{m,n'})\subset \widetilde A=f^{-1}((-\infty,r_t/2])\cup f^{-1}([r_{t+1}/2,\infty)),
$$
which has finite measure because $r_t<0<r_{t+1}$.

Now, consider the decomposition
$$
\begin{array}{ll}
\|f-\widetilde g_{n'}\|_p^p &\hspace{-0.2cm}=\int |f(x)-\widetilde g_{n'}(x)|^p\ind_{f^{-1}(J_{t,n'})} \ d\mu(x)
+\sum_{m\neq t}\int_{f^{-1}(J_{m,n'})} |f(x)-\widetilde g_{n'}(x)|^p \ d\mu(x).
\end{array}
$$
We use now Fatou's Lemma for the first term and the Dominated Convergence Theorem for the second term.
In the first term, we have the a.e. convergence
$$
|f-\widetilde g_{n'}|\ind_{f^{-1}(J_{t,n'})} \to |f|\ind_{f^{-1}(I_t)}
$$
With respect to the second term, for large $n'$, we have $\max_{1\le i\le q}|a_{i,n'}|\le\max\{|z_1|,z_{\ell}\}+1:=C$, also
$$
|f(x)-\widetilde g_{n'}(x)|^p\ \ind_{\cup_{m\neq t} f^{-1}(J_{m,n'})} 
\le 2^{p-1}\left(|f(x)|^p+C^p\right) \ind_{\widetilde A}\in L^1(\Omega,\F,\mu),
$$
$\widetilde g_{n'} \ind_{\cup_{m\neq t} f^{-1}(J_{m,n'})} \to \sum_{m\neq t} z_m \ind_{f^{-1}(I_m)}$ a.e.
and $f\ind_{\cup_{m\neq t} f^{-1}(J_{m,n'})} \to f \ind_{f^{-1}(\cup_{m\neq t}I_m)}$ a.e.
So, we get
$$
\liminf_{n'} \|f-\widetilde g_n\|_p^p \ge \int_{f^{-1}(I_t)} |f(x)|^p \ d\mu(x)+\sum_{m\neq t}\int_{f^{-1}(I_m)} |f(x)-z_m|^p \ d\mu(x)
$$
and then $h=\sum_{m=1}^{\ell} z_m\ind_{f^{-1}(I_m)}$ is a minimizer,
where the intervals $\{I_m\}_{1\leq m\leq t}$ are either open, closed or semi-closed, they are 
disjoint and $\cup_{m=1}^\ell I_m=\RR$ (see \eqref{eq:intervals}).

From here it is clear that a minimizer exists in $f$-special form as we have done in the finite measure case. Also notice that if $m\neq t$ and 
$0<\mu({f^{-1}(I_m)})$, we must have $z_m$ is a $p$-th mean for $f$ in $f^{-1}(I_m)$, since $h$ is a minimizer.
\end{proof}

\subsection{The case $p=\infty$}
\label{sec:case_p=infinity}
In this section we shall prove that $\mathscr{G}_{\infty,k}$ is proximinal. We start with a lemma.

\begin{lemma}
Let $(\Omega,\F,\mu)$ be a measure space and $f\in L^\infty(\Omega,\F,\mu)$. Then, for all $k\geq 1$, we have that $\mathscr{D}_{\infty,k}(f)=\eta_k(f)$ 
where 
$$
\eta_k(f)=\inf_{h=f\ a.e}\inf\{\alpha>0\ |\ h(\Omega)\ \text{can be 
covered by at most}\ k\ \text{closed balls of radius}\ \alpha\}.
$$
\end{lemma}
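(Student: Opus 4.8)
The plan is to prove the two inequalities $\mathscr{D}_{\infty,k}(f)\le\eta_k(f)$ and $\eta_k(f)\le\mathscr{D}_{\infty,k}(f)$ separately, each by an explicit construction passing between a near-optimal $k$-valued simple function and a covering of the range of a suitable representative of $f$. The elementary fact I would use throughout is that, for $g=\sum_{i=1}^{k}a_i\ind_{A_i}\in\mathscr{G}_{\infty,k}$ with $\{A_i\}$ a measurable partition, $\|f-g\|_\infty\le\beta$ if and only if for every $i$ with $\mu(A_i)>0$ one has $|f(x)-a_i|\le\beta$ for $\mu$-a.e. $x\in A_i$. Note that both quantities in the statement are finite, since $g=0\in\mathscr{G}_{\infty,k}$ already gives $\mathscr{D}_{\infty,k}(f)\le\|f\|_\infty<\infty$.

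For $\eta_k(f)\le\mathscr{D}_{\infty,k}(f)$, I would fix $\e>0$ and choose $g=\sum_{i=1}^k a_i\ind_{A_i}\in\mathscr{G}_{\infty,k}$ with $\beta:=\|f-g\|_\infty\le\mathscr{D}_{\infty,k}(f)+\e$. Letting $N=\{x:|f(x)-g(x)|>\beta\}$ be the associated $\mu$-null set, I define a representative $h$ of $f$ by $h=f$ on $\Omega\setminus N$ and $h=a_i$ on $A_i\cap N$, so that $h=f$ a.e. For $x\in A_i\setminus N$ we have $|h(x)-a_i|=|f(x)-a_i|\le\beta$, while for $x\in A_i\cap N$ trivially $|h(x)-a_i|=0$; hence the genuine range $h(\Omega)$ is covered by the $k$ closed balls $\overline{B}(a_i,\beta)$. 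Therefore $\eta_k(f)\le\beta\le\mathscr{D}_{\infty,k}(f)+\e$, and letting $\e\to0$ gives the inequality.

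For the reverse inequality $\mathscr{D}_{\infty,k}(f)\le\eta_k(f)$, I would take any measurable representative $h$ with $h=f$ a.e. and any $\alpha>0$ with $h(\Omega)\subseteq\bigcup_{i=1}^k\overline{B}(c_i,\alpha)$. Disjointifying, I set $A_i=h^{-1}(\overline{B}(c_i,\alpha))\setminus\bigcup_{j<i}h^{-1}(\overline{B}(c_j,\alpha))$; these are measurable, pairwise disjoint, and cover $\Omega$ because the balls cover $h(\Omega)$. Then $g=\sum_{i=1}^k c_i\ind_{A_i}\in\mathscr{G}_{\infty,k}$, and on each $A_i$ one has $|h(x)-c_i|\le\alpha$, so $\|f-g\|_\infty=\|h-g\|_\infty\le\alpha$. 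Thus $\mathscr{D}_{\infty,k}(f)\le\alpha$, and taking the infimum over admissible $\alpha$ and over all representatives $h$ yields $\mathscr{D}_{\infty,k}(f)\le\eta_k(f)$.

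I do not expect a serious obstacle: the lemma is essentially a dictionary between approximation in $L^\infty$ by a $k$-valued simple function and covering the essential range of $f$ by $k$ balls. The only points demanding care are the bookkeeping with representatives — in the first inequality one must modify $f$ on the null set $N$ precisely so that the actual range $h(\Omega)$ (not merely the essential range) is covered, and in the second inequality one must invoke measurability of $h$ to ensure the sets $A_i$ are measurable and that $g$ is a bona fide element of $\mathscr{G}_{\infty,k}$.
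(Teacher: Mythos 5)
Your proof is correct and follows essentially the same route as the paper: for one inequality you modify $f$ on the null set where $|f-g|$ exceeds the essential supremum so that the genuine range is covered by $k$ balls centered at the values of a near-optimal $g$, and for the other you disjointify the preimages of a near-optimal ball cover of $h(\Omega)$ to build the simple function. The only cosmetic difference is that the paper uses the piecewise essential suprema $\|(f-g)\ind_{A_i}\|_\infty$ instead of the global $\beta$ when defining the exceptional null sets, which changes nothing in the conclusion.
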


\begin{proof}
Let $\varepsilon>0$ and let $g\in\mathscr{G}_{\infty,k}$ such that $\|f-g\|_\infty\leq \mathscr{D}_{\infty,k}(f)+\varepsilon$. Write $g=\sum_{i=1}^ka_i\ind_{A_i}$ 
where $\{A_i\}_{1\leq i\leq k}$ is a partition of $\Omega$. For every $i$, the set $C_i=\{x\in A_i: \, |f(x)-a_i|>\|(f-g)\ind_{A_i}\|_\infty\}$ 
has measure $0$ and therefore $h=f\ind_{\Omega\setminus \cup_j C_j}+\sum\limits_{j=1}^k a_j\ind_{C_j}$ satisfies $h=f$ a.e. and 
$$
h(\Omega)\subset\bigcup_{i=1}^k[a_i-\mathscr{D}_{\infty,k}(f)-\varepsilon,a_i+\mathscr{D}_{\infty,k}(f)+\varepsilon].
$$ 
It follows that $\eta_k(f)\leq\mathscr{D}_{\infty,k}(f)+\varepsilon$ and since $\varepsilon$ is arbitrary, we obtain that $\eta_k(f)\leq\mathscr{D}_{\infty,k}(f)$.
To prove the other inequality, let again $\varepsilon>0$ and pick $l\leq k$, $a_1,...,a_l\in\mathbb R$ and $h=f$ a.e. such that 
$$
h(\Omega)\subset\bigcup_{i=1}^l[a_i-\eta_k(f)-\varepsilon,a_i+\eta_k(f)+\varepsilon].
$$ 
For $1\leq i\leq l$, define $A_i=h^{-1}([a_i-\eta_k(f)-\varepsilon,a_i+\eta_k(f)+\varepsilon])\in\F$. Now define 
$B_1=A_1$ and $B_i=A_i\setminus\bigcup_{j=1}^{i-1}A_i$ for $i\in\{2,...,l\}$. Then $\{B_i\}_{1\leq i\leq l}$ is a partition of $\Omega$. 
Defining $g=\sum_{i=1}^la_i\ind_{B_i}\in\mathscr{G}_{\infty,k}$, it is clear that $\|f-g\|_\infty\leq\eta_k(f)+\varepsilon$. 
It follows that $\mathscr{D}_{\infty,k}(f)\leq\eta_k(f)+\varepsilon$ and then $\mathscr{D}_{\infty,k}(f)\leq\eta_k(f)$.
\end{proof}

\begin{proposition}
\label{pro:minimum3} 
Let $(\Omega,\F,\mu)$ be a measure space. Then $\mathscr{G}_{\infty,k}$ is proximinal for all $k\ge 1$.
\end{proposition}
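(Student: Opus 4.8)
The plan is to reduce the problem to a finite-dimensional covering question using the preceding Lemma, which identifies $\mathscr{D}_{\infty,k}(f)$ with $\eta_k(f)$. Set $d=\mathscr{D}_{\infty,k}(f)=\eta_k(f)$. First I would introduce the essential range $K$ of $f$, that is, the smallest closed set with $f\in K$ a.e.; since $f\in L^\infty(\Omega,\F,\mu)$, $K$ is a nonempty compact subset of $\RR$. The key conceptual step is to rewrite $\eta_k(f)$ as
$$
\beta=\inf\{\alpha\ge 0:\ K \text{ is covered by at most } k \text{ closed balls of radius } \alpha\}.
$$
For $\eta_k(f)\ge\beta$, observe that whenever $h=f$ a.e. and $h(\Omega)$ is covered by $k$ closed balls, their union is closed and hence contains $\overline{h(\Omega)}$, which in turn contains $K$ (any closed set carrying $f$ a.e. contains the essential range); thus every radius admissible for $h$ is admissible for $K$. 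For $\eta_k(f)\le\beta$, I would exhibit a representative $h=f$ a.e. with $h(\Omega)\subseteq K$, obtained by redefining $f$ on the null set $\{f\notin K\}$ to equal a fixed point of $K$; then any covering of $K$ covers $h(\Omega)$.

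The main step, and the one I expect to be the genuine obstacle, is showing that the infimum $\beta$ is \emph{attained}: that $K$ can be covered by $k$ closed balls of radius exactly $d=\beta$. This is a compactness argument. Take radii $\alpha_n\downarrow\beta$ with centers $(c_1^{(n)},\dots,c_k^{(n)})$ whose balls cover $K$. Discarding any ball disjoint from $K$ (replacing its center by one already in use), I may assume each center satisfies $\mathrm{dist}(c_i^{(n)},K)\le\alpha_n\le\alpha_1$, so all centers lie in a fixed compact interval. Passing to a subsequence, $c_i^{(n)}\to c_i^*$ for every $i$. For any $y\in K$ and each $n$ there is an index with $|y-c_i^{(n)}|\le\alpha_n$; by pigeonhole one index $i$ works along a further subsequence, and letting $n\to\infty$ gives $|y-c_i^*|\le\beta$. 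Hence $K\subseteq\bigcup_{i=1}^k[c_i^*-\beta,c_i^*+\beta]$.

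Finally I would construct the minimizer from the centers $c_1,\dots,c_k$ satisfying $K\subseteq\bigcup_{i=1}^k[c_i-d,c_i+d]$. Put $\widetilde A_i=f^{-1}([c_i-d,c_i+d])$ and disjointify them into a measurable partition $\{A_i\}$ of $\Omega$ via $A_1=\widetilde A_1\cup(\Omega\setminus\bigcup_j\widetilde A_j)$ and $A_i=\widetilde A_i\setminus\bigcup_{j<i}\widetilde A_j$ for $i\ge 2$, thereby absorbing the null set $\{f\notin K\}$ into $A_1$. Then $g=\sum_{i=1}^k c_i\,\ind_{A_i}\in\mathscr{G}_{\infty,k}$, and for a.e.\ $x$ (where $f(x)\in K$, so $x\in A_i$ for some $i$ with $f(x)\in[c_i-d,c_i+d]$) one has $|f(x)-g(x)|\le d$. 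Therefore $\|f-g\|_\infty\le d=\mathscr{D}_{\infty,k}(f)$, which forces equality, so $g$ is a minimizer and $\mathscr{G}_{\infty,k}$ is proximinal. The routine verifications (measurability of the $A_i$, the fact that a finite union of closed balls is closed, and the irrelevance of null sets to the essential supremum) I would treat as straightforward.
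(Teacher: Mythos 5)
Your proof is correct. The one point worth double-checking is the existence and the two defining properties of the essential range $K$ (that $f\in K$ a.e.\ and that $K$ is contained in every closed set carrying $f$ a.e.) on an arbitrary, not necessarily $\sigma$-finite, measure space; this holds because $K^c$ is an open subset of $\RR$, hence a countable union of balls $B(y,\e_y)$ with $\mu(f^{-1}(B(y,\e_y)))=0$, so $\mu(f^{-1}(K^c))=0$. Granting that, every step goes through: the identification $\eta_k(f)=\beta$, the attainment of the covering radius of the compact set $K$ by compactness of the centers plus pigeonhole, and the construction of the minimizer.

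Your route differs from the paper's in a way that is worth noting. The paper works directly with a sequence of representatives $h_n=f$ a.e.\ whose images are covered by balls of radius $\eta_k(f)+\frac1n$; to extract a convergent subsequence of centers it must first prove the centers are bounded, which it does by an averaging argument (replacing each center by the mean of $h_n$ over the preimage of its ball, after first redefining $h_n$ so that every ball has a preimage of positive measure), and it must then intersect the countably many full-measure sets $\{f=h_n\}$ before running the pigeonhole argument. By passing to the essential range first, you decouple the measure theory from the covering problem: the boundedness of the centers is immediate (they lie within $\alpha_1$ of the compact set $K$ once balls disjoint from $K$ are discarded), only the single null set $\{f\notin K\}$ ever needs to be handled, and the attainment of the infimum becomes a purely finite-dimensional statement about covering a compact subset of $\RR$. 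The pigeonhole step at the heart of both arguments is the same, but your organization eliminates the two technical detours of the paper's proof, at the modest cost of establishing the basic properties of the essential range.
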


\begin{proof}
For all $n\in\mathbb N$, let $\alpha_n=\eta_k(f)+\frac{1}{n}$. So, for all $n\in\mathbb N$, there exist 
$a_1^n,...,a_{l_n}^n\in\mathbb R$ with $1\leq l_n\leq k$ 
and $h_n=f$ a.e. such that $h_n(\Omega)\subset\bigcup_{i=1}^{l_n}[a_i^n-\alpha_n,a_i^n+\alpha_n]$. Of course there exists $i_0$ such that
$\mu(h_n^{-1}([a_{i_0}^n-\alpha_n,a_{i_0}^n+\alpha_n]))>0$. If for some $i$ it holds $\mu(h^{-1}([a_i^n-\alpha_n,a_i^n+\alpha_n]))=0$, 
we can redefine $h_n$ on a set 
of measure $0$, to have $h_n(w)=a_{i_0}$ for all $w\in h_n^{-1}([a_i^n-\alpha_n,a_i^n+\alpha_n])$. So, we can assume for all $i$ it holds 
$\mu(h_n^{-1}([a_i^n-\alpha_n,a_i^n+\alpha_n]))>0$. Consider 
$$
t_{i,n}=\frac{1}{\mu(h_n^{-1}([a_i^n-\alpha_n,a_i^n+\alpha_n]))}\int_{h_n^{-1}([a_i^n-\alpha_n,a_i^n+
\alpha_n])} h_n(x)\ d\mu(x)\in [a_i^n-\alpha_n,a_i^n+\alpha_n],
$$
which obviously satisfies $|t_{i,n}|\le \|h_n\|_\infty=\|f\|_\infty$. Then, for all $i,n$, it holds
$$
|a_i^n|\le |t_{i,n}|+|a_i^n-t_{i,n}|\le \|f\|_\infty+\alpha_n\le \|f\|_\infty+\eta_k(f)+1,
$$
which implies that the set $\{a_i^n\ :\ 1\leq i\leq l_n \ n\in\mathbb N \}$ is bounded.

Considering a subsequence if necessary, we can suppose that 
$l_n=l\in\{1,...,k\}$ for all $n\in\mathbb N$. By compactness and taking a further subsequence, we can also assume 
that $a^n_i\to a_i$ for all $i\in\{1,...,l\}$. Define 
$$
C=\{\omega\in\Omega\ |\ \forall n\in\mathbb N\ f(\omega)=h_n(\omega)\}\in\F
$$ 
and note that $\mu(C^c)=0$. Let us show that $f(C)\subset\bigcup_{i=1}^{l}[a_i-\eta_k(f),a_i+\eta_k(f)]$. In fact, if 
$\omega\in C$ then for all $n\in\mathbb N$ there exists $i(\omega,n)\in\{1,...,l\}$ such that 
$f(\omega)=h_n(\omega)\in [a_{i(\omega,n)}^n-\alpha_n,a_{i(\omega,n)}^n+\alpha_n]$. There exists a subsequence $\phi(n)=\phi(n)(\omega)$ 
such that the sequence $(i(\omega,\phi(n)))_n$ is constant and equal to some $i_0(\omega)\in\{1,...,l\}$. 
It follows that $f(\omega)\in [a_{i_0(\omega)}-\eta_k(f),a_{i_0(\omega)}+\eta_k(f)]\subset\bigcup_{i=1}^{l}[a_i-\eta_k(f),a_i+\eta_k(f)]$. 
Define $h=f\ind_C+t\ind_{C^c}$ where $t$ is any real belonging to $\bigcup_{i=1}^{l}[a_i-\eta_k(f),a_i+\eta_k(f)]$. 
We have that $f=h$ a.e. and $h(\Omega)\subset\bigcup_{i=1}^{l}[a_i-\eta_k(f),a_i+\eta_k(f)]$. For $1\leq i\leq l$, 
define $A_i=h^{-1}([a_i-\eta_k(f),a_i+\eta_k(f)])\in\F$. Now define $B_1=A_1$ and $B_i=A_i\setminus\bigcup_{j=1}^{i-1}A_i$ for $i\in\{2,...,l\}$. 
Then $\{B_i\}_{1\leq i\leq l}$ is a partition of $\Omega$. Defining $g=\sum_{i=1}^l a_i\ind_{B_i}\in\mathscr{G}_{\infty,k}$, 
it is clear that $\|f-g\|_\infty=\|h-g\|_\infty\leq\eta_k(f)$. Moreover, we have that $\mathscr{D}_{\infty,k}(f)=\eta_k(f)$ 
by the previous Lemma and so we conclude that $\|f-g\|_\infty=\mathscr{D}_{\infty,k}(f)$.

\end{proof}

\subsection{Extra properties of minimizers and the sets $(\mathscr{G}_{p,k})_{p,k}$}
\label{sec:extra_properties}
In this section we include some extra properties of the sets $(\mathscr{G}_{p,k})_{p,k}$ as well as some natural questions 
like uniqueness of minimizers and the existence of a continuous selection for $P_{\mathscr{G}_{p,k}}$.

Let us start by proving that $\mathscr{G}_{p,k}$ is a closed set, for all $p\ge 1, k\ge 1$, something that it is not 
straightforward to do. Nevertheless, this is 
a direct consequence of the previous results.

\begin{corollary} Let $(\Omega,\F,\mu)$ be a measure space, $p\in [1,\infty]$ and $k\ge 1$. Then
$\mathscr{G}_{p,k}$ is closed.
\end{corollary}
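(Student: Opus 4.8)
The plan is to deduce closedness directly from proximinality, which has already been established for all $p\in[1,\infty]$ (Theorem \ref{mainresult}, proved through Theorems \ref{pro:minimum}, \ref{pro:minimum2} and Proposition \ref{pro:minimum3}). The key observation is that a proximinal set must contain every point whose distance to it is zero, and every point of the closure has distance zero by definition. So the entire argument reduces to unwinding these two facts.

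Concretely, I would fix $p\in[1,\infty]$ and $k\ge 1$ and take an arbitrary $f\in\overline{\mathscr{G}_{p,k}}$. First I would show that $\mathscr{D}_{p,k}(f)=0$: choosing a sequence $(f_n)_n\subset\mathscr{G}_{p,k}$ with $\|f-f_n\|_p\to 0$, one has $\mathscr{D}_{p,k}(f)=\inf\{\|f-h\|_p:\ h\in\mathscr{G}_{p,k}\}\le \|f-f_n\|_p$ for every $n$, and letting $n\to\infty$ forces $\mathscr{D}_{p,k}(f)=0$. Next I would invoke proximinality: since $\mathscr{G}_{p,k}$ is proximinal, $P_{\mathscr{G}_{p,k}}(f)$ is nonempty, so there exists $g\in\mathscr{G}_{p,k}$ with $\|f-g\|_p=\mathscr{D}_{p,k}(f)=0$. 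Because elements of $L^p(\Omega,\F,\mu)$ are identified up to $\mu$-null sets, $\|f-g\|_p=0$ means $f=g$ in $L^p$, hence $f\in\mathscr{G}_{p,k}$. This yields $\overline{\mathscr{G}_{p,k}}\subseteq\mathscr{G}_{p,k}$, i.e. $\mathscr{G}_{p,k}$ is closed.

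There is essentially no hard step here; the only subtlety worth flagging is conceptual rather than technical, namely that one must use the fact that $\mathscr{G}_{p,k}$ is proximinal as an \emph{a priori} nonclosed set — it is precisely the existence of an honest minimizer (and not merely a minimizing sequence) that converts ``distance zero'' into ``membership''. This is what makes the constructive existence results of the previous subsections do genuine work: without proximinality, distance zero would only place $f$ in the closure, which is what we are trying to compute. Since the argument uses only the definition of $\mathscr{D}_{p,k}$ and the nonemptiness of the metric projection, it applies uniformly to every $p\in[1,\infty]$ and every $k\ge 1$, with no case distinction between finite, $\sigma$-finite, and general measures, nor between $p<\infty$ and $p=\infty$.
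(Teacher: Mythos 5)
Your argument is correct and is exactly the paper's own proof: take $f$ in the closure, observe $\mathscr{D}_{p,k}(f)=0$, and use proximinality (Theorems \ref{pro:minimum}, \ref{pro:minimum2} and Proposition \ref{pro:minimum3}) to produce an actual minimizer $g\in\mathscr{G}_{p,k}$ with $\|f-g\|_p=0$, hence $f=g$ a.e. No differences worth noting.
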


\begin{proof} Assume $(g_n)_n\subset \mathscr{G}_{p,k}$ converges in $L^p(\Omega,\F,\mu)$ to
$g$. Then 
$$
\inf\{\|g-h\|_p: \ h\in \mathscr{G}_{p,k}\}=0.
$$
From the previous results, there exists a minimizer $\bar h\in \mathscr{G}_{p,k}$, 
that is $g=\bar h$ a.e. and the result is shown.
\end{proof}

A question that appears when proving the existence of minimizers is the following. Assume there exists a best approximation of $f$ 
by an element of $\mathscr{G}_{p,k}$ which is in fact an element of $\mathscr{G}_{p,m}$ for some $m<k$, then it is natural to 
think that $f$ should belong to $\mathscr{G}_{p,m}$.
This is true when $p\in[1,\infty)$ and it is not true for $p=\infty$. Before doing that we require the following lemma.

\begin{lemma}
Assume that $f\in L^p(\Omega,\F,\mu)$, for $1\le p<\infty$, and $A=f^{-1}(I)$ is a set of positive and finite measure, where
$I$ is an interval. Assume $b$ is a $p$-th mean of $f$ on $A$, then $b\in \bar I$.
\end{lemma}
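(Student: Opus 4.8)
The plan is to exploit the fact that, on $A=f^{-1}(I)$, the function $f$ takes values only inside $I$, hence inside the closed interval $\bar I$, together with the defining property that a $p$-th mean minimizes $a\mapsto \int_A |f(x)-a|^p\,d\mu(x)$. Writing $\bar I=[c,d]$ with $-\infty\le c\le d\le\infty$, the idea is that if a candidate mean lay strictly to one side of $\bar I$, then replacing it by the nearest endpoint would strictly decrease the integrand at every point of $A$, contradicting minimality.

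First I would record the elementary but crucial monotonicity fact that, for every $p\ge 1$, the map $t\mapsto t^p$ is strictly increasing on $[0,\infty)$, so that $0\le u<v$ implies $u^p<v^p$; this is what lets me treat $p=1$ and $p>1$ on equal footing. I would also note that, since $f\in L^p(\Omega,\F,\mu)$ and $\mu(A)<\infty$, each endpoint value gives a finite integral $\int_A|f-c|^p\,d\mu<\infty$, so it is a legitimate competitor in the minimization.

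Next, arguing by contradiction, suppose $b\notin \bar I$; by symmetry assume $b<c$, which forces $c>-\infty$. For every $x\in A$ we have $f(x)\in I$, hence $f(x)\ge c>b$, so that $0\le f(x)-c<f(x)-b$ and therefore $|f(x)-c|^p<|f(x)-b|^p$ by the monotonicity above. Integrating over $A$, whose measure is strictly positive, yields $\int_A|f-c|^p\,d\mu<\int_A|f-b|^p\,d\mu$, contradicting the fact that $b$ minimizes $a\mapsto\int_A|f-a|^p\,d\mu$. The case $b>d$ is handled identically using the endpoint $d$, which completes the inclusion $b\in\bar I$.

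The only real subtlety, and the step I would treat with care, concerns the case $p=1$: there $\M_1(f,A)$ is defined as the midpoint $\tfrac{a^*+b^*}{2}$ of the whole interval $[a^*,b^*]$ of minimizers rather than as a single minimizer. The contradiction argument above in fact shows that \emph{every} minimizer of $a\mapsto\int_A|f-a|\,d\mu$ lies in $\bar I$; since $\bar I$ is convex and the median is a convex combination of the two minimizers $a^*,b^*\in\bar I$, it too lies in $\bar I$. For $p>1$ the mean is the unique minimizer, so no such remark is needed.
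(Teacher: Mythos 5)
Your proof is correct and follows essentially the same route as the paper's: assume $b$ lies strictly outside $\bar I=[c,d]$, use that $f$ takes values in $I$ on $A$ to get the pointwise strict inequality $|f(x)-c|^p<|f(x)-b|^p$ (resp.\ with $d$), and integrate over the positive-measure set $A$ to contradict minimality. Your extra remark on the $p=1$ median is a harmless refinement the paper does not need to make explicit, since the argument already shows every minimizer lies in $\bar I$.
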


\begin{proof} 
Assume the interval $\bar I=[c,d]$, where $c,d\in \overline\RR$ and let us prove that $b\geq c$. If $c=-\infty$ it is clear that $c<b$. 
So assume $c$ is finite. By contradiction, if $b<c$ we have $|f(x)-b|=f(x)-c+(c-b)>f(x)-c=|f(x)-c|$, for all $x\in f^{-1}(I)$
and then, since $\mu(f^{-1}(I))>0$, we get
$$
\int_{f^{-1}(I)} |f(x)-b|^p \ d\mu(x) > \int_{f^{-1}(I)} |f(x)-c|^p \ d\mu(x),
$$
which is contradiction. Similarly, it is shown that $b\le d$.
\end{proof}

\begin{proposition} 
\label{pro:f_inG_k}
Assume $f\in L^p(\Omega,\F,\mu)$ with $p\in [1,\infty)$. Let $m,k\in\mathbb N$ such that $1\leq m<k$. 
Suppose that there exists $g\in\mathscr{G}_{p,m}\cap P_{\mathscr{G}_{p,k}}(f)$. Then $f\in\mathscr{G}_{p,m}$.
\end{proposition}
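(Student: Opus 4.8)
The plan is to argue by contradiction, exploiting the fact that $g$ uses strictly fewer than $k$ values, so that there is always room to split one atom of $g$ into two pieces and strictly decrease the error. First I would record that $g\in P_{\mathscr{G}_{p,k}}(f)\cap\mathscr{G}_{p,m}$ with $m<k$ forces $\mathscr{D}_{p,m}(f)=\mathscr{D}_{p,k}(f)$, and hence $g\in P_{\mathscr{G}_{p,m}}(f)$ as well. Applying Theorem \ref{pro:minimum} (finite measure) or Theorem \ref{pro:minimum2} (infinite measure), I may replace $g$ by a minimizer in $f$-special form $\widetilde g=\sum_{i=1}^{q}a_i\ind_{A_i}$ with $q\le m<k$, all $A_i$ of positive measure, $a_1<\dots<a_q$, where $a_i$ is a $p$-th mean of $f$ on $A_i$ whenever $\mu(A_i)<\infty$, and $a_i=0$ whenever $\mu(A_i)=\infty$. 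Since it suffices to show $f=\widetilde g$ a.e. (which gives $f\in\mathscr{G}_{p,q}\subseteq\mathscr{G}_{p,m}$), I assume for contradiction that $\mu(\{f\neq\widetilde g\})>0$; then there is an index $i_0$ with $\mu(A_{i_0}\cap\{f\neq a_{i_0}\})>0$.

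The second step is to show that $f$ is not a.e. constant on $A_{i_0}$. If $f=c$ a.e. on $A_{i_0}$, then for $\mu(A_{i_0})<\infty$ the defining property of a $p$-th mean yields $a_{i_0}=c$ (uniqueness for $p>1$, and for $p=1$ the median of a constant equals that constant), while for $\mu(A_{i_0})=\infty$ integrability of $f$ forces $c=0=a_{i_0}$. In either case $f=a_{i_0}$ a.e. on $A_{i_0}$, contradicting the choice of $i_0$. Consequently the essential infimum and essential supremum of $f$ on $A_{i_0}$ differ, and since $a_{i_0}$ lies between them, at least one of them is strictly separated from $a_{i_0}$.

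The heart of the argument is the splitting of $A_{i_0}$. Choose a threshold $t$ strictly on the side of $a_{i_0}$ where $f$ has room: when $\mu(A_{i_0})<\infty$ take $t$ strictly between the essential infimum of $f$ on $A_{i_0}$ and $a_{i_0}$ (or, symmetrically, between $a_{i_0}$ and the essential supremum), and set $A'=A_{i_0}\cap\{f\le t\}$, $A''=A_{i_0}\cap\{f>t\}$, both of positive measure. When $\mu(A_{i_0})=\infty$ (so $a_{i_0}=0$), choose $t$ strictly between the essential infimum and $0$ if $\mu(\{f<0\}\cap A_{i_0})>0$, or symmetrically between $0$ and the essential supremum otherwise; this guarantees that the piece on which $f$ is bounded away from $0$ has finite measure, while the complementary piece keeps the value $0$ (which is admissible for an infinite-measure atom). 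The Lemma preceding this Proposition, applied to the interval cut out by the finite-measure piece $A'$, gives $\M_p(f,A')\le t<a_{i_0}$; since the set of minimizers of $c\mapsto\int_{A'}|f-c|^p\,d\mu$ lies at or below the essential supremum of $f$ on $A'$, hence at or below $t$, the value $a_{i_0}$ is strictly suboptimal there and $\int_{A'}|f-\M_p(f,A')|^p\,d\mu<\int_{A'}|f-a_{i_0}|^p\,d\mu$. Replacing $a_{i_0}\ind_{A_{i_0}}$ by $\M_p(f,A')\ind_{A'}+a_{i_0}\ind_{A''}$ produces an element of $\mathscr{G}_{p,q+1}\subseteq\mathscr{G}_{p,k}$ that is strictly closer to $f$, contradicting minimality of $\widetilde g$. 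Hence $f=\widetilde g$ a.e., so $f\in\mathscr{G}_{p,m}$.

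I expect the \textbf{main obstacle} to be ensuring the strict decrease uniformly across the cases, in particular reconciling the $p=1$ situation, where the $p$-th mean is only defined up to an interval, with the infinite-measure bookkeeping, where the value on the leftover atom is pinned to $0$ rather than freely optimized. Both are controlled by the same mechanism: the preceding Lemma forces the optimal value on the split-off piece to stay on the correct side of the threshold $t$, and the choice of $t$ strictly separated from $a_{i_0}$ makes the inequality strict regardless of whether the relevant extreme median equals $t$ or lies below it.
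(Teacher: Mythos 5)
Your proof is correct and follows essentially the same route as the paper's: reduce to a minimizer in $f$-special form, argue by contradiction, split one cell of that minimizer along a level set of $f$ into two pieces (invoking the preceding Lemma on the location of $p$-th means to make the improvement strict, and the integrability of $f$ to keep the split-off piece of finite measure when $\mu$ is infinite), and contradict minimality in $\mathscr{G}_{p,k}$. The only difference is cosmetic: the paper splits off $f^{-1}(J)$ for a closed bounded interval $J$ contained in the complement of the value set $\{b_1,\dots,b_r\}$, while you split at a threshold $t$ chosen strictly between $a_{i_0}$ and an essential extreme of $f$ on the selected cell.
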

\begin{proof} 
Suppose that the measure is finite. We can assume that 
$$
g=\sum\limits_{i=1}^r b_i \ind_{f^{-1}(I_i)},
$$ 
where $r\leq m$, $\{I_i\}_{1\leq i\leq r}$ is a 
family of disjoint intervals such that $\{f^{-1}(I_i)\}_{1\leq i\leq r}$ is a partition of $\Omega$ and $b_1<...<b_r$. Suppose by contradiction that $f\notin\mathscr{G}_{p,m}$. Then in particular it holds that $\mu(f^{-1}(\{b_1,...,b_r\}^c)>0$. Since $\Theta=\{b_1,...,b_r\}^c$ is open, it is a 
countable union of open intervals $(J_n)_n$ and therefore
for some $n_0$ we should have $\mu(f^{-1}(J_{n_0}))>0$. By the continuity of the measure, there exists a closed bounded interval 
$J\subset J_{n_0}$ such that $\mu(f^{-1}(J))>0$, and 
therefore $\mu(f^{-1}(J\cap I_{i_0}))>0$, for some $i_0$. Hence, we obtain 
$$
\int_{f^{-1}(I_{i_0}\cap J)} |f-\M_p(f,f^{-1}(I_{i_0}\cap J))|^p \ d\mu(x) <
\int_{f^{-1}(I_{i_0}\cap J)} |f-b_{i_0}|^p \ d\mu(x),
$$ 
since an equality in the previous formula would imply that $b_{i_0}\in\overline{I_{i_0}\cap J}\subset J\subset \{b_1,...,b_r\}^c$, 
by the previous Lemma. If we define 
$$
h=\sum\limits_{i=1,i\neq i_0}^r b_i \ind_{f^{-1}(I_i)}+b_{i_0}\ind_{f^{-1}(I_{i_0}\cap J^c)}+\M_p(f,f^{-1}(I_{i_0}\cap J))
\ind_{f^{-1}(I_{i_0}\cap J)}\in\mathscr{G}_{p,r+1}\subset\mathscr{G}_{p,k},
$$ 
we have that $\|f-h\|_p<\|f-g\|_p$ which contradicts the minimality of $g$. We conclude that $f\in\mathscr{G}_{p,m}$.

In case the measure is infinite, with the same notation as above, we know that $b_{i_1}=0$ for some $i_1$. As above there 
exists a closed and bounded interval $J\subset \{b_1,...,b_r\}^c \subset \{0\}^c$, such that $\mu(f^{-1}(J))>0$. 
Without loss of generality we can assume that $J\subset [a,\infty)$, for some $a>0$. Then
$$
\mu(f^{-1}(J)) a^p\le \|f\|^p,
$$
proving that $f^{-1}(J)$ has finite and positive measure. The argument now goes as in the case of finite measure.
\end{proof}

The following result shows that, for $p\in [1,\infty)$, the error in the approximation by functions in $\mathscr{G}_{p,k}$  
decreases strictly with $k$ until eventually reaching zero.

\begin{corollary} Assume that $f\in L^p(\Omega,\F,\mu)$ with $p\in [1,\infty)$ and consider $\mathscr{D}_{p,\infty}(f)=0$.
Define $k^*=\min\{k:\ \mathscr{D}_{p,k}(f)=0\}\in [1,\infty]$. Then,
$(\mathscr{D}_{p,k}(f))_{k\le k^*}$ is strictly decreasing and $\mathscr{D}_{p,k}(f)=0$ for all $k\ge k^*$, that is 
$$
k^*=\min\{k:\ \mathscr{D}_{p,k+1}(f)=\mathscr{D}_{p,k}(f)\}=\min\{k:\ \mathscr{D}_{p,k}(f)=0\}.
$$
\end{corollary}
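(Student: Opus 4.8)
The plan is to establish three facts about the non-negative sequence $(\mathscr{D}_{p,k}(f))_{k\ge 1}$: that it is non-increasing, that it vanishes from $k^*$ onward, and that it strictly decreases before $k^*$. The first is immediate from the inclusion $\mathscr{G}_{p,k}\subset\mathscr{G}_{p,k+1}$, which gives $\mathscr{D}_{p,k+1}(f)\le\mathscr{D}_{p,k}(f)$. The second is then purely formal: by the definition of $k^*$ and monotonicity, $0\le\mathscr{D}_{p,k}(f)\le\mathscr{D}_{p,k^*}(f)=0$ for every $k\ge k^*$. All the content therefore lies in the strict decrease, i.e. in proving that $\mathscr{D}_{p,k}(f)>0$ implies $\mathscr{D}_{p,k+1}(f)<\mathscr{D}_{p,k}(f)$; once this is shown, both displayed minima equal $k^*$, since for $k<k^*$ one has $\mathscr{D}_{p,k}(f)>0$ and hence a strict drop, while at $k=k^*$ both terms are $0$.

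For the strict decrease, fix $k<k^*$ and invoke Theorem \ref{pro:minimum} (finite measure) or Theorem \ref{pro:minimum2} (infinite measure) to pick a minimizer in $f$-special form $g=\sum_{i=1}^q a_i\ind_{C_i}$ with $q\le k$ as small as possible, so that $\mu(C_i)>0$ for all $i$ and $a_i=\M_p(f,C_i)$. If $q<k$ then $g\in\mathscr{G}_{p,q}\cap P_{\mathscr{G}_{p,k}}(f)$ and Proposition \ref{pro:f_inG_k} forces $f\in\mathscr{G}_{p,q}$, whence $\mathscr{D}_{p,k}(f)=0$, a contradiction; so $q=k$. Since $\mathscr{D}_{p,k}(f)=\|f-g\|_p>0$, there is a cell $C_{i_0}=f^{-1}(I_{i_0})$ on which $\int_{C_{i_0}}|f-a_{i_0}|^p\,d\mu>0$, and consequently $f$ is not a.e.\ constant on $C_{i_0}$; in particular its essential infimum and supremum over $C_{i_0}$ differ.

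The idea is now to split this single cell in two and do strictly better, producing a competitor in $\mathscr{G}_{p,k+1}$. Choose a level $c$ strictly between the essential infimum and supremum of $f$ on $C_{i_0}$, so that $D_1=C_{i_0}\cap\{f<c\}=f^{-1}(I_{i_0}\cap(-\infty,c))$ and $D_2=C_{i_0}\cap\{f\ge c\}=f^{-1}(I_{i_0}\cap[c,\infty))$ both have positive measure, set $m_j=\M_p(f,D_j)$, and let
$$
h=\sum_{i\ne i_0}a_i\ind_{C_i}+m_1\ind_{D_1}+m_2\ind_{D_2}\in\mathscr{G}_{p,k+1}.
$$
By optimality of the $p$-th means on $D_1$ and $D_2$ one has $\|f-h\|_p^p\le\|f-g\|_p^p$, with equality only if $a_{i_0}$ is simultaneously a $p$-th mean of $f$ on $D_1$ and on $D_2$. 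The main point, and the step requiring the most care, is to exclude this equality. Using the lemma that a $p$-th mean lies in the closure of the relevant interval, $m_1\le c\le m_2$; for $p>1$ the strict positivity of the mass on either side of $c$ pushes $m_1<c<m_2$, so $m_1\ne m_2$ and, by uniqueness of the $p$-th mean, $a_{i_0}$ cannot equal both. For $p=1$, where medians are not unique, I would instead choose $c$ on whichever side of $a_{i_0}$ the essential range genuinely extends (at least one exists by non-constancy): then the essential supremum of $f$ on $D_1$ is $<a_{i_0}$ (or the essential infimum on $D_2$ is $>a_{i_0}$), so $a_{i_0}$ fails to be a median there and the inequality on that piece is strict, the positive measure of the complementary piece being guaranteed by the median property of $a_{i_0}$ on $C_{i_0}$. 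In all cases $\|f-h\|_p<\|f-g\|_p=\mathscr{D}_{p,k}(f)$, giving $\mathscr{D}_{p,k+1}(f)<\mathscr{D}_{p,k}(f)$ and completing the proof.
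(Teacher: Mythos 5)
Your argument is correct in substance, but it takes a much longer road than the paper intends. The corollary is stated without proof immediately after Proposition \ref{pro:f_inG_k} because it is meant to follow from that proposition in two lines, used contrapositively: if $\mathscr{D}_{p,k+1}(f)=\mathscr{D}_{p,k}(f)$ for some $k$, take (by proximinality, Theorems \ref{pro:minimum} and \ref{pro:minimum2}) any $g\in P_{\mathscr{G}_{p,k}}(f)$; then $\|f-g\|_p=\mathscr{D}_{p,k}(f)=\mathscr{D}_{p,k+1}(f)$ and $g\in\mathscr{G}_{p,k}\subset\mathscr{G}_{p,k+1}$, so $g\in\mathscr{G}_{p,k}\cap P_{\mathscr{G}_{p,k+1}}(f)$, and Proposition \ref{pro:f_inG_k} with $m=k$ gives $f\in\mathscr{G}_{p,k}$, i.e.\ $\mathscr{D}_{p,k}(f)=0$; combined with the monotonicity you noted, this is the whole statement. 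You instead invoke the proposition only to force $q=k$ for a minimal $f$-special minimizer, and then rebuild by hand the strict-improvement mechanism --- splitting a cell on which $f$ is not a.e.\ constant at a level $c$ and taking $p$-th means on the two halves --- which is exactly the engine already running inside the proof of Proposition \ref{pro:f_inG_k} (there the cell is cut by a compact interval $J$ avoiding the values $b_i$, here by a half-line). Your equality analysis ($m_1<c\le m_2$ for $p>1$; the location of medians for $p=1$) is sound, and the self-contained version does make the geometry of the strict drop explicit; the price is duplicated work. One loose end you should close: when $\mu(\Omega)=\infty$ the distinguished cell may be the infinite-measure one, where $a_{i_0}=0$ is not a $p$-th mean and where one of $D_1,D_2$ can again have infinite measure, so $\M_p(f,D_j)$ is not defined and the coefficient there must be $0$. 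The patch is to choose $c\neq 0$ on the side where the essential range of $f$ on $C_{i_0}$ leaves $0$; then the corresponding half has finite positive measure, $|f|\ge|c|>0$ there, so by your own lemma $0$ is not a $p$-th mean on it and the strict improvement comes from that half alone. This is routine, not a flaw in the approach.
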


The previous results are not true for $p=\infty$. In fact, we have the following example:

\begin{example}
Consider the Lebesgue measure in $[0,1]$, 
the function
$$
f(x)=\begin{cases} x   &\hbox{for } x\notin (\frac13,\frac23)\\
                \frac13 &\hbox{for } x\in (\frac13,\frac23)
\end{cases}
$$
and $k=3$. It is not difficult to show that 
$
\mathscr{D}_{\infty,3}(f)=\frac16,
$
where there are multiple minimizers, for example 
$$
h=\frac16\, \ind_{[0,\frac13]}+\frac12\, \ind_{(\frac13,\frac23)}+\frac56\, \ind_{[\frac23,1]}
$$
is a minimizer, but also 
$$
g=\frac16\, \ind_{[0,\frac23)}+\frac56\, \ind_{[\frac23,1]}\in \mathscr{G}_{\infty,2}
$$
is a minimizer, in particular $\mathscr{D}_{\infty,3}(f)=\mathscr{D}_{\infty,2}(f)>0$. Nevertheless, 
$f\notin \mathscr{G}_{\infty,k}$ for all $k$. This also shows that $\mathscr{G}_{\infty,3}$ is not Chebyschev.
\end{example}

In Proposition \ref{pro:uniqueness}, we have shown that under certain conditions on $f$, there exists a unique minimizer. 
An important question then is if $\mathscr{G}_{p,k}$ is Chebyschev, that is, if there is a unique minimizer for all $f$. As we have seen in 
the previous example this is not true for $p=\infty$, and we complement this for all $p$.

\begin{example}
Consider again the Lebesgue measure in $[0,1]$. Then $\mathscr{G}_{p,2}$ is not Chebyshev for any $p\in[1,\infty]$. To see that, let $f=-\ind_{[0,\frac13)}+0\ind_{[\frac13,\frac23)}+\ind_{[\frac23,1]}$. A possible
minimizer in $\mathscr{G}_{2,2}$ has the form $g=a\ind_{f^{-1}((-\infty,r_2))}+b\ind_{f^{-1}([r_2,\infty])}$, 
for suitable $a,b,r_2$ (see Theorem \ref{pro:minimum}). 
If $r_2\le -1$ or $r_2>1$, a candidate to be a minimizer is $g_1=0$. 
For $-1< r_2\le 0$ the candidate is $g_2=-\ind_{[0,\frac13)}+\frac12 \ind_{[\frac13,1]}$. 
Finally, for $0< r_2<1$ the candidate is
$g_3=-\frac12 \ind_{[0,\frac23)}+ \ind_{[\frac23,1]}$. The corresponding errors are 
$$
\|f-g_1\|_2^2=\frac23,\ \|f-g_2\|_2^2=\|f-g_3\|_2^2=\frac16,
$$
showing that $g_2$ and $g_3$ are two minimizers and then $\mathscr{G}_{2,2}$ is not Chebyschev. Finally, for every $p\in [1,\infty]$ 
both $g_2$ and $g_3$ are minimizers in $\mathscr{G}_{p,2}$, showing that this set is not Chebyschev for any $p$. Moreover,
for $1<p<\infty$, it can be proved that $g_2, g_3$ are the only minimizers. For $p=1$, there is a continuum of minimizers since 
$$
g_a=-\ind_{[0,\frac13)}+a\ind_{[\frac13,1]}
$$ 
is a minimizer for all $a\in[0,1]$. For $p=\infty$, there is also a continuum of 
minimizers since
$$
h_b=b\ind_{[0,\frac13)}+\frac12\ind_{[\frac13,1]}
$$
is a minimizer for all $b\in[\frac{-3}{2},\frac{-1}{2}]$.
\end{example}

\begin{remark}
We have proved that $\mathscr{G}_{p,k}$ is proximinal and closed for all 
$k\geq 1$ and $p\in [1,\infty]$. However, $\mathscr{G}_{p,k}$ is not Chebyshev in general as we have shown in the previous examples. 
Then, it is natural to ask if $P_{\mathscr{G}_{p,k}}$ admits a 
continuous selection. If such continuous selection exists, then $\mathscr{G}_{p,k}$ has to be almost-convex (see Lemma 5 in  \cite{Vlasov}).
Remember that a subset $K$ of a Banach space is said to be \textit{almost-convex} (see \cite{Vlasov}) if for every closed ball $B$ such that $K\cap B=\emptyset$, 
there exists a closed ball $B'$ of arbitrary large radius such that $K\cap B'=\emptyset$ and $B\subset B'$. If $p\in(1,\infty)$, 
a subset $K$ is almost-convex if and only if $K$ is convex (see Lemma 2 in  \cite{Vlasov}). So, the question is if $\mathscr{G}_{p,k}$
can be convex. For $k\ge 2$ and $p<\infty$, $\mathscr{G}_{p,k}$ is convex if and only if $L^p(\Omega,\F,\mu)$ is finite dimensional and
$L^p(\Omega,\F,\mu))=\mathscr{G}_{p,k}$. Indeed, 
assume $k\ge 2$ and that $\mathscr{G}_{p,k}$ is convex. Then it is direct to show that $\mathscr{G}_{p,k}$ is a vector space, because it is homogeneous. Then
$\mathscr{G}_{p,\ell}=\mathscr{G}_{p,k}$, for all $\ell\ge k$. This is done by induction, so the only interesting case 
is $\ell=k+1$. Take $g=\sum_{i=1}^{k+1} a_i\ind_{A_i}$, which
can be seen as the sum of three elements $g_1,g_2,g_3 \in \mathscr{G}_{p,k}$
$$
g_1=\sum\limits_{i=1}^{k-1} a_i \ind_{A_i}+0\ind_{A_{k}\cup A_{k+1}},\ g_2=a_k\ind_{A_k}+0\ind_{\cup_{j\neq k} A_j},
\ g_3=a_{k+1}\ind_{A_{k+1}}+0\ind_{\cup_{j\neq k+1} A_j}.
$$
Therefore, $\mathscr{G}_{p,k}=\cup_\ell \mathscr{G}_{p,l}$ is dense and closed in $L^p(\Omega,\F,\mu)$, 
which implies $\mathscr{G}_{p,k}=L^p(\Omega,\F,\mu)$. The conclusion is that
the unit ball of $L^p(\Omega,\F,\mu)$ is UA and then $L^p(\Omega,\F,\mu)$ is finite dimensional (see Theorem \ref{ballUA} in Section \ref{UballisUA}).

\end{remark}

\section{The $p$-variation}
\label{sec:pvariation}

In this part we introduce a new notion of variation for functions in $L^p(\Omega,\F,\mu)$. There are several notions of variation or oscillation for functions.
Our notion notion could be contrasted  with the definition of oscillation given in \cite{Bogachev} (p.296), which helps 
to characterize compact sets in $L^1$. However, both concepts are not comparable, in general. 

\begin{definition}
Let $p\in[1,\infty)$. For $f\in L^p(\Omega,\F,\mu)$ and $A$ a measurable set of finite measure, we define $var_p(f,A)$, 
the \textit{$p$-variation} of $f$ in $A$, as
$$
var_p(f,A)^p=\begin{cases} \frac{1}{\mu(A)} \int_{A\times A} |f(x)-f(y)|^p \, d\mu(x)d\mu(y) &\hbox{if } \mu(A)>0\\
                           0  &\hbox{otherwise}.
              \end{cases}               
$$
Given $\mathcal{P}=(A_i)_i$, a finite collection of disjoint measurable sets each one
of finite measure, which we also assume
it contains at least one set of positive measure,
we define the total $p$-variation of $f$ in $\mathcal{P}$ as 
$$
var_p(f,\mathcal{P})=\left(\sum_i var_p(f,A_i)^p\right)^{1/p}=
\left(\sum\limits_{i:\, \mu(A_i)>0} \frac{1}{\mu(A_i)} \int_{A_i\times A_i} |f(x)-f(y)|^p \, d\mu(x)d\mu(y)\right)^{1/p}.
$$
For a measurable set $A$ of finite measure, we define the \textit{$k$-th total $p$-variation} of $f$ as 
$$
\V_{p,k}(f,A)=\inf\Big\{var_p(f,\mathcal{P}): \, \mathcal{P} \hbox{ is a partition of } A, \, 
|\mathcal{P}|\le k\Big\}
$$
where the infimum is taken over the set of finite measurable partitions of $A$ consisting of 
at most $k$ measurable sets. Finally, we define the \textit{total $p$-th variation} of $f$ as
$$
\V_{p,k}(f)=\sup_{{A\in\F\atop\mu(A)<\infty}} \V_{p,k}(f,A)
$$
Note that if $\mu$ is finite then $\V_{p,k}(f,\Omega)\leq \V_{p,k}(f)$, and it is not clear if both 
measures of total variation are equivalent, something that we study below (see Proposition \ref{variation}).
\end{definition}

\begin{remark} Notice that the sets in $\mathcal{P}$ that have measure $0$ can be removed by 
gluing them to an element of
$\mathcal{P}$ with positive measure. We redefine a new 
collection $\widetilde{\mathcal{P}}$, which has fewer elements
and $var_p(f,\mathcal{P})=var_p(f,\widetilde{\mathcal{P}})$. So, in what follows,
we can always assume that 
$\mathcal{P}$ is a collection with sets of positive and finite measure.
\end{remark}

\medskip

We compile some basic properties of $\V_{p,k}(\bullet,\Omega)$ and $\V_{p,k}(\bullet)$ in the next result:

\begin{proposition}
Let $(\Omega,\F,\mu)$ be a measure space and $p\in[1,\infty)$. Then $(\V_{p,k})_{k\geq 1}$ 
is a decreasing family of continuous semi-norms on $L^p(\Omega,\F,\mu)$ such that 
$\V_{p,k}(\bullet)\leq 2\|\bullet\|_p$ for all $k\geq 1$. The same properties hold for 
$(\V_{p,k}(\bullet,\Omega))_{k\geq 1}$, in the case $\mu$ is a finite measure.
\end{proposition}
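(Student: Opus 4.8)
The required assertion bundles together three seminorm axioms (nonnegativity, positive homogeneity, subadditivity), monotonicity in $k$, the bound $\V_{p,k}(\bullet)\le 2\|\bullet\|_p$, and continuity. The plan is to reduce everything to the per-set quantities $\V_{p,k}(\bullet,A)$, since $\V_{p,k}(f)=\sup_{\mu(A)<\infty}\V_{p,k}(f,A)$ and a supremum of seminorms is again a seminorm (a supremum also preserves an upper bound and the decreasing behaviour in $k$); the statement for $\V_{p,k}(\bullet,\Omega)$ in the finite case is then just one of these terms. Continuity will then need no separate work: once $\V_{p,k}$ is a seminorm dominated by $2\|\bullet\|_p$, the reverse triangle inequality gives $|\V_{p,k}(f)-\V_{p,k}(g)|\le\V_{p,k}(f-g)\le 2\|f-g\|_p$, so $\V_{p,k}$ is in fact Lipschitz.

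I would dispatch the easy ingredients first. Nonnegativity is immediate. Positive homogeneity follows from $var_p(\lambda f,A)^p=|\lambda|^p\,var_p(f,A)^p$, which propagates through the $\ell^p$-sum over a partition, through the infimum over partitions, and through the supremum over $A$. Monotonicity in $k$ is bookkeeping: every partition of $A$ into at most $k$ sets is also a partition into at most $k+1$ sets, so the admissible family grows with $k$ and the defining infimum decreases, giving $\V_{p,k+1}\le\V_{p,k}$. For the bound I would simply test the infimum against the one-set partition $\mathcal{P}=\{A\}$, which gives $\V_{p,k}(f,A)\le var_p(f,A)$; then the elementary inequality $|a-b|^p\le 2^{p-1}(|a|^p+|b|^p)$ inserted into the double integral yields $var_p(f,A)^p\le 2^p\int_A|f|^p\,d\mu\le 2^p\|f\|_p^p$, hence $\V_{p,k}(f,A)\le 2\|f\|_p$.

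The heart of the matter is subadditivity, and this is where I expect the real difficulty. For a \emph{fixed} partition $\mathcal{P}=(A_i)$ the map $f\mapsto var_p(f,\mathcal{P})$ is a seminorm: on each block, viewing $(x,y)\mapsto f(x)-f(y)$ as a linear map into $L^p(A_i\times A_i)$ and applying Minkowski's inequality gives $var_p(f+g,A_i)\le var_p(f,A_i)+var_p(g,A_i)$, and a second application of Minkowski in $\ell^p$ across the blocks yields $var_p(f+g,\mathcal{P})\le var_p(f,\mathcal{P})+var_p(g,\mathcal{P})$. The obstacle is passing to the infimum: $\V_{p,k}(\bullet,A)$ is an infimum of the seminorms $var_p(\bullet,\mathcal{P})$, and an infimum of seminorms need not be subadditive. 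To obtain $\V_{p,k}(f+g,A)\le\V_{p,k}(f,A)+\V_{p,k}(g,A)$ one must manufacture, from near-optimal partitions $\mathcal{P}_f$ for $f$ and $\mathcal{P}_g$ for $g$, a \emph{single} partition $\mathcal{Q}$ with at most $k$ blocks that is good for $f$ and for $g$ simultaneously.

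The natural candidate for $\mathcal{Q}$ is a common refinement of $\mathcal{P}_f$ and $\mathcal{P}_g$, and here two points form the crux. First, one needs $var_p(\bullet,\cdot)$ to decrease under refinement; this is transparent for $p=2$, where a direct computation gives $var_2(f,\mathcal{P})=\sqrt2\,\|f-\EE^{\mathcal{P}}(f)\|_2$ and refining only enlarges the conditioning $\sigma$-field, but it is more delicate for general $p$. Second, and more seriously, a common refinement of two $k$-block partitions may have many more than $k$ blocks, so it is not admissible for $\V_{p,k}$ as it stands. Controlling the block count of the combined partition is the genuinely hard step, and it is exactly here that I would concentrate the effort and scrutinize the argument with care before asserting the fixed-$k$ subadditivity; the other clauses are routine once this point is settled.
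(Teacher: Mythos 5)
Your handling of nonnegativity, positive homogeneity, monotonicity in $k$, the bound $\V_{p,k}(\bullet)\le 2\|\bullet\|_p$ (one-block partition plus the $2^{p-1}$ inequality and Fubini), and the reduction of the finite-measure clause to the term $A=\Omega$ all agree with the paper, which proves only the bound explicitly and declares the seminorm property ``easy and left to the reader.'' The step at which you stopped --- subadditivity at fixed $k$ --- is exactly the step that cannot be completed: the triangle inequality for $\V_{p,k}$ is \emph{false} in general, so your refusal to assert it is the correct call, and the obstruction you identified (a common refinement of two $k$-block partitions need not have $k$ blocks) is not a technicality to be engineered around. Concretely, take $\Omega=\{1,2,3,4\}$ with counting measure, $p=2$, $k=2$, $f=\ind_{\{3,4\}}$ and $g=\ind_{\{2,4\}}$. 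Since $f,g\in\mathscr{G}_{2,2}$, every finite set $A$ admits a partition into at most two blocks on each of which the relevant function is constant, so $\V_{2,2}(f)=\V_{2,2}(g)=0$; but $f+g$ takes the three distinct values $0,1,2$ on $\{1,2,4\}$, so every two-block partition of that set puts two points with different values of $f+g$ into one block, whence $\V_{2,2}(f+g)\ge\V_{2,2}(f+g,\{1,2,4\})>0=\V_{2,2}(f)+\V_{2,2}(g)$. The structural reason is supplied by the paper itself: Proposition \ref{pro:Kernel} identifies $\V_{p,k}^{-1}(\{0\})$ with $\mathscr{G}_{p,k}$ when there are no atoms of infinite mass, and $\mathscr{G}_{p,k}$ is not a linear subspace, whereas the kernel of a seminorm must be one. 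So the gap in your proposal is real, but it reflects an error in the statement rather than a missing idea on your part.

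What survives, and how to prove it without subadditivity: for each fixed partition $\mathcal{P}$ the map $var_p(\bullet,\mathcal{P})$ is a genuine seminorm (your Minkowski argument) dominated by $2\|\bullet\|_p$, hence $2$-Lipschitz on $L^p(\Omega,\F,\mu)$; an infimum of $2$-Lipschitz functions is $2$-Lipschitz, and so is a supremum of $2$-Lipschitz functions that is finite (which the uniform bound guarantees). Therefore $\V_{p,k}(\bullet,A)$ and $\V_{p,k}$ are $2$-Lipschitz, which gives the continuity claim without any reverse triangle inequality --- note that both you and the paper deduce continuity from the bound in a way that tacitly uses subadditivity, so this repair is needed in either version. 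The Lipschitz estimate $|\V_{p,k}(f)-\V_{p,k}(h)|\le 2\|f-h\|_p$ also substitutes for the one later invocation of the ``seminorm property'' in the paper (the truncation step $\V_{p,k}(f\ind_{|f|\le C})\le\V_{p,k}(f)+2\e$ in the proof of Proposition \ref{variation}(iii)), so nothing downstream is lost. Your side remark that $var_2(f,\mathcal{P})=\sqrt{2}\,\|(f-\EE^{\mathcal{P}}(f))\ind_A\|_2$ is correct and is a clean way to see the fixed-partition seminorm property for $p=2$.
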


\begin{proof}
The fact that $\V_{p,k}$ is a semi-norm is easy and is left to the reader. 
The monotony of $(\V_{p,k})_{k\geq 1}$ follows directly from the definition. 
Let $f\in L^p(\Omega,\F,\mu)$ and $A$ be a measurable set of finite and positive measure. 
First note that $var_p(f,A)\le 2\|f\ind_A\|_p$. In fact, using that $(a+b)^p\le 2^{p-1}(a^p+b^p)$
holds for all nonnegative numbers $a,b$ and Fubini's theorem, we have that
\begin{align}
\label{eq:desi_var}
var_p(f,A)^p &= \frac{1}{\mu(A)} \int_{A\times A} |f(x)-f(y)|^p \, d\mu(x)d\mu(y) \nonumber\\
&\leq \frac{2^{p-1}}{\mu(A)} \int_{A\times A} |f(x)|^p+|f(y)|^pd\mu(x)d\mu(y)\\
&=2^p\|f\ind_A\|_p^p.\nonumber
\end{align}
It follows that if $\mathcal{P}$ is a finite measurable partition of $A$ then 
$\V_p(f,\mathcal{P})\le 2\|f\ind_A\|_p$ and therefore, we deduce that $\V_{p,k}(f)\leq 2\|f\|_p$. 
In particular, $\V_{p,k}$ is continuous. In case the measure is finite we have
$$
\V_{p,k}(f,\Omega)\le \V_{p,k}(f)\le 2\|f\|_p.
$$
\end{proof}

\begin{remark}
Assume that $\mu$ is a finite measure. We notice that for a fixed function $f\in L^p(\Omega,\F,\mu)$, 
we have $\lim_{k\to \infty} \V_{p,k}(f,\Omega)=0$. Indeed, let $k\in \NN$ and define the sets
$$
A_{i}=\left\{x: \, \frac{i}{k} \le f(x)<\frac{i+1}{k}\right\}, \hbox{ for } i\in\{-k^2,...,k^2-2\},
$$
$A_{k^2-1}=\left\{x: \, k-\frac{1}{k} \le f(x)\le k\right\}$ and $A_{k^2}=\{x: |f(x)|>k\}$.
Then, we have, for all $i\in\{-k^2,...,k^2-1\}$
$$
var_p(f,A_i)\le \frac{\mu(A_i)^{\frac{1}{p}}}{k}
$$
and for $i=k^2$
$$
var_p(f,A_{k^2})\le 2\|f\ind_{A_{k^2}}\|_p.
$$
Thus,
$$
\V_{p,k}(f,\Omega)^p\le \frac{1}{k^p}\mu(\Omega)+2^p \int_{|f|>k} |f(x)|^p \,d\mu(x),
$$
and then $\lim_k \V_{p,k}(f,\Omega)=0$.

We also notice that the same property holds for $\left(\V_{p,k}(f)\right)_k$, in general measure spaces, but its proof is more involved and
we postponed to Corollary \ref{cor_limitvar}.
\end{remark}

The following lemma proves that the variation of a function can always be computed on a $\sigma$-finite set if the measure has no atoms of infinite mass.

\begin{lemma}\label{sigmafinite_variation}
Assume $(\Omega,\F,\mu)$ is a measurable space such that $\mu$ has no atoms of infinite mass and $p\in[1,\infty)$. Let $f\in L^p(\Omega,\F,\mu)$ and fix $k\geq 1$. Then there exists an increasing sequence of finite measure sets $(\Omega^*_n)_n\subset\F$ such that $$\V_{p,k}(f)=\lim_n\V_{p,k,n}(f)=\V_{p,k,*}(f),$$  where 
\begin{itemize}
    \item $\V_{p,k,*}(f)$ is the total variation of $f|_{\Omega^*}$ computed in $(\Omega^*,\F|_{\Omega^*}, \mu|_{\Omega^*})$ with $\Omega^*=\bigcup_n\Omega^*_n;$
    \item $\V_{p,k,n}(f)$ is the total variation of $f|_{\Omega_n^*}$ computed in $(\Omega_n^*,\F|_{\Omega_n^*}, \mu|_{\Omega_n^*})$.
\end{itemize}
\end{lemma}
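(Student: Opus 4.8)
The plan is to collapse the supremum defining $\V_{p,k}(f)$ onto a countable family of test sets whose union is automatically $\sigma$-finite. Since $\V_{p,k}(f)\le 2\|f\|_p<\infty$ by the preceding proposition, the supremum $\V_{p,k}(f)=\sup\{\V_{p,k}(f,A):\ \mu(A)<\infty\}$ is a finite number that can be approached along a sequence: I would fix finite measure sets $(A_m)_m\subset\F$ with $\V_{p,k}(f,A_m)\to\V_{p,k}(f)$, and then set $\Omega^*_n=\bigcup_{m\le n}A_m$ and $\Omega^*=\bigcup_n\Omega^*_n$. Each $\Omega^*_n$ has finite measure, the sequence $(\Omega^*_n)_n$ is increasing, and $\Omega^*$ is $\sigma$-finite, as the statement requires.

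The step I would isolate first, and the one carrying the real content, is that $\V_{p,k}(f,A)$ is intrinsic to the finite measure set $A$: it depends only on $(A,\F|_A,\mu|_A)$ and on $f|_A$, not on the surrounding space. This is because a partition of $A$ into at most $k$ measurable pieces is the same object whether the pieces are taken in $\F$ or in $\F|_{\Omega^*}$ (any $P\subseteq A\subseteq\Omega^*$ belongs to $\F$ exactly when it belongs to $\F|_{\Omega^*}$), and each $var_p(f,A_i)$ only involves $\mu$ on subsets of $A$. Hence $\V_{p,k}(f,A)=\V_{p,k}(f|_{\Omega^*},A)$ for every finite measure $A\subseteq\Omega^*$, and likewise for subsets of each $\Omega^*_n$.

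With this identification the conclusion follows by a monotone squeeze. Because $\Omega^*_n\subseteq\Omega^*_{n+1}\subseteq\Omega^*\subseteq\Omega$, the total variation computed on the larger space is a supremum over a larger collection of finite measure test sets, so $(\V_{p,k,n}(f))_n$ is nondecreasing and $\V_{p,k,n}(f)\le\V_{p,k,*}(f)\le\V_{p,k}(f)$ for every $n$. Conversely $A_n\subseteq\Omega^*_n$ yields $\V_{p,k,n}(f)\ge\V_{p,k}(f,A_n)\to\V_{p,k}(f)$, so $\lim_n\V_{p,k,n}(f)\ge\V_{p,k}(f)$. Combining,
$$
\V_{p,k}(f)\le\lim_n\V_{p,k,n}(f)\le\V_{p,k,*}(f)\le\V_{p,k}(f),
$$
and all three quantities coincide.

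The step requiring the most care is exactly the intrinsic character asserted above, and it is here that the absence of atoms of infinite mass clarifies the picture: any atom $B$ with $\mu(B)=\infty$ carries $f\equiv 0$ (otherwise $\int_B|f|^p=\infty$) and meets every finite measure set in a $\mu$-null set, so it never contributes to any $\V_{p,k}(f,A)$ and may be ignored. If one prefers an explicit $\Omega^*$ containing $\{f\neq 0\}$ rather than the maximizing-sequence construction, this hypothesis is precisely what guarantees that the zero set still contains finite measure subsets of arbitrarily large mass, so that a support-based $\Omega^*$ also realizes the supremum; for the existence statement as phrased, however, the construction above suffices.
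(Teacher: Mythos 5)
Your squeeze
\[
\V_{p,k}(f)-\tfrac1n\;\le\;\V_{p,k}(f,A_n)\;\le\;\V_{p,k,n}(f)\;\le\;\V_{p,k,*}(f)\;\le\;\V_{p,k}(f)
\]
is exactly the one used in the paper, and your preliminary observation that $\V_{p,k}(f,A)$ is intrinsic to $(A,\F|_A,\mu|_A)$ is the (implicit) justification both arguments rest on; for the lemma as literally stated, your construction $\Omega^*_n=\bigcup_{m\le n}A_m$ therefore gives a complete proof. The genuine difference lies in the choice of $\Omega^*$: the paper adjoins to $\bigcup_m A_m$ the support $\bigcup_m\{|f|>1/m\}$ and, when $\{f=0\}$ has infinite measure, a $\sigma$-finite subset $\widetilde D\subset\{f=0\}$ of infinite measure. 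That is the only place the hypothesis that $\mu$ has no atoms of infinite mass enters (it guarantees that every set of infinite measure contains subsets of positive finite measure, hence of arbitrarily large finite measure, so such a $\widetilde D$ exists); your proof never invokes the hypothesis, which is consistent with the bare statement not needing it. What the paper's enlargement buys is visible only downstream, in the proof of Proposition \ref{variation}(iii): there one needs $\{f\neq 0\}\subset\Omega^*$ and $\mu(\Omega_n^*)\to\infty$ (when $\mu(\Omega)=\infty$) in order to force one coefficient of the minimizers on $\Omega_n^*$ to vanish in the limit and to extend the limiting minimizer by $0$ outside $\Omega^*$ while staying in $\mathscr{G}_{p,k}$. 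Your closing remarks gesture at this correctly, though the sentence about atoms of infinite mass ``carrying $f\equiv 0$'' is vacuous here, since the hypothesis excludes such atoms outright; its actual role is the one you state last, namely producing the $\sigma$-finite exhaustion of the zero set.
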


\begin{proof}
We can obviously suppose that $\mu$ is infinite. Define $F=\{x: \ f(x)=0\}$. In case $F$ has finite measure, we define $\widetilde{D}=F$. If $F$ has infinite measure, we consider a subset $\widetilde{D}\subset F$ which is $\sigma$-finite and of infinite measure. Note that such a set exists. Indeed, take
$$
a=\sup_{D\in \F, D\subset F, \mu(D)<\infty} \mu(D).
$$
Let us prove that $a=\infty$. Consider a sequence $(D_l)_l$ of subsets of $F$, each one of finite measure
such that $\lim_l \mu(D_l)=a$. It is clear
that $\widetilde{D}_l=\cup_{i\le l} D_i$ is an increasing sequence of sets of finite measure, included in $F$ which satisfies
$
\mu(D_l)\le \mu(\widetilde{D}_l),
$
proving that $\mu(\widetilde{D}_l)\uparrow a$ and $\widetilde{D}=\cup_l \widetilde{D}_l$ satisfies $\mu(\widetilde{D})=a$. If $a$ is finite
then, $F\setminus\widetilde{D}$ has infinite measure. By hypothesis this set contains a set $H$ of finite and positive measure. 
Then $\mu(\widetilde{D}\cup H)=\mu(\widetilde{D})+\mu(H)>a$, which is a contradiction.

Now consider a sequence of sets of finite measure $(A_n)_n$ such that
$$
\V_{p,k}(f,A_n)\ge \V_{p,k}(f)-\frac{1}{n}.
$$ For every $m\ge 1$ the set  $C_m=\left\{x: |f(x)|>\frac{1}{m}\right\}$ has finite measure. The set $\Omega^*=\cup_n A_n \cup \cup_m C_m \cup\widetilde{D}$ is $\sigma$-finite
and it has infinite measure, because $\mu(\bigcup_m C_m \cup F)=\mu(\Omega)=\infty$. We consider 
$$
\Omega_n^*=\begin{cases} \cup_{i\le n} A_i \cup C_i \cup \widetilde{D}     &\hbox{if } \mu(\widetilde{D})<\infty\\
                         \cup_{i\le n} A_i \cup C_i \cup \widetilde{D}_i   &\hbox{if } \mu(\widetilde{D})=\infty
\end{cases},
$$
which is an increasing sequence of sets of finite and positive measure, such that $\Omega_n^*\uparrow \Omega^*$. Define $\V_{p,k,n}(f)$ the total variation of $f|_{\Omega_n^*}$ computed in $(\Omega_n^*,\F|_{\Omega_n^*}, \mu|_{\Omega_n^*})$, that is
$$
\V_{p,k,n}(f)=\sup\limits_{A\in \F, A\subset \Omega_n^*} \V_{p,k}(f|_{\Omega_n^*},A)=\sup\limits_{A\in \F, A\subset \Omega_n^*} \V_{p,k}(f,A)\le \V_{p,k}(f).
$$
Similarly, we define $\V_{p,k,*}(f)$, which is the total variation of $f|_{\Omega^*}$ computed in $(\Omega^*,\F|_{\Omega^*}, \mu|_{\Omega^*})$. It is clear that for every $n$, by construction,
$$
\V_{p,k}(f)-\frac{1}{n}\le \V_{p,k}(f,A_n)\le \V_{p,k,n}(f)\le \V_{p,k,*}(f)\le \V_{p,k}(f),
$$
and also that $(\V_{p,k,n}(f))_n$ is increasing, showing that 
$$
\V_{p,k,n}(f)\uparrow \V_{p,k}(f)
$$
and $\V_{p,k,*}(f)=\V_{p,k}(f)$.  
\end{proof}

The next proposition shows that the variation and $\mathscr{D}_{p,k}$ have the same behaviour. This will be a fundamental tool to caracterize the uniform approximability of sets.

\begin{proposition}
\label{variation} 
Assume $(\Omega,\F,\mu)$ is a measurable space and $p\in[1,\infty)$. For any $k\ge 1$ and any
$f\in L^p(\Omega,\F,\mu)$, we have
\begin{enumerate}
\item[(i)] 
$$
\mathscr{D}_{p,k+1}(f)\le \V_{p,k}(f)\le 2\mathscr{D}_{p,k}(f).
$$
\item[(ii)] If the measure $\mu$ is finite, it holds 
$$
\mathscr{D}_{p,k}(f)\le \V_{p,k}(f,\Omega)
\le \V_{p,k}(f) \le 2\mathscr{D}_{p,k}(f)\le 2\V_{p,k}(f,\Omega).
$$
\item[(iii)] If $\mu$ has no atoms of infinite mass, we have that
$$
\mathscr{D}_{p,k}(f)\le \V_{p,k}(f)\le 2\mathscr{D}_{p,k}(f).
$$
\end{enumerate}
\end{proposition}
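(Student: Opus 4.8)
The plan is to reduce the whole statement to a single two-sided comparison, on one measurable set $A$ of positive finite measure, between the variation $var_p(f,A)$ and the error of the best constant approximation $e_p(f,A)^p:=\int_A|f-\M_p(f,A)|^p\,d\mu$. The lower bound $e_p(f,A)\le var_p(f,A)$ follows by freezing $y$ and using the optimality of the $p$-th mean, $\int_A|f(x)-f(y)|^p\,d\mu(x)\ge\int_A|f(x)-\M_p(f,A)|^p\,d\mu(x)$ for every $y$, and then integrating in $y$ over $A$. The upper bound $var_p(f,A)\le 2e_p(f,A)$ is exactly the computation already carried out in \eqref{eq:desi_var}, applied with the constant $\M_p(f,A)$ in place of $0$. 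Summing both over the pieces of a partition $\mathcal P$ with $|\mathcal P|\le k$ gives $e_p(f,\mathcal P)\le var_p(f,\mathcal P)\le 2\,e_p(f,\mathcal P)$, where $e_p(f,\mathcal P)^p=\sum_i e_p(f,A_i)^p$. Since any $h=\sum a_i\ind_{A_i}\in\mathscr{G}_{p,k}$ satisfies $\|f-h\|_p\ge e_p(f,\mathcal P)$ with equality when $a_i=\M_p(f,A_i)$, the piecewise $p$-th mean on $\mathcal P$ realizes $e_p(f,\mathcal P)$; this is the bridge between $var_p$ and $\mathscr{D}_{p,k}$.

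With this in hand, the general upper bound $\V_{p,k}(f)\le 2\mathscr{D}_{p,k}(f)$ comes first and cleanly. Given any $h=\sum_{i=1}^k a_i\ind_{B_i}\in\mathscr{G}_{p,k}$ and any $A$ with $\mu(A)<\infty$, I test $\V_{p,k}(f,A)$ against the partition $(A\cap B_i)_i$ of $A$: the two-sided estimate gives $\V_{p,k}(f,A)\le var_p(f,(A\cap B_i)_i)\le 2\,e_p(f,(A\cap B_i)_i)\le 2\|f-h\|_p$, and taking the supremum over $A$ and then the infimum over $h$ yields the claim. For the finite measure statement (ii) I note that $\mathscr{D}_{p,k}(f)=\inf_{|\mathcal P|\le k}e_p(f,\mathcal P)$ and $\V_{p,k}(f,\Omega)=\inf_{|\mathcal P|\le k}var_p(f,\mathcal P)$ are infima of the two sides of the single-set comparison over the same family of partitions of $\Omega$, so $\mathscr{D}_{p,k}(f)\le\V_{p,k}(f,\Omega)\le 2\mathscr{D}_{p,k}(f)$; combining this with $\V_{p,k}(f,\Omega)\le\V_{p,k}(f)$ (valid since $\mu(\Omega)<\infty$) and with the previous paragraph closes the entire chain in (ii).

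For the general lower bound (i), $\mathscr{D}_{p,k+1}(f)\le\V_{p,k}(f)$, I would exhaust the $L^p$-mass of $f$: since $f\in L^p$, the sets $\{|f|>\delta\}$ have finite measure and, by dominated convergence, $\int_{\{|f|\le\delta\}}|f|^p\to 0$ as $\delta\to 0$. Fixing $\eta>0$, I choose such a finite set $A$ with $\int_{A^c}|f|^p<\eta$, then a partition $\mathcal P$ of $A$ with $|\mathcal P|\le k$ and $var_p(f,\mathcal P)\le\V_{p,k}(f,A)+\eta\le\V_{p,k}(f)+\eta$. The function equal to the piecewise $p$-th means on $A$ and to $0$ on $A^c$ lies in $\mathscr{G}_{p,k+1}$ (the value $0$ on the possibly infinite piece $A^c$ is the $(k+1)$-st), and its error is at most $e_p(f,\mathcal P)^p+\int_{A^c}|f|^p\le(\V_{p,k}(f)+\eta)^p+\eta$; letting $\eta\to 0$ gives (i).

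The only genuinely delicate point is (iii), where the hypothesis that $\mu$ has no atoms of infinite mass must be used to remove the spurious $(k+1)$-st value. Here I would invoke Lemma \ref{sigmafinite_variation} to replace $\Omega$ by the $\sigma$-finite set $\Omega^*\supseteq\{f\neq 0\}$ on which $\V_{p,k}(f)=\lim_n\V_{p,k,n}(f)$ is computed along an exhausting sequence $\Omega^*_n\uparrow\Omega^*$ of finite measure. Applying (ii) inside each $\Omega^*_n$ produces, via Theorem \ref{pro:minimum}, a minimizer $h_n$ of $f|_{\Omega^*_n}$ in $f$-special form with $\|f-h_n\|_{L^p(\Omega^*_n)}\le\V_{p,k}(f)$; extending $h_n$ by $0$ off $\Omega^*_n$ costs only $\int_{\Omega^*\setminus\Omega^*_n}|f|^p\to 0$, since $f=0$ on $(\Omega^*)^c$. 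The difficulty is that this extension a priori still uses $k+1$ values. To absorb the extra $0$, I would argue that the cluster $C^n_s$ whose value-interval contains $0$ swallows the bulk of the zero-set captured in $\Omega^*_n$; because $f=0$ on that portion while $\int_{\Omega^*_n}|f|^p\le\|f\|_p^p$ stays bounded, the corresponding mean $\M_p(f,C^n_s)$ tends to $0$, so replacing it by exactly $0$ changes the error by a vanishing amount, controlled through the $L^p$ triangle inequality $\bigl|\,\|f\|_{L^p(C^n_s)}-\|f-\M_p(f,C^n_s)\|_{L^p(C^n_s)}\,\bigr|\le|\M_p(f,C^n_s)|\,\mu(C^n_s)^{1/p}$, and lets $0$ serve as one of the $k$ values, keeping the extension in $\mathscr{G}_{p,k}$ in the limit. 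Making this zero-absorption quantitative and uniform in $n$ is the main obstacle, and it is exactly where the absence of atoms of infinite mass, hence the $\sigma$-finiteness furnished by Lemma \ref{sigmafinite_variation}, is essential.
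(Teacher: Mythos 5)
Parts (i) and (ii) of your argument are correct and essentially the paper's own proof, organized slightly more cleanly as a single two-sided comparison $e_p(f,A)\le var_p(f,A)\le 2e_p(f,A)$ per set (the left inequality by integrating the optimality of $\M_p(f,A)$ in $y$, the right by the computation of \eqref{eq:desi_var} centered at $\M_p(f,A)$ instead of $0$), after which both $\mathscr{D}_{p,k}$ and $\V_{p,k}(\cdot,\Omega)$ appear as infima of the two sides over the same family of partitions. The lower bound in (i), with the extra value $0$ on $A^c$ accounting for the passage from $k$ to $k+1$, is also exactly the paper's argument.

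Part (iii), however, has a genuine gap, and you have located it but not closed it. Your proposed quantitative control --- that replacing $\M_p(f,C^n_s)$ by $0$ on the large cluster changes the error by at most $|\M_p(f,C^n_s)|\,\mu(C^n_s)^{1/p}$, which you hope vanishes --- does not work: the optimality of the $p$-th mean only gives $|\M_p(f,C^n_s)|^p\,\mu(C^n_s)\le 2^p\|f\|_p^p$, so this product is merely \emph{bounded} by $2\|f\|_p$, while $\mu(C^n_s)\to\infty$ forces only $\M_p(f,C^n_s)\to 0$, not the product to vanish. The paper's proof avoids this trap by arguing in the opposite direction: after first reducing to \emph{bounded} $f$ by truncation (a step your sketch omits entirely, and which is needed to get compactness of the values and breakpoints), it passes to subsequences along which all breakpoints $r_{i,n}$ converge monotonically (so that the indicators $\ind_{f^{-1}([r_{i,n},r_{i+1,n}))}$ converge a.e.\ to indicators of limit intervals), and then applies Fatou's lemma on the zero-cluster together with dominated convergence on the remaining pieces, which have uniformly finite measure. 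This shows directly that $\liminf_n\|f|_{\Omega_n^*}-g_n\|_p$ dominates the error of a limit function $\ell$ that already takes the value exactly $0$ on the limit zero-cluster, hence extends by $0$ to an element of $\mathscr{G}_{p,k}$ (not $\mathscr{G}_{p,k+1}$). In short: the "zero-absorption" cannot be done by perturbing each $h_n$ and estimating the perturbation; it has to be done in the limit, via lower semicontinuity of the error along a carefully chosen subsequence. As written, your proof of (iii) is incomplete.
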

\begin{proof} $(i)$ For the upper bound, consider $g\in \mathscr{G}_k\cap L^p(\Omega,\F,\mu)$ a function such that
$$
\mathscr{D}_{p,k}(f)^p=\|f-g\|_p^p.
$$
Assume that
$
g=\sum_{i=1}^k c_i \ind_{A_i},
$
where $\{A_i\}_{1\leq i\leq k}$ is a finite partition of $\Omega$. Clearly if $\mu(A_i)=\infty$, then $c_i=0$. For $A$ a set of finite measure define a partition of $A$ by 
$\mathcal P=\{A\cap A_i\}_{1\leq i\leq k}$. Using that $(a+b)^p\le 2^{p-1}(a^p+b^p)$ for all positive numbers $a$ and $b$, 
we get for all $i$ such that $\mu(A\cap A_i)>0$
$$
var_p(f,A\cap A_i)^p=\frac{1}{\mu(A\cap A_i)}\int_{(A\cap A_i)\times (A\cap A_i)} 
|f(x)-f(y)|^p d\mu(x)d\mu(y)\le 2^p \int_{A\cap A_i} |f(z)-c_i|^p d\mu(z).
$$
Then,
$$
var_p(f,\mathcal{P})^p\le 2^p \sum_i \int_{A\cap A_i} |f(z)-c_i|^p d\mu(z)\le 2^p \|f-g\|_p^p.
$$ 
Therefore, we get
$$
\V_{p,k}(f,A)\le 2 \|f-g\|_p=2\mathscr{D}_{p,k}(f).
$$

For the lower bound let $\varepsilon>0$ and take a set $A$ of finite measure such that
$\|f\ind_{A^c}\|_p^p<\e$. By definition of $\V_{p,k}(f,A)$, there exists a finite 
partition $\mathcal{P}=\{A_i\}_{1\leq i\leq n}$  of $A$, with $n\le k$, such
that (we assume all the sets in $\mathcal{P}$ has positive measure)
\begin{align*}
var_p(f,\mathcal{P})^p&=\sum_i \frac{1}{\mu(A_i)}\int_{A_i\times A_i} 
|f(x)-f(y)|^p d\mu(x)d\mu(y)\\
&\le \left(\V_{p,k}(f,A)\right)^p +\e
\le\left(\V_{p,k}(f)\right)^p +\e.  
\end{align*}
For every $i\le n$ by the definition of $\M_p(f,A_i)$, we have
$$
\ind_{A_i}(y)\int_{A_i} |f(x)-\M_p(f,A_i)|^p \, d\mu(x) 
\le \ind_{A_i}(y) \int_{A_i} |f(x)-f(y)|^p \, d\mu(x),
$$
and therefore, integrating over $y$ we get
$$
\int_{A_i} |f(x)-\M_p(f,A_i)|^p \, d\mu(x) \le \frac{1}{\mu(A_i)} \int_{A_i\times A_i} |f(x)-f(y)|^p \, d\mu(x),
$$
and then
$$
\sum_i \int_{A_i} |f(x)-\M_p(f,A_i)|^p \, d\mu(x)  \le var_p(f,\mathcal{P})^p.
$$
Finally, define $g=\sum_i \M_p(h,A_i) \ind_{A_i}+0\ind_{A^c} \in \mathscr{G}_{p,k+1}\cap L^p(\Omega,\F,\mu)$ to obtain that
$$
\|f\ind_A-g\|^p_p\le var_p(f,\mathcal{P})^p\le  \left(\V_{p,k}(f)\right)^p+\e
$$ 
To finish this part, notice that
\begin{align*}
\|f-g\|_p^p&=\|f\ind_A-g\ind_A\|_p^p+\|f\ind_{A^c}-g\ind_{A^c}\|_p^p \\
&=\|f\ind_A-g\|_p^p+\|f\ind_{A^c}\|_p^p\le  \left(\V_{p,k}(f)\right)^p+2\e
\end{align*}
which implies that 
$
\mathscr{D}_{p,k+1}(f)\le \V_{p,k}(f).
$
\medskip

$(ii)$ The proof is similar to $(i)$. The upper bound follows immediately from the lower bound to be proved.
For the lower estimate, in the above proof we can take $A=\Omega$.
\medskip

$(iii)$ Let $(\Omega^*_n)_n\subset\F$ and $\Omega^*=\bigcup_n \Omega^*_n$ given by Lemma \ref{sigmafinite_variation}, such that 
$$
\V_{p,k}(f)=\lim_n\V_{p,k,n}(f)=\V_{p,k,*}(f).
$$

We first assume that $f$ is bounded by some constant $C>0$. Then, using the result we have shown for the finite measure case, we have on $\Omega_n^*$
$$
\inf\{\|f|_{\Omega_n^*}-g\|_p:\ g\in \mathscr{G}_{p,k}(\Omega_n^*)\}\le \V_{p,k,n}(f).
$$
By Theorem \ref{pro:minimum}, the left hand side is attained at some function $g_n$ defined in $\Omega_n^*$, which is 
also bounded by $C$. We can assume this minimizer has the following form
$$
g_n=\sum\limits_{i=1}^{q(n)} b_{i,n} \ind_{B_{i,n}},
$$
where $\{B_{i,n}\}_{1\leq i\leq q(n)}$ is a partition of sets of positive measure of $\Omega_n^*$ and
$$
-C\le b_{1,n}<...< b_{q(n),n}\le C,
$$ 
$$
r_{1,n}=-C-1,\ r_{q(n)+1,n}=C+1,\ r_{i,n}=\frac{b_{i-1,n}+b_{i,n}}{2}\ \ \text{for}\ i\in\{2,..., q(n)\},
$$
$$
B_{i,n}=f^{-1}([r_{i,n},r_{i+1,n}))\cap \Omega_n^*\ \ \text{for}\ i\in\{1,...,q(n)-1\}\ \ \text{and}\ \ 
B_{q(n),n}=f^{-1}([r_{q(n),n},r_{q(n)+1,n}])\cap \Omega_n^*,
$$ 
$$
b_{i,n}=\M_p(f,B_{i,n})\ \ \text{for}\ i\in\{1,..., q(n)\}
$$
and $q(n)\le k$. As before, we can assume by passing to a subsequence that $q(n)=q$ is constant and 
the vector $v_n=(r_{1,n},b_{1,n},r_{2,n}, ..., r_{q,n},b_{q,n},r_{q+1,n})$ converges to a vector
in $[-C-1,C+1]^{3q}$, which we denote by $v=(r_{1},b_{1},r_{2}, ..., r_{q},b_{q},r_{q+1})$. Also
we denote by $L=(b_1,...,b_q)$ and $w_1<...<w_m$ the different values in $L$, where $m\le q$.

Let us show that $w_{t^*}=0$ for some $t^*$. For that, remark that $\Omega_n^*=\cup_{i=1}^q B_{i,n}$ and therefore, 
there exists an index $i(n)$, such that
$$
\mu(B_{i(n),n})\ge \frac1q \mu(\Omega_n^*),
$$
showing that $\lim_n \mu(B_{i(n),n})=\infty$. We can assume that $i(n)=i$ is constant, by passing to a subsequence if necessary. Using the optimality of $b_{i,n}=\M_p(f,B_{i,n})$, we get
$$
\begin{array}{ll}
|b_{i,n}|^p \mu(B_{i,n})&\hspace{-0.2cm}\le 2^{p-1} \left(\int_{B_{i,n}} |f(x)-\M_p(f,B_{i,n})|^p\ d\mu(x)+
\int_{B_{i,n}} |f(x)|^p\ d\mu(x)\right)\le 2^p \|f\|_p^p.
\end{array}
$$
This shows that $b_{i,n}\to b_i=0$, and the claim holds by taking $t^*$ such that $w_{t^*}=b_i=0$. 

Consider $I_t=\{j \in \{1,... q\}: \ b_j=w_t\}$ for $t\in\{1,...,m\}$. Notice that each $I_t$ is a nonempty interval of $I=\{1,...,q\}$. 
Assume that $I_t=\{l(t),...,u(t)\}$, then we have
$r_{l(t)+1,n}\to w_t, ..., r_{u(t),n}\to w_t, b_{l(t),n}\to w_t, ..., b_{u(t),n}\to w_t$ and 
$$
\lim_n r_{l(t),n}=r_{l(t)}=\frac{b_{l(t)-1}+w_t}{2} < w_t <\frac{b_{u(t)+1}+w_t}{2}=r_{u(t)+1}=\lim_n r_{u(t)+1,n},
$$
with the obvious modifications in the case $l(t)=1$ or $u(t)=q$. By construction we have
for all $i< l(t^*)$ it holds $r_{i+1,n}\le r_{l(t^*),n}<\frac{r_{l(t^*)}}{2}=r_-$, for all large $n$, because $r_{l(t^*)}<0$. Similarly,
for all $i\ge u(t^*)$ we have $r_{i+1,n}\ge r_{u(t^*)+1,n}>\frac{r_{u(t^*)+1}}{2}=r_+>0$, for all large $n$. This implies that, for large $n$
$$
\bigcup_{i<l(t^*)} B_{i,n}\subset f^{-1}([-C-1,r_-]),
$$
which is a set of finite measure: $\mu(f^{-1}([-C-1,r_-]))<\infty$. Consider a modification of $g_n$ given by
$$
\ell_n=\sum\limits_{i\notin [l(t^*),u(t^*)]} b_i \ind_{B_{i,n}}+\sum_{l(t^*)\le i\le u(t^*)} b_{i,n} \ind_{B_{i,n}}.
$$
We have $\|g_n-\ell_n\|_p$ converges to zero. Indeed, this follows from the inequality
$$
\begin{array}{ll}
\|g_n-\ell_n\|_p^p &\hspace{-0.2cm}=\sum\limits_{i\notin \{l(t^*),...,u(t^*)\}} |b_i-b_{i,n}|^p \mu(B_{i,n})\\
&\hspace{-0.2cm}\le \max_j|b_j-b_{j,n}|^p \mu\left(f^{-1}([-C-1,r_-])\cup f^{-1}([r_+,C+1])\right)\to 0.
\end{array}
$$
Using the triangular inequality and the optimality of $g_n$, we get 
$$
\|f|_{\Omega_n^*}-g_n\|_p\le \|f|_{\Omega_n^*}-\ell_n\|_p\le \|f|_{\Omega_n^*}-g_n\|_p+\|g_n-\ell_n\|_p,
$$
and we plan to use Fatou's Lemma. Before doing that, we will fix a subsequence with certain monotonic properties.
Since $r_{1,n}, r_{q+1,n}$ are constant, there is no restriction here. For $i\in\{2,...,q\}$ we choose a subsequence 
in the following order. If $(r_{2,n})_n$ has an strictly decreasing subsequence, we consider this as $n^{(2)}$ and define $T(2)=\text{sd}$ (for strictly decreasing)
otherwise, we consider $n^{(2)}$ so that  $(r_{2,n})_n$ is increasing along this subsequence and $T(2)=\text{in}$ (for increasing). 
Now, we construct $n^{(3)}$. If $(r_{3,n^{(2)}})_{n^{(2)}}$ 
has an strictly decreasing subsequence we take this as $n^{(3)}$ and $T(3)=\text{sd}$, 
otherwise we take $n^{(3)}$ so that $(r_{3,n^{(3)}})_{n^{(3)}}$ is increasing, and $T(3)=\text{in}$. We continue
in this way until we define $n^{(q)}$. We put $T(1)=\text{in}$ and $T(q+1)=\text{sd}$.
We call $n'=n^{(q)}$. In this way we have the a.e. convergence
$$
\ind_{f^{-1}([r_{i,n'},r_{i+1,n'}))\cap\Omega_{n'}^*}\to \begin{cases} \ind_{f^{-1}([r_{i},r_{i+1}])\cap \Omega^*}         &\hbox{if } T(i)=\text{in}, T(i+1)=\text{sd}\\
                                                                        \ind_{f^{-1}([r_{i},r_{i+1}))\cap \Omega^*}         &\hbox{if } T(i)=\text{in}, T(i+1)=\text{in}\\
                                                                        \ind_{f^{-1}((r_{i},r_{i+1}])\cap \Omega^*}         &\hbox{if } T(i)=\text{sd}, T(i+1)=\text{sd}\\
                                                                        \ind_{f^{-1}((r_{i},r_{i+1}))\cap \Omega^*}         &\hbox{if } T(i)=\text{sd}, T(i+1)=\text{in}
\end{cases}
$$
We call $\J_i$ the interval, with extremes $r_i, r_{i+1}$, according to the above classification. An important remark is that $\cup_i \J_i=[-C-1,C+1]$.

Using the Dominated Convergence Theorem we conclude that
$$
\sum\limits_{i\notin \{l(t^*),...,u(t^*)\}} \int_{B_{i,n'}} |f(x)-b_i|^p \ d\mu(x)\to \sum\limits_{i\notin \{l(t^*),...,u(t^*)\}} \int_{f^{-1}(\J_i)} |f(x)-b_i|^p \ d\mu(x).
$$
On the other hand, using Fatou's Lemma we conclude
$$
\begin{array}{ll}
\liminf\limits_{n'}\sum\limits_{i\in \{l(t^*),...,u(t^*)\}} \int_{B_{i,n'}} |f(x)-b_{i,n'}|^p \ d\mu(x)
&\hspace{-0.2cm}\ge \int \liminf\limits_{n'} |f(x)-g_n(x)|^p \ind_{B_{n'}} \ d\mu(x)\\
&\hspace{-0.2cm}\ge \int |f(x)|^p \ind_{f^{-1}(\bar \J)\cap\Omega^*} \ d\mu(x).
\end{array}
$$
where $B_{n'}=\bigcup_{i\in \{l(t^*),...,u(t^*)\}} B_{i,n'}$
and  $\widetilde{\J}=\bigcup_{i\in \{l(t^*),...,u(t^*)\}} \J_i$. Here, we have used that for all $x\in B_{n'}$ we have
$$
|g_{n'}(x)|\le \max_{i\in \{l(t^*),...,u(t^*)\}}|b_{i,n'}|\to 0.
$$
Hence, 
$$
\begin{array}{l}
|f(x)|\ind_{B_{i,n'}}\le |f(x)-g_{n'}(x)|\ind_{B_{i,n'}}+\max_{i\in \{l(t^*),...,u(t^*)\}}|b_{i,n'}|\ind_{B_{i,n'}}\\
|f(x)-g_{n'}(x)|\ind_{B_{i,n'}}\le |f(x)|\ind_{B_{i,n'}}+\max_{i\in \{l(t^*),...,u(t^*)\}}|b_{i,n'}|\ind_{B_{i,n'}},
\end{array}
$$
showing that
$$
\liminf_{n'} |f(x)-g_{n'}(x)|\ind_{B_{i,n'}}=|f(x)| \liminf_{n'} \ind_{B_{i,n'}}.
$$
Putting all together, we conclude that
$$
\V_{p,k}(f) \ge \liminf_{n'} \|f|_{\Omega_{n'}^*}-g_{n'}\|_p\ge \|f|_{\Omega^*}-\ell\|_p,
$$
where the function $\ell\in \mathscr{G}_{p,q}(\Omega^*)$ is defined on $\Omega^*$ as
$$
\ell=\sum\limits_{i\notin \{l(t^*),...,u(t^*)\}} b_i \ind_{f^{-1}(\J_i)\cap\Omega^*}+0\ind_{f^{-1}(\widetilde{\J})\cap\Omega^*}.
$$
Notice that for $i\notin \{l(t^*),...,u(t^*)\}$, we have $\mu(f^{-1}(\J_i)\cap\Omega^*)\le \mu\left(f^{-1}([-C-1,r_-]\cup f^{-1}([r_+,C+1])\right)<\infty$ and
so $\mu(f^{-1}(\widetilde{\J})\cap\Omega^*)=\infty$.

Since $b_i=0$ for some $i$ and $f=0$ outside $\Omega^*$, we can extend $\ell$ by $0$ outside $\Omega^*$ and still
this extension $\bar \ell$ belongs to $\mathscr{G}_{p,q}\subset \mathscr{G}_{p,k}$. So, we get that
$$
\mathscr{D}_{p,k}(f)\le \|f-\bar \ell\|_p\le \V_{p,k}(f).
$$
and the result is shown in the case $f$ is bounded.

Now, for the general case, consider $\e>0$ and a large $C>0$, such that $\|f\ind_{|f|>C}\|_p\le \e$. From the domination
$\V_{p,k}(\bullet)\le 2\|f\|_p$, and the seminorm property of $\V_{p,k}$ we conclude
$$
\V_{p,k}(f\ind_{|f|\le C})\le \V_{p,k}(f)+\V_{p,k}(f-f\ind_{|f|\le C})\le \V_{p,k}(f)+2\e.
$$
Using what we have shown, we get there exists and $\ell \in \mathscr{G}_{p,k}$ such that
$$
\|f\ind_{|f|\le C}-\ell\|_p\le \V_{p,k}(f\ind_{|f|\le C})\le \V_{p,k}(f)+2\e.
$$
On the other hand, we have
$$
\|f-\ell\|_p\le \|f\ind_{|f|\le C}-\ell\|_p+\|f\ind_{|f|\le C}-f\|_p\le \|f\ind_{|f|\le C}-\ell\|_p+\e,
$$
which shows that
$$
\mathscr{D}_{p,k}(f)\le \V_{p,k}(f)+3\e,
$$
and the result is shown.
\end{proof}

\begin{remark} Examples that satisfies $(iii)$ in the previous Proposition are the $\sigma$-finite measures. 
In particular, it can be applied to $\ell^p=L^p(\NN,\mathcal{P}(\NN),\delta)$, where $\delta$
is the counting measure.
But, there are non $\sigma$-finite measures that satisfies that hypothesis as well, the counting measures on any 
uncountable space. 
\end{remark}

\begin{corollary}\label{cor_limitvar}
Assume $(\Omega,\F,\mu)$ is a measurable space and $p\in[1,\infty)$. For all $f\in L^p(\Omega,\F,\mu)$
it holds that
$
\lim\limits_{k\to \infty} \V_{p,k}(f)=0.
$
\end{corollary}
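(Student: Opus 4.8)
The plan is to reduce the statement to the inequality already established in Proposition \ref{variation}(i), namely
$$
\V_{p,k}(f)\le 2\mathscr{D}_{p,k}(f),
$$
which holds for an \emph{arbitrary} measure space, and then to show that $\mathscr{D}_{p,k}(f)\to 0$ as $k\to\infty$ by a plain density argument. Once $\mathscr{D}_{p,k}(f)\to 0$ is known, the squeeze $0\le \V_{p,k}(f)\le 2\mathscr{D}_{p,k}(f)$ finishes the proof immediately.

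First I would record that the sequence $(\mathscr{D}_{p,k}(f))_{k\ge 1}$ is non-increasing, since any representation on a partition into at most $k$ sets is also a representation on a partition into at most $k+1$ sets, so $\mathscr{G}_{p,k}\subset\mathscr{G}_{p,k+1}$. Hence it is enough to produce, for each $\varepsilon>0$, a single index $k_0$ with $\mathscr{D}_{p,k_0}(f)\le\varepsilon$; monotonicity then gives $\mathscr{D}_{p,k}(f)\le\varepsilon$ for all $k\ge k_0$.

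Next I would invoke the density of integrable simple functions in $L^p(\Omega,\F,\mu)$ for $p\in[1,\infty)$ (which holds for arbitrary measure spaces: approximate $f^{+}$ and $f^{-}$ monotonically from below by simple functions and apply the \dct, dominated by $|f|^p$). Given $\varepsilon>0$ this yields a simple function $s=\sum_{i=1}^{n} a_i\ind_{A_i}$, with the $A_i$ disjoint and of finite measure whenever $a_i\neq 0$, such that $\|f-s\|_p\le\varepsilon$. The one point deserving care is that $s$ need not be supported on all of $\Omega$, so I would adjoin $A_0=\Omega\setminus\bigcup_{i=1}^{n}A_i$ with coefficient $0$. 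Because the coefficient on $A_0$ is $0$, the function $0\,\ind_{A_0}+\sum_{i=1}^{n}a_i\ind_{A_i}$ still lies in $L^p$ even when $\mu(A_0)=\infty$ — this is exactly the convention recalled in the Remark of the Introduction — and it is built on a partition of $\Omega$ into at most $n+1$ pieces, hence belongs to $\mathscr{G}_{p,n+1}$. Consequently $\mathscr{D}_{p,n+1}(f)\le\|f-s\|_p\le\varepsilon$.

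Combining these observations gives $\mathscr{D}_{p,k}(f)\to 0$, and therefore $\V_{p,k}(f)\to 0$. I do not expect any serious obstacle: the argument is essentially a one-line consequence of Proposition \ref{variation}(i). The only delicate step is the reduction to a genuine partition of $\Omega$ in the infinite-measure case, which is handled precisely by allowing the value $0$ on a (possibly infinite-measure) complementary set, so that the completed simple function remains in $\mathscr{G}_{p,n+1}\cap L^p(\Omega,\F,\mu)$.
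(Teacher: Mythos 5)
Your proof is correct and follows the same route as the paper: both rest on the bound $\V_{p,k}(f)\le 2\mathscr{D}_{p,k}(f)$ from Proposition \ref{variation}(i) together with $\mathscr{D}_{p,k}(f)\to 0$ by density of simple functions. The paper states this in one line; you merely spell out the density argument and the adjustment needed on a possibly infinite-measure complement, which is a valid and welcome elaboration of the same idea.
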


\begin{proof}
This follows directly from the previous proposition since $\lim_k\mathscr{D}_{p,k}(f)=0$ by density of the simple functions.
\end{proof}

A question of some interest is when $\V_{p,k}(f)=0$, for a function $f\in L^p(\Omega,\F,\mu)$. 
Clearly, if $f\in \mathscr{G}_{p,k}$ then $\V_{p,k}(f)=0$. The next result answers the converse.

\begin{proposition} 
\label{pro:Kernel} Let $(\Omega,\F,\mu)$ be a measure space and $p\in[1,\infty)$. Let $k\geq 1$. We have
\begin{enumerate}
    \item[(i)] if $\mu$ is a general measure, then $\mathscr{G}_{p,k}\subset \V_{p,k}^{-1}(\{0\})\subset \mathscr{G}_{p,k+1}$.
     \item[(ii)] if $\mu$ has no atoms of infinite mass, then $\V_{p,k}^{-1}(\{0\})=\mathscr{G}_{p,k}$.
\end{enumerate}
\end{proposition}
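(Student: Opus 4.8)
The plan is to derive both statements directly from the two-sided comparison between the $k$-th total $p$-variation and the approximation error $\mathscr{D}_{p,k}$ established in Proposition \ref{variation}, combined with the fact that $\mathscr{G}_{p,k}$ is proximinal (Theorem \ref{mainresult}); no new estimate is required. The underlying point is that $\V_{p,k}(f)=0$ is equivalent, up to the shift in index caused by atoms of infinite mass, to $\mathscr{D}_{p,k}(f)=0$, and the latter says exactly that $f$ lies in the closed set $\mathscr{G}_{p,k}$.

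For (i) I would argue the two inclusions separately. The inclusion $\mathscr{G}_{p,k}\subset\V_{p,k}^{-1}(\{0\})$ is immediate: if $f\in\mathscr{G}_{p,k}$ then $\mathscr{D}_{p,k}(f)=0$, since $f$ itself is admissible, and the upper bound $\V_{p,k}(f)\le 2\mathscr{D}_{p,k}(f)$ from Proposition \ref{variation}(i) forces $\V_{p,k}(f)=0$. (Alternatively one checks directly that restricting the defining partition of $f$ to any finite-measure set $A$ gives a partition on whose pieces $f$ is constant, so that $var_p(f,A)=0$.) For the reverse inclusion $\V_{p,k}^{-1}(\{0\})\subset\mathscr{G}_{p,k+1}$ I would use the lower bound $\mathscr{D}_{p,k+1}(f)\le\V_{p,k}(f)$ of Proposition \ref{variation}(i): if $\V_{p,k}(f)=0$ then $\mathscr{D}_{p,k+1}(f)=0$, and proximinality of $\mathscr{G}_{p,k+1}$ yields a minimizer $g$ with $\|f-g\|_p=0$, i.e. $f=g\in\mathscr{G}_{p,k+1}$.

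For (ii), under the hypothesis that $\mu$ has no atoms of infinite mass, Proposition \ref{variation}(iii) upgrades the comparison to $\mathscr{D}_{p,k}(f)\le\V_{p,k}(f)\le 2\mathscr{D}_{p,k}(f)$, with the same index $k$ on both sides. Hence $\V_{p,k}(f)=0$ if and only if $\mathscr{D}_{p,k}(f)=0$, which in turn holds if and only if $f\in\mathscr{G}_{p,k}$ (proximinality again producing a minimizer at distance $0$). This gives the equality $\V_{p,k}^{-1}(\{0\})=\mathscr{G}_{p,k}$.

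The proof itself is short, since the genuine analytic content has already been placed in Proposition \ref{variation}; the main thing worth stressing is therefore conceptual, namely why the index cannot be improved from $k+1$ to $k$ in (i) in general. An atom of infinite mass carries the forced value $0$ but is invisible to $\V_{p,k}$, which only probes finite-measure sets. A function equal to $0$ on such an atom and taking $k$ further distinct nonzero values on finite-measure sets thus satisfies $\V_{p,k}(f)=0$ while genuinely needing $k+1$ pieces, so it lies in $\mathscr{G}_{p,k+1}\setminus\mathscr{G}_{p,k}$. This example shows that both inclusions in (i) are sharp and that the no-atoms-of-infinite-mass hypothesis in (ii) is precisely what removes the discrepancy.
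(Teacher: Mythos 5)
Your proof is correct and follows essentially the same route as the paper: both inclusions in (i) come from the two-sided comparison $\mathscr{D}_{p,k+1}(f)\le\V_{p,k}(f)\le 2\mathscr{D}_{p,k}(f)$ of Proposition \ref{variation}(i) together with proximinality (closedness) of the sets $\mathscr{G}_{p,k}$, and (ii) follows in the same way from Proposition \ref{variation}(iii). Your closing example of an atom of infinite mass showing the index shift in (i) is sharp is also exactly the one the paper gives in the remark following the proposition.
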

\begin{proof} $(i)$. Let $f\in L^p(\Omega,\F,\mu)$ satisfying $\V_{p,k}(f)=0$. From $(i)$ of Proposition \ref{variation}, we have
$$
\mathscr{D}_{p,k+1}(f)\le \V_{p,k}(f)=0,
$$
which implies that $f=g$ a.e. for some $g\in \mathscr{G}_{p,k+1}$ (see Theorems \ref{pro:minimum} and \ref{pro:minimum2}). 
The other inclusion is obvious.

$(ii)$ The proof is similar to $(i)$ and uses $(iii)$ in Proposition \ref{variation}.
\end{proof}

\begin{remark} Notice that if $\mu$ has an atom of infinite 
mass it may happens that $\V_{p,k}(f)=0$, but $f\in \mathscr{G}_{p,k+1}\setminus \mathscr{G}_{p,k}$. Indeed, assume $\Omega=\{1,...,k+1\}$, 
where the mass of each atom in $\{1,...,k\}$ is one and the mass at atom $\{k+1\}$ is infinite. Every function
in $L^p(\Omega,\F,\mu)$ for $p\in [1,\infty)$ satisfies $f(k+1)=0$. The function $f$ given by $f(i)=i$ for
$i\in\{1,...,k\}$ and $f(k+1)=0$ belongs to $\mathscr{G}_{p,k+1}\setminus\mathscr{G}_{p,k}$. Nevertheless, $\V_{p,k}(f)=0$, 
which is exactly the case $(i)$ in Proposition \ref{pro:Kernel}. Also, this example explains why the lower bound in Proposition \ref{variation}
$(i)$ is computed over $\mathscr{G}_{p,k+1}$ and not over $\mathscr{G}_{p,k}$, in general.
\end{remark}

\section{Uniform approximability}
\label{sec:UAp}

In this section, we investigate some properties of uniformly approximable sets (see Definition \ref{def_UA}).

\subsection{Uniform integrability}

In this subsection, we prove that the class of uniform approximable sets is strictly larger than the class of uniform integrable sets. Assume that $(\Omega,\F,\mu)$ is a measure space and let $p\in[1,\infty)$. Remember that a subset $\mathscr{A}\subset L^p(\Omega,\F,\mu)$ is \textit{uniformly integrable} (in short, UI) if
$$
\inf\limits_{g\in L^p_+(\Omega,\F,\mu)} \sup\limits_{f\in \mathscr{A}} \int_{|f|>g} |f(x)|^p \ d\mu(x) =0,
$$
where $L^p_+(\Omega,\F,\mu)$ is the set of nonnegative functions in $L^p(\Omega,\F,\mu)$. Note that if $\mu$ is a finite measure, then this definition coincides with the usual one, that is $\mathscr{A}$ is UI in $L^p(\Omega,\F,\mu)$ if and only if (see \cite{Hunt}, page 254)
$$
\lim\limits_{a\to \infty} \sup\limits_{f\in \mathscr{A}} \,\,\int\limits_{\,|f(x)|\ge a} \!\!\! |f^p(x)|\, d\mu(x)=0.
$$

\begin{proposition}
\label{UI} Let $(\Omega,\F,\mu)$ be a measure space and $p\in[1,+\infty)$. Assume $\mathscr{A}\subset L^p(\Omega,\F,\mu)$ is UI. Then, $\mathscr{A}$ is UA.
\end{proposition}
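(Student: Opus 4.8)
The plan is to exploit the single dominating function furnished by uniform integrability and thereby reduce the approximation of an arbitrary $f\in\mathscr{A}$ to the discretisation of a function that is pointwise bounded by a fixed $L^p$ function. Fix $\e>0$. By the definition of UI, choose once and for all a function $g\in L^p_+(\Omega,\F,\mu)$ with $\sup_{f\in\mathscr{A}}\int_{\{|f|>g\}}|f|^p\,d\mu\le \e^p$. For each $f\in\mathscr{A}$ set $\tilde f=f\ind_{\{|f|\le g\}}$; then $\|f-\tilde f\|_p\le\e$ uniformly in $f$, and crucially $|\tilde f|\le g$ everywhere. It therefore suffices to approximate $\tilde f$ in $\mathscr{G}_{p,k}$ by a simple function whose number of values $k$ depends only on $g$ and $\e$, not on $f$.

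Next I would cut $\Omega$ into three regions governed only by $g$: the sets $\{g\le\delta\}$, $R=\{\delta<g\le M\}$ and $\{g>M\}$. On the two outer regions I approximate $\tilde f$ by $0$; the induced errors are controlled uniformly since $|\tilde f|\le g$ gives $\int_{\{g\le\delta\}}|\tilde f|^p\le\int_{\{g\le\delta\}}g^p$ and $\int_{\{g>M\}}|\tilde f|^p\le\int_{\{g>M\}}g^p$. By the \dct\ (the integrands are dominated by $g^p\in L^1$ and tend to $0$ pointwise as $\delta\downarrow 0$ and $M\uparrow\infty$ respectively), both can be made $\le\e^p$ by choosing $\delta$ small and $M$ large, depending only on $g$. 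The middle region $R$ has finite measure, $\mu(R)\le\|g\|_p^p\,\delta^{-p}<\infty$, and there $|\tilde f|\le M$.

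On $R$ I discretise the range: partition $[-M,M]$ into $N=\lceil 2M/\eta\rceil$ intervals $J_1,\dots,J_N$ of length at most $\eta$, pick $c_j\in J_j$, and set $h=\sum_{j}c_j\ind_{\tilde f^{-1}(J_j)\cap R}$, extended by $0$ off $R$. Then $|\tilde f-h|\le\eta$ on $R$, so $\int_R|\tilde f-h|^p\le\eta^p\mu(R)$, which is $\le\e^p$ after choosing $\eta$ small enough (again depending only on $g,\delta,M$). Since $h$ is bounded and supported on the finite-measure set $R$, it lies in $L^p$, takes at most $N+1$ values (the $c_j$ together with $0$), and vanishes on the possibly infinite-measure complement, so $h\in\mathscr{G}_{p,N+1}$. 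Adding the three contributions gives $\|\tilde f-h\|_p\le(3\e^p)^{1/p}$, hence by the triangle inequality $\|f-h\|_p\le\|f-\tilde f\|_p+\|\tilde f-h\|_p\le(1+3^{1/p})\e$ for every $f\in\mathscr{A}$ with the single index $k=N+1$. Since $\e$ is arbitrary this yields $\sup_{f\in\mathscr{A}}\mathscr{D}_{p,k}(f)\to0$, i.e.\ $\mathscr{A}$ is UA.

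The step I expect to be the main obstacle is making all three error bounds uniform in $f$ while keeping $k$ independent of $f$. The delicate point is the region where $g$ is small, which may carry infinite measure, so that one cannot discard it by a crude measure estimate but must instead invoke $\int_{\{g\le\delta\}}g^p\to0$ via dominated convergence; this is precisely what uniform integrability buys and what separates the argument from the naive truncation that would suffice in the finite-measure case.
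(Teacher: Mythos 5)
Your proposal is correct and follows essentially the same route as the paper's proof: both use the dominating function $g$ from uniform integrability to discard the set $\{|f|>g\}$, restrict to the finite-measure region where $g$ is bounded away from $0$ and $\infty$ (your $\{\delta<g\le M\}$ versus the paper's $\{1/n\le g\le n\}$), discretise the bounded range there into uniformly many levels, and approximate by $0$ elsewhere. The only differences are cosmetic (two cutoff parameters instead of one, error budget $3\e^p$ instead of $\e^p/3$ per piece).
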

\begin{proof} Consider $\e>0$, and take $g\in L^p_+(\Omega,\F,\mu)$, such that
$$
\sup\limits_{f\in \mathscr{A}} \int_{|f|>g} |f(x)|^p \ d\mu(x) \le \frac{\e^p}{3}.
$$
Fix $f\in\mathscr{A}$. Consider $n\in \NN$, large enough such that $\int_{g>n} g^p(x) \ d\mu(x)+\int_{g<\frac{1}{n}} g^p(x) \ d\mu(x)\le \frac{\e^p}{3}$. 
The set $B_n=\left\{x: \ \frac{1}{n} \le g(x) \le n\right\}\cap \{x: |f(x)|\le g(x)\}\subset C_n=\left\{x: \ \frac{1}{n} \le g(x) \le n\right\}$ has finite measure. 
Notice that over $B_n$ we have 
$|f|\le n$. Take now $k\ge 2$ such that $\left(\frac{n}{k}\right)^p \mu(C_n)\le \frac{\e^p}{3}$ and define
$$
A_i=B_n\cap\left\{x: \frac{ni}{k}\le f(x) < \frac{n(i+1)}{k}\right\}
$$
for $i\in\{-k,...,k-2\}$, $A_{k-1}=B_n\cap\left\{x: \frac{n(k-1)}{k}\le f(x) \le n\right\}$ and 
$$
h=\sum_{i=-k}^{k-1} \frac{in}{k}\ind_{A_i}+0\ind_{B_n^c}\in \mathscr{G}_{p,2k+1}.
$$ 
Then, we have
$$
\begin{array}{ll}
\int_{B_n} |f(x)-h(x)|^p \ d\mu(x)&\hspace{-0.2cm}=
\sum\limits_{i=-k}^{k-1} \int_{A_i} |f(x)-h(x)|^p \ d\mu(x)
\le \left(\frac{n}{k}\right)^p \sum\limits_{i=-k}^{k-1} \mu(A_i)\\
&\hspace{-0.2cm}\le \left(\frac{n}{k}\right)^p \mu(B_n)\le \left(\frac{n}{k}\right)^p \mu(C_n)\leq \frac{\e^p}{3} .
\end{array}
$$
On the other hand, $B_n^c=(C_n^c\cap \{x: |f(x)|\le g(x)\})\cup \{x: |f(x)|> g(x)\}$ and so
$$
\begin{array}{ll}
\int_{B_n^c} |f(x)-h(x)|^p \ d\mu(x)&\hspace{-0.2cm}=\int_{B_n^c} |f(x)|^p \ d\mu(x)=\int_{C_n^c \cap |f|\le g} |f(x)|^p \ d\mu(x)+
\int_{|f|>g} |f(x)|^p \ d\mu(x)\\
\\
&\hspace{-0.2cm}\le \int_{C_n^c} g^p(x) \ d\mu(x)+\int_{|f|>g} |f(x)|^p \ d\mu(x)\le \frac{2\e^p}{3}
\end{array}
$$
Finally, we have $\|f-h\|_p\le \e$, and the result is shown.
\end{proof}

\begin{remark}
\label{not_UI}
Note that the converse of Proposition \ref{UI} is not true in general. In fact $\mathscr{G}_{p,2}$ is UA, but this
set is not UI in $L^p(\Omega,\F,\mu)$ in general. Indeed, assume the space has finite measure and there exists a sequence 
$(B_n)_n$ of measurable sets with positive 
measure such that $\mu(B_n)\to 0$. Then $f_n=\mu(B_n)^{-1/p}\ind_{B_n}$ belongs to $\mathscr{G}_{p,2}$, each one has norm 1 and the 
subfamily $(f_n)_n$ is not UI, since for all $a\ge 0$, we have
$$
\sup_n \int\limits_{f_n>a} f_n^p \, d\mu=1.
$$
The conclusion is that UA is weaker than UI.
\end{remark}

\begin{remark}
If $(\Omega,\F,\mu)$ is a finite measure space, the following examples are UI in $L^p(\Omega,\F,\mu)$, for $p\in [1,\infty)$,
\begin{enumerate}
    \item[-] $\mathscr{A}$ is bounded in $L^q(\Omega,\F,\mu)$ for some $q>p$;
    \item[-] $\mathscr{A}$ is bounded by a fixed function $g\in L^p(\Omega,\F,\mu)$.
\end{enumerate}
\end{remark}

The following result can be prove using the fact that totally boundedness implies UI in $L^p(\Omega,\F,\mu)$ for $p\in [1,\infty)$. 
However, since the case $p=\infty$ needs a proof, we give a more direct argument:

\begin{proposition}
\label{totally bounded}
Let $(\Omega,\F,\mu)$ be a measure space and $p\in[1,+\infty]$. 
If $\mathscr{A}$ is totally bounded in $L^p(\Omega,\F,\mu)$ then $\mathscr{A}$ is UA.
\end{proposition}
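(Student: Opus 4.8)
The plan is to exploit the density of finitely-valued simple functions in $L^p(\Omega,\F,\mu)$ together with the finiteness of an $\e$-net. The key observation is that uniform approximability requires only a \emph{single} index $k$ that works simultaneously for every $f\in\mathscr{A}$; total boundedness is precisely what upgrades the pointwise density of simple functions into such a uniform statement. This also explains why the argument is given directly rather than through uniform integrability: the same reasoning will cover $p=\infty$ verbatim, whereas uniform integrability is unavailable there.

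First I would fix $\e>0$ and use total boundedness to produce a finite $\tfrac{\e}{2}$-net $\{f_1,\dots,f_m\}\subset\mathscr{A}$, so that every $f\in\mathscr{A}$ satisfies $\|f-f_j\|_p\le \tfrac{\e}{2}$ for some $j\in\{1,\dots,m\}$. Next, since each individual $f_j\in L^p(\Omega,\F,\mu)$ can be approximated in $\|\cdot\|_p$ by a simple function taking finitely many values — this holds for every $p\in[1,\infty]$, including $p=\infty$, where an essentially bounded function is uniformly approximated by partitioning its essential range into intervals of length $<\tfrac{\e}{2}$ — I would choose $h_j\in\mathscr{G}_{p,k_j}$ with $\|f_j-h_j\|_p\le\tfrac{\e}{2}$ for each $j$.

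Setting $k=\max_{1\le j\le m}k_j$, so that $h_j\in\mathscr{G}_{p,k}$ for all $j$, the triangle inequality gives, for any $f\in\mathscr{A}$ with associated net point $f_j$,
$$
\|f-h_j\|_p\le \|f-f_j\|_p+\|f_j-h_j\|_p\le \e .
$$
Hence $N_{p,\e}(\mathscr{A})\le k<\infty$, and since $\e>0$ was arbitrary, $\mathscr{A}$ is UA.

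There is no genuine obstacle in this argument: the only point requiring care is that $k$ be selected \emph{after} the finite net, so that it dominates all the $k_j$ at once. It is here, and only here, that the finiteness of the net — rather than mere density of the simple functions in $L^p(\Omega,\F,\mu)$ — is used; without it one would obtain an index depending on $f$ and could not bound $N_{p,\e}(\mathscr{A})$ uniformly.
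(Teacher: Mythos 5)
Your argument is correct and is essentially identical to the paper's own proof: a finite $\varepsilon$-net from total boundedness, a simple-function approximant for each net point by density of $\bigcup_k\mathscr{G}_{p,k}$ in $L^p(\Omega,\F,\mu)$ (valid for all $p\in[1,\infty]$), a single $k$ dominating the finitely many indices, and the triangle inequality. The only difference is the cosmetic choice of $\varepsilon/2$ in place of $\varepsilon$, which yields the bound $\varepsilon$ instead of $2\varepsilon$.
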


\begin{proof} Let $\varepsilon>0$. There exist finitely many functions $f_1,...,f_n$ such that
$\mathscr{A}\subset\bigcup_{j=1}^n B(f_j,\varepsilon)$. By density of the simple functions, there exist $k\in\mathbb N$ and
$g_j\in\mathscr{G}_{p,k}$ such that $\|f_j-g_j\|_p\le \varepsilon$ for all $j\in\{1,...,n\}$. Now if $f\in\mathscr{A}$ then, there exists
$j_0=j_0(f)\in\{1,...,n\}$ such that $\|f-f_{j_0}\|_p\leq\varepsilon$. 
It follows that 
$$
\|f-g_{j_0}\|_p\leq\|f-f_{j_0}\|_p+\|f_{j_0}-g_{j_0}\|_p\le 2\varepsilon
$$ 
and the proof is complete.
\end{proof}

\subsection{Characterization of the uniform approximability}

If $M$ is a metric space, we recall that the covering numbers of $M$ are defined 
for every $\varepsilon>0$ by 
$$\mathcal{N}(M,\varepsilon)=\inf\left\{N\geq 1\ :\ M\ 
\text{can be covered by}\ N\ \text{closed balls of radius}\ \varepsilon\right\}.
$$ 
For more informations about covering numbers and its applications to Machine Learning, we refer the reader to \cite{Wainwright} and \cite{Anthony}.

If $(\Omega,\F,\mu)$ is a measure space, we define the covering numbers of a measurable function $f$ by 
$$
\mathcal{N}(f,\varepsilon)=\inf\{\mathcal{N}(g(\Omega),\varepsilon)\ :\ g\ \text{measurable function such that}\ f=g\ \text{a.e.}\}.
$$
This notion allows us to caracterize the uniform approximability in 
$L^\infty(\Omega,\F,\mu)$ in terms of uniformly bounded covering numbers. 
Before doing that, we notice that if $f\in L^\infty(\Omega,\F,\mu)$, then 
$\mathcal{N}(f,\varepsilon)<\infty$. Indeed, we know that 
$|f|\le \|f\|_\infty$ holds a.e., so by considering $g=f\ind_{\{|f|\le
\|f\|_\infty\}}$, we have $g=f$ a.e. and
$$
\mathcal{N}(f,\varepsilon)\le \frac{2}{\e}\|f\|_\infty+1.
$$
If $f$ is a measurable function and 
$\mathcal{N}(f,\varepsilon)<\infty$ then $f\in L^\infty(\Omega,\F,\mu)$. 
On the other hand, by definition of infimum, there exists a measurable function 
$g$ such that $f=g$ a.e., and
$$
\mathcal{N}(f,\varepsilon)\le \mathcal{N}(g(\Omega),\varepsilon)
\le \mathcal{N}(f,\varepsilon)+\frac12,
$$
showing that $\mathcal{N}(f,\varepsilon)= \mathcal{N}(g(\Omega),\varepsilon)$.

\begin{theorem}\label{L_infty}
Let $(\Omega,\F,\mu)$ be a measure space and let $\mathscr{A}\subset L^\infty(\Omega,\F,\mu)$. 
The following assertions are equivalent:
\begin{enumerate}
    \item[(i)] $\mathscr{A}$ is UA;
    \item[(ii)] $\sup_{f\in\mathscr{A}}\mathcal{N}(f,\varepsilon)<\infty$ for all $\varepsilon>0$.
\end{enumerate}
In this case, we have that $N_{\infty,\varepsilon}(\mathscr{A})= \sup_{f\in\mathscr{A}}\mathcal{N}(f,\varepsilon)$ 
for all $\varepsilon>0$.
\medskip
\end{theorem}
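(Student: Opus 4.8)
The plan is to reduce the set-level identity to a single-function equivalence and then perform elementary bookkeeping with integer-valued quantities.

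\smallskip
\noindent\textbf{Step 1 (per-function equivalence).} First I would establish that for every $f\in L^\infty(\Omega,\F,\mu)$, every $k\ge 1$ and every $\varepsilon>0$,
$$
\mathscr{D}_{\infty,k}(f)\le\varepsilon \iff \mathcal{N}(f,\varepsilon)\le k .
$$
The preceding Lemma gives $\mathscr{D}_{\infty,k}(f)=\eta_k(f)$, so both directions amount to comparing $\eta_k(f)$ with $\varepsilon$ through the covering number. For the implication $\Leftarrow$, if $\mathcal{N}(f,\varepsilon)\le k$ then there is $g=f$ a.e.\ whose range is covered by at most $k$ closed balls of radius $\varepsilon$; feeding this $g$ and the value $\alpha=\varepsilon$ into the definition of $\eta_k(f)$ yields $\eta_k(f)\le\varepsilon$, hence $\mathscr{D}_{\infty,k}(f)\le\varepsilon$.

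\smallskip
\noindent\textbf{Step 2 (attainment, the main point).} For the converse $\Rightarrow$ I would invoke proximinality (Proposition \ref{pro:minimum3}): there exists $g=\sum_{i=1}^{l}a_i\ind_{B_i}\in\mathscr{G}_{\infty,k}$ with $l\le k$ and $\|f-g\|_\infty=\mathscr{D}_{\infty,k}(f)\le\varepsilon$. Redefining $f$ on the null set where $|f-g|>\mathscr{D}_{\infty,k}(f)$ (say, setting it equal to $a_1$ there) produces $\widetilde f=f$ a.e.\ with $\widetilde f(\Omega)\subset\bigcup_{i=1}^{l}\bar B(a_i,\varepsilon)$, whence $\mathcal{N}(f,\varepsilon)\le\mathcal{N}(\widetilde f(\Omega),\varepsilon)\le k$. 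This is the only genuinely delicate point: the bare inequality $\eta_k(f)\le\varepsilon$ only furnishes coverings of radius $\alpha$ that may exceed $\varepsilon$ by an arbitrarily small amount, and it is exactly the \emph{attainment} of the distance (proximinality) that upgrades this to an honest radius-$\varepsilon$ covering.

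\smallskip
\noindent\textbf{Step 3 (bookkeeping) and conclusion.} Next I would rewrite $N_{\infty,\varepsilon}(\mathscr{A})=\inf\{k\ge 1:\ \forall f\in\mathscr{A},\ \mathscr{D}_{\infty,k}(f)\le\varepsilon\}$, using again that proximinality makes ``$\mathscr{D}_{\infty,k}(f)\le\varepsilon$'' equivalent to the existence of $h\in\mathscr{G}_{\infty,k}$ with $\|f-h\|_\infty\le\varepsilon$; the set of admissible $k$ is upward closed since $\mathscr{D}_{\infty,k}(f)$ decreases in $k$. By Step 1 this becomes $N_{\infty,\varepsilon}(\mathscr{A})=\inf\{k\ge 1:\ \sup_{f\in\mathscr{A}}\mathcal{N}(f,\varepsilon)\le k\}$. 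Writing $S=\sup_{f\in\mathscr{A}}\mathcal{N}(f,\varepsilon)$, note that each $\mathcal{N}(f,\varepsilon)$ is a positive integer, so $S$ is either a positive integer (a bounded set of integers attains its maximum) or $\infty$; in both cases $\inf\{k\ge 1:\ k\ge S\}=S$, with the convention $\inf\emptyset=\infty$ when $S=\infty$. Therefore $N_{\infty,\varepsilon}(\mathscr{A})=\sup_{f\in\mathscr{A}}\mathcal{N}(f,\varepsilon)$, which is the claimed equality. The equivalence $(i)\Leftrightarrow(ii)$ is then immediate: $\mathscr{A}$ is UA iff $N_{\infty,\varepsilon}(\mathscr{A})<\infty$ for all $\varepsilon>0$, which by the formula is precisely $\sup_{f\in\mathscr{A}}\mathcal{N}(f,\varepsilon)<\infty$ for all $\varepsilon>0$.
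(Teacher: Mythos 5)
Your argument is correct, but it reaches the theorem by a genuinely different route from the paper's. The paper proves $(i)\Rightarrow(ii)$ from scratch: given $f\in\mathscr{A}$ with $\mathcal{N}(f,\varepsilon)$ large, it checks that each interval of a minimal covering of $f(\Omega)$ has preimage of positive measure, extracts a maximal $\varepsilon$-separated subfamily of cardinality at least $\mathcal{N}(f,\varepsilon)/5$, and counts ``unmarked'' intervals to conclude that every $g\in\mathscr{G}_{\infty,k}$ has $\|f-g\|_\infty>\varepsilon$; the converse implication is then obtained by disjointifying the covering intervals by hand, and the equality is finished by a separate attainment argument. You instead route everything through two results already available from Section 2.3 --- the identity $\mathscr{D}_{\infty,k}(f)=\eta_k(f)$ and the proximinality of $\mathscr{G}_{\infty,k}$ --- and reduce the theorem to the per-function equivalence $\mathscr{D}_{\infty,k}(f)\le\varepsilon\iff\mathcal{N}(f,\varepsilon)\le k$, after which the set-level identity is integer bookkeeping. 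Your Step 2 (redefining $f$ on the null set where $|f-g|$ exceeds the attained distance, so that the level sets of a minimizer yield an honest radius-$\varepsilon$ covering) is essentially the device the paper uses only in the last paragraph of its proof to get $\sup_{f\in\mathscr{A}}\mathcal{N}(f,\varepsilon)\le N_{\infty,\varepsilon}(\mathscr{A})$, and you correctly isolate attainment as the one delicate point: $\eta_k(f)\le\varepsilon$ alone only furnishes coverings of radius $\varepsilon+\delta$. What the paper's longer combinatorial argument buys is a proof of the hard implication that does not invoke Proposition \ref{pro:minimum3}; what yours buys is brevity and a cleaner logical structure (it also makes visible that the attainment argument alone already gives the sharp inequality, without the factor-of-ten loss of the separated-family count), at the price of making the theorem depend on the existence of minimizers. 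Both proofs are sound.
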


\begin{proof} 
Let $\varepsilon>0$ and suppose $\sup_{f\in\mathscr{A}}\mathcal{N}(f,\varepsilon)=\infty$. 
Fix $k\geq 1$ and  choose $f\in\mathscr{A}$ such that $\mathcal{N}(f,\varepsilon)\ge 10(k+1)$. 
Changing the representant of $f$ is necessary, we can suppose that 
$m_\varepsilon:=\mathcal{N}(f,\varepsilon)=\mathcal{N}(f(\Omega),\varepsilon)$. 
So there exists a collection $\mathcal{J}$ of closed balls $\{I_i=[a_i,b_i]\}_{1\leq i\leq m_\e}$, 
of radius $\e$, such that 
$$
f(\Omega)\subset\bigcup_{i=1}^{m_\varepsilon} I_i
$$ 
Using the minimality of this covering each interval cannot be covered by the 
other intervals, so for $i\neq j$ we have $I_i\setminus I_j\neq \emptyset$. Consider the
measurable sets $A_i=f^{-1}(I_i)$. We shall prove that the minimality of $\mathcal{J}$ implies that
$\mu(A_i)>0$. Indeed, assume that for some $i$ we have $\mu(A_i)=0$. Take any $j\neq i$ (notice that we have assumed
that $m_\e$ is at least $10(k+1)>2$) and 
$a\in I_j\setminus I_i$. The measurable function
$$
h=f\ind_{A_i^c}+a\ind_{A_i}
$$ 
coincides with $f$ up to measure $0$ and $h(\Omega)\subset\bigcup\limits_{r\neq i} I_r$, so 
$\mathcal{N}(h(\Omega),\varepsilon)\le m_\e-1$, which is a contradiction. 

We say that a subcollection $\mathcal{C}\subset \mathcal{J}$ is $\e$-separated if for two
different intervals $I,J \in \mathcal{C}$, we have the distance between them $d(I,J)$ is
greater than $\e$. Notice that a collection with only one interval from $\mathcal{J}$ is
$\e$-separated. Take $\mathcal{C}^*$ a maximal $\e$-separated
subcollection with respect to inclusion. Now, if $I\in\mathcal{J} \setminus \mathcal{C}^*$ 
there exists an interval
$L=[a,b]\in \mathcal{C}^*$ such that $d(I,L)\le \e$, otherwise the maximality of 
$\mathcal{C}^*$ is contradicted.
Then,
$$
I\subset [a-4\e,a-2\e]\cup [a-2\e,a]\cup [a,b]\cup [b,b+2\e]\cup [b+2\e,b+4\e],
$$
showing that the collection
$$
\mathcal{D}=\{[a_i-4\e,a_i-2\e],[a_i-2\e,a_i],[a_i,b_i],[b_i,b_i+2\e],
[b_i+2\e,b_i+4\e]: [a_i,b_i]\in \mathcal{C}^*\},
$$
is a covering of $f(\Omega)$ with closed balls of radius $\e$. Therefore
$$
m_\e\le |\mathcal{D}|\le 5 |\mathcal{C}^*|,
$$
showing that $n=|\mathcal{C}^*|\ge \frac15 m_\e$ (here $|\mathcal{C}^*|$
is the cardinal of $\mathcal{C}^*$).
\medskip

Consider now $g\in \mathscr{G}_{\infty,k}$. We say that an interval $I\in \mathcal{C}^*$ is unmarked if $d(g(\Omega),I)> \e$. 
There are at least
$n-2k\ge \frac15 m_\e-2k=2(\frac{1}{10} m_\e-k)>1$ unmarked intervals in $\mathcal{C}^*$. 
Consider $I_i\in \mathcal{C}^*$ 
any unmarked interval, then for all $x\in A_i=f^{-1}(I_i)$, we have
$$
|f(x)-g(x)|> \e.
$$
Since $\mu(A_i)>0$, we conclude that $\|f-g\|_\infty>\e$ and therefore 
$N_{\infty,\varepsilon}(\mathscr{A})> \frac{m_\e}{10}-1$, showing that 
$N_{\infty,\varepsilon}(\mathscr{A})=\infty$. So we have proved that $(i)$ implies $(ii)$.
\medskip

Now, let us show that $(ii)$ implies $(i)$. So, we are assuming that
$M_\varepsilon=\sup_{f\in\mathscr{A}}\mathcal{N}(f,\varepsilon)<\infty$ for all
$\varepsilon>0$. Fix $\varepsilon>0$ and let $f\in\mathscr{A}$. Suppose that
$m_\varepsilon:=\mathcal{N}(f,\varepsilon)=\mathcal{N}(f(\Omega),\varepsilon)\leq
M_\varepsilon$. Again we can write
$$
f(\Omega)\subset\bigcup_{i=1}^{m_\varepsilon}I_i
$$ 
where $I_i=[a_i,b_i]$ are closed balls of radius $\e$. We assume that the left extremes 
are ordered increasingly: $a_1<a_2< ... <a_{m_\e}$. We define recursively  $\tilde 
a_1=a_1, \tilde b_1=b_1$ and for $i\ge 2$
$$
\tilde a_i=\max\{b_{i-1}, a_i\},\, \tilde b_i=b_i.
$$
Define $\tilde I_i=[\tilde a_i,\tilde b_i]$ for $i\in\{1,..., m_\e\}$.
The fact that every interval $I_i$ cannot be covered by the intervals $\{I_j\}_{j\neq i}$
allows us to show the following facts about the new intervals $\{\tilde I_i\}_{1\leq i\leq m_\e}$
$$
\begin{array}{l}
\forall i\,\ \tilde I_i=I_i\setminus \bigcup\limits_{j=1}^{i-1} [a_j,b_j), \, 
\forall i\, \bigcup\limits_{j=1}^{i} I_j=\bigcup\limits_{j=1}^{i} \tilde I_j,\\
\\
\text{int}(\tilde I_i)=(\tilde a_i,\tilde b_i)\neq \emptyset,\\
\\
\forall i<j: \  \tilde I_i \cap \tilde I_j\subset 
\begin{cases} \emptyset &\hbox{if } j-i\ge 2\\
\\
              \{\tilde b_i\} &\hbox{if } j=i+1
\end{cases}.
\end{array}
$$
Thus, $\{\tilde I_i\}_{1\leq i\leq m_\e}$ is a collection of closed balls of radii at most $\e$, that 
covers $f(\Omega)$, which are disjoint except for consecutive intervals that can intersects at one 
extreme.
\medskip

With this new intervals we can produce a partition of $f(\Omega)$, by choosing
$\widehat I_1=\tilde I_1$ and for $i\ge 2$
$$
\widehat I_i=\begin{cases} (\tilde a_i,\tilde b_i] &\hbox{if } \tilde I_{i}\cap\tilde I_{i-1}\neq \emptyset\\
\\
\tilde I_i &\hbox{otherwise}
\end{cases}
$$
We now define $\widehat A_i=f^{-1}(\widehat I_i)$, which is a partition of $\Omega$ (maybe some of
them are empty). If $x\in \widehat A_i$ then 
$f(x) \in \widehat I_i\subset I_i=[a_i,b_i]$ and therefore $|f(x)-\frac{a_i+b_i}{2}|\le \e$. 
Define the simple function
$$
g(x)=\sum\limits_{i=1}^{m_\e} \frac{a_i+b_i}{2} \, \ind_{\widehat A_i},
$$
that belongs to $\mathscr{G}_{\infty,m_\e}$ and satisfies for all $x\in \Omega$
$$
|f(x)-g(x)|\le \e,
$$
showing that $\|f-g\|_\infty\le \e$. We conclude that 
$N_{\infty,\e}(\mathscr{A})\le \sup_{f\in\mathscr{A}}\mathcal{N}(f,\varepsilon)$.
\medskip

To finish, we prove that $N_{\infty,\e}(\mathscr{A})= \sup_{f\in\mathscr{A}}\mathcal{N}(f,\varepsilon)$.
For that purpose consider $k=N_{\infty,\e}(\mathscr{A})$, which means
that for all $f\in \mathscr{A}$, there exists $g\in \mathscr{G}_{\infty,k}$, such that
$\|f-g\|_\infty\le \e$. We assume that $g=\sum_{i=1}^k c_i \ind_{B_i}$, where $(B_i)_{i=1}^k$ is
a partition of $\Omega$. For any $i\in\{1,...,k\}$ we have 
$$
\|(f-c_i)\ind_{B_i}\|_\infty \le \|f-g\|_\infty\le \e,
$$
which means that $A_i=\{x\in B_i:\, |f(x)-c_i|>\e\}$ is a measurable set of measure $0$. 
Since $\mu(\Omega)>0$, not all the sets $B_j$ can have measure $0$, so we assume without 
loss of generality that $\mu(B_1)>0$. Consider 
$B=\Omega\setminus \bigcup\limits_{i=1}^k A_i$, 
$h=f\ind_B+c_1\ind_{B^c}$ and $\widetilde g=c_1\ind_{B_1\cup B^c}+\sum_{i=2}^k c_i \ind_{B_i\setminus A_i}$.
We notice that $f=h$ a.e. and $\widetilde g=g$ a.e. On the other hand, 
$B_1\cup B^c, B_2\setminus A_2,..., B_k\setminus A_k$ is a partition and $\widetilde g\in \mathscr{G}_{\infty,k}$. 
Also, it is clear that
$B_1\setminus A_1,..., B_k\setminus A_k, B^c$ is a partition and
$$
\widetilde g= g\ind_B+c_1\ind_{B^c}.
$$
With these modifications, we have for all $x\in \Omega$
$$
|h(x)-\widetilde g(x)|\le \e.
$$
This is clear for $x\in B$. For $x\in B^c$, we have $h(x)=c_1=\widetilde g(x)$ and the claim is shown. 
Finally, the collection
of closed ball of radius $\e$ given by: $\{[c_i-\e,c_i+\e]\}_{1\leq i\leq k}$ is an 
$\e$-cover of $h(\Omega)$, showing that $\mathcal{N}(f,\varepsilon)\le k$. The conclusion is that
$$
\sup_{f\in\mathscr{A}}\mathcal{N}(f,\varepsilon)\le k=N_{\infty,\e}(\mathscr{A}),
$$
and the result is shown.
\end{proof}

\begin{corollary}\label{UA_lq}
Let $(\Omega,\F,\mu)$ be a finite measure space and $\mathscr{A}$ be a 
set of measurable functions. Assume that $\mathscr{A}$ is UA in $L^q(\Omega,\F,\mu)$ for
some $q\in [1,\infty]$, then $\mathscr{A}$ is UA in $L^p(\Omega,\F,\mu)$ for
all $p\in [1,q]$ and for all $\varepsilon>0$ it holds 
$$
N_{p,\varepsilon}(\mathscr{A})\leq N_{q,\varepsilon\mu(\Omega)^{-r}}(\mathscr{A}),
$$
where $r=\frac{1}{p}-\frac{1}{q}$.
\medskip

In particular if $\sup_{f\in\mathscr{A}}\mathcal{N}(f,\varepsilon)<\infty$ for all 
$\varepsilon>0$, then $\mathscr{A}$ is UA in $L^p(\Omega,\F,\mu)$ for all 
$p\in [1,\infty]$ and for all $\varepsilon>0$ it holds $N_{p,\varepsilon}(\mathscr{A})\leq
\sup_{f\in\mathscr{A}}\mathcal{N}(f,\varepsilon\mu(\Omega)^{\frac{-1}{p}})$. 
\end{corollary}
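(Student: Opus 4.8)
The plan is to exploit the nesting of $L^q$ inside $L^p$ on a finite measure space. The key analytic input is the Hölder (or Jensen) inequality: for any measurable $\varphi$ and $1\le p\le q\le\infty$ one has $\|\varphi\|_p\le \mu(\Omega)^{r}\|\varphi\|_q$ with $r=\frac1p-\frac1q$ (for $q=\infty$ this reads $\|\varphi\|_p\le\mu(\Omega)^{1/p}\|\varphi\|_\infty$). The second, equally important, observation is that since $\mu$ is finite the sets $\mathscr{G}_{p,k}$ do not depend on the exponent: by the Remark following the definition of $\mathscr{G}_{p,k}$, $\mathscr{G}_{q,k}=\mathscr{G}_{1,k}=\mathscr{G}_{p,k}$ is simply the collection of simple functions taking at most $k$ values. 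Hence one and the same function $h$ can serve as an approximant in every $L^p$ at once.

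With these two facts the inequality $N_{p,\varepsilon}(\mathscr{A})\le N_{q,\varepsilon\mu(\Omega)^{-r}}(\mathscr{A})$ is immediate. If the right-hand side is infinite there is nothing to prove, so set $k=N_{q,\varepsilon\mu(\Omega)^{-r}}(\mathscr{A})<\infty$. By definition, for every $f\in\mathscr{A}$ there is $h\in\mathscr{G}_{q,k}$ with $\|f-h\|_q\le \varepsilon\mu(\Omega)^{-r}$. Applying the Hölder estimate to $\varphi=f-h$ gives $\|f-h\|_p\le\mu(\Omega)^{r}\|f-h\|_q\le\varepsilon$, and since $h\in\mathscr{G}_{q,k}=\mathscr{G}_{p,k}$, the same $k$ witnesses the approximation in $L^p$. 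Thus $N_{p,\varepsilon}(\mathscr{A})\le k$. In particular, if $\mathscr{A}$ is UA in $L^q$ then $N_{q,\varepsilon\mu(\Omega)^{-r}}(\mathscr{A})<\infty$ for every $\varepsilon>0$, whence $N_{p,\varepsilon}(\mathscr{A})<\infty$ for every $\varepsilon>0$ and $\mathscr{A}$ is UA in $L^p$.

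For the final assertion I would specialise to $q=\infty$, so that $r=\frac1p$. If $\sup_{f\in\mathscr{A}}\mathcal{N}(f,\varepsilon)<\infty$ for all $\varepsilon>0$, then Theorem \ref{L_infty} shows that $\mathscr{A}$ is UA in $L^\infty$ and that $N_{\infty,\varepsilon}(\mathscr{A})=\sup_{f\in\mathscr{A}}\mathcal{N}(f,\varepsilon)$. Feeding this into the inequality just proved yields, for every $p\in[1,\infty]$,
$$
N_{p,\varepsilon}(\mathscr{A})\le N_{\infty,\varepsilon\mu(\Omega)^{-1/p}}(\mathscr{A})=\sup_{f\in\mathscr{A}}\mathcal{N}(f,\varepsilon\mu(\Omega)^{-1/p}),
$$
as claimed, the case $p=\infty$ being the trivial instance $r=0$.

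There is no genuine obstacle here: the statement is a soft consequence of the $L^p$-inclusions on a finite measure space. The only points requiring a moment's care are the verification that $\mathscr{G}_{q,k}$ and $\mathscr{G}_{p,k}$ really coincide — which is exactly where finiteness of $\mu$ is used, and which fails for infinite measures — and the correct bookkeeping of the constant $\mu(\Omega)^{\pm r}$, so that a tolerance $\varepsilon\mu(\Omega)^{-r}$ measured in $L^q$ is converted into tolerance $\varepsilon$ in $L^p$. The degenerate cases $p=q$ and $q=\infty$ should be checked for consistency, as done above.
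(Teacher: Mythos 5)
Your proof is correct and follows essentially the same route as the paper's: Hölder's inequality converts an $L^q$-approximation with tolerance $\varepsilon\mu(\Omega)^{-r}$ into an $L^p$-approximation with tolerance $\varepsilon$, using that $\mathscr{G}_{q,k}=\mathscr{G}_{p,k}$ on a finite measure space, and the last assertion is the case $q=\infty$ via Theorem \ref{L_infty}. Your explicit attention to the coincidence of the $\mathscr{G}_{p,k}$ across exponents is a point the paper leaves implicit, but the argument is the same.
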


\begin{proof} This is a direct consequence of H\"older's inequality. In fact, assume that $p\le q$ and consider
$g\in \mathscr{G}_{p,k}, f\in \mathscr{A}$ then, we have
$$
\|f-g\|_p\le \|f-g\|_q (\mu(\Omega))^{r}
$$
where $r=\frac{1}{p}-\frac{1}{q}$. From this it follows that $\mathscr{A}$ is UA in $L^p(\Omega,\F,\mu)$
and 
$$
N_{p,\varepsilon}(\mathscr{A})\leq N_{q,\varepsilon\mu(\Omega)^{-r}}(\mathscr{A}).
$$

The second assertion follows from Theorem \ref{L_infty}
\end{proof}

The previous result gives a large class of UA sets when the measure is finite. 
For example suppose that $\Omega$ is a bounded metric space, $\F$ is the Borel
$\sigma$-algebra and $\mu$ is a finite measure on $\F$. Then the set of $1$-Lipschitz
functions is UA in $L^p(\Omega,\F,\mu)$ for any $p\in[1,+\infty]$.\\

The following result is a characterization of UA in $L^p$ for $p\in [1,\infty)$, where we shall prove that a class is UA if and only 
$\V_{p,k}(f)$ converges toward $0$, when $k\to \infty$, uniformly in the class. 

\begin{theorem}
Let $(\Omega,\F,\mu)$ be a measure space, $p\in[1,\infty)$ and let $\mathscr{A}\subset L^p(\Omega,\F,\mu)$. 
Then, the following are equivalent
\begin{itemize}
\item[(i)] $\mathscr{A}$ is UA in $L^p(\Omega,\F,\mu)$;
\item[(ii)] $\lim_{k\to \infty} \sup_{f\in \mathscr{A}} \V_{p,k}(f) =0$.
\end{itemize}
In this case if we define $r_\varepsilon(\mathscr{A})=\min\{k\in\NN\ :\ \sup_{f\in \mathscr{A}} 
\V_{p,k}(f)\le\varepsilon\}$, we have that for all $\varepsilon>0$
$$
r_{2\varepsilon}(\mathscr{A})\leq 
N_{p,\varepsilon}(\mathscr{A})\leq r_{\varepsilon}(\mathscr{A})+1.
$$ 

Moreover, if the measure $\mu$ is finite both properties (i), (ii)  are equivalent to
\begin{itemize}
\item[(iii)] $\lim_{k\to \infty} \sup_{f\in \mathscr{A}} \V_{p,k}(f,\Omega) =0.$
\end{itemize}
In this case if we define $m_\e(\mathscr{A})=\min\{k\in\NN\ :\ \sup_{f\in \mathscr{A}} 
\V_{p,k}(f,\Omega)\le\varepsilon\}$, we have that for all $\varepsilon>0$
$$
m_{2\varepsilon}(\mathscr{A})\leq r_{2\e}(\mathscr{A})\leq  
N_{p,\varepsilon}(\mathscr{A})\leq m_{\varepsilon}(\mathscr{A})\le r_{\varepsilon}(\mathscr{A})
$$ 
\end{theorem}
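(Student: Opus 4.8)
The plan is to derive everything from Proposition \ref{variation}, which already relates $\mathscr{D}_{p,k}(f)$ and $\V_{p,k}(f)$ \emph{pointwise} in $f$; the theorem is then obtained simply by taking suprema over $\mathscr{A}$. First I would fix notation by writing, for $k\ge 1$, $S_k=\sup_{f\in\mathscr{A}}\mathscr{D}_{p,k}(f)$, $V_k=\sup_{f\in\mathscr{A}}\V_{p,k}(f)$ and (when $\mu$ is finite) $W_k=\sup_{f\in\mathscr{A}}\V_{p,k}(f,\Omega)$. By the reformulation of UA given in the introduction, $\mathscr{A}$ is UA if and only if $S_k\to 0$. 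Moreover, since $\mathscr{G}_{p,k}$ is proximinal (Theorem \ref{mainresult}), the condition defining $N_{p,\varepsilon}(\mathscr{A})$ --- that every $f\in\mathscr{A}$ admit some $h\in\mathscr{G}_{p,k}$ with $\|f-h\|_p\le\varepsilon$ --- holds exactly when $\mathscr{D}_{p,k}(f)\le\varepsilon$ for all $f$, i.e. when $S_k\le\varepsilon$. Hence $N_{p,\varepsilon}(\mathscr{A})=\min\{k:S_k\le\varepsilon\}$, while $r_\varepsilon(\mathscr{A})=\min\{k:V_k\le\varepsilon\}$ and $m_\varepsilon(\mathscr{A})=\min\{k:W_k\le\varepsilon\}$ hold by definition.

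Next I would take suprema in Proposition \ref{variation}(i), namely $\mathscr{D}_{p,k+1}(f)\le\V_{p,k}(f)\le 2\mathscr{D}_{p,k}(f)$, to obtain $S_{k+1}\le V_k\le 2S_k$. The equivalence (i)$\Leftrightarrow$(ii) is then immediate: if $S_k\to 0$ then $V_k\le 2S_k\to 0$, and if $V_k\to 0$ then $S_{k+1}\le V_k\to 0$. For the quantitative bounds, the lower estimate $r_{2\varepsilon}(\mathscr{A})\le N_{p,\varepsilon}(\mathscr{A})$ follows by setting $k=N_{p,\varepsilon}(\mathscr{A})$, so that $S_k\le\varepsilon$ and hence $V_k\le 2S_k\le 2\varepsilon$, forcing $r_{2\varepsilon}\le k$; the upper estimate $N_{p,\varepsilon}(\mathscr{A})\le r_\varepsilon(\mathscr{A})+1$ follows by setting $k=r_\varepsilon(\mathscr{A})$, so that $V_k\le\varepsilon$ and hence $S_{k+1}\le V_k\le\varepsilon$, forcing $N_{p,\varepsilon}\le k+1$.

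For the finite-measure case I would use the full chain in Proposition \ref{variation}(ii), $\mathscr{D}_{p,k}(f)\le\V_{p,k}(f,\Omega)\le\V_{p,k}(f)\le 2\mathscr{D}_{p,k}(f)\le 2\V_{p,k}(f,\Omega)$, which upon taking suprema gives $S_k\le W_k\le V_k\le 2S_k\le 2W_k$. In particular $W_k\le V_k\le 2S_k$ together with $S_k\le W_k$ shows $W_k\to 0\Leftrightarrow S_k\to 0$, proving (iii)$\Leftrightarrow$(i). The five-term chain is then assembled from these monotone comparisons: $W_k\le V_k$ gives $m_{2\varepsilon}\le r_{2\varepsilon}$ and $m_\varepsilon\le r_\varepsilon$; the already-established $r_{2\varepsilon}\le N_{p,\varepsilon}$ is reused; and $S_k\le W_k$ gives $N_{p,\varepsilon}\le m_\varepsilon$ by the same threshold argument as above (if $W_k\le\varepsilon$ then $S_k\le\varepsilon$).

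The routine part is the bookkeeping of suprema and thresholds. The only genuine subtlety --- and the point I would be most careful about --- is the identification $N_{p,\varepsilon}(\mathscr{A})=\min\{k:S_k\le\varepsilon\}$: the definition of $N_{p,\varepsilon}$ asks for an \emph{actual} approximant $h$ with $\|f-h\|_p\le\varepsilon$, whereas $S_k\le\varepsilon$ only controls the infimum $\mathscr{D}_{p,k}(f)$. The two coincide precisely because that infimum is attained, i.e. because $\mathscr{G}_{p,k}$ is proximinal; without Theorem \ref{mainresult} the boundary equality case could fail, so I would invoke proximinality explicitly at this step and keep it in mind throughout.
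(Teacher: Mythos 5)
Your proposal is correct and follows essentially the same route as the paper: both proofs consist of taking suprema over $\mathscr{A}$ in the pointwise inequalities of Proposition \ref{variation} and then translating the resulting chain $S_{k+1}\le V_k\le 2S_k$ (and, for finite $\mu$, $S_k\le W_k\le V_k\le 2S_k\le 2W_k$) into the stated threshold inequalities. Your explicit remark that the identification $N_{p,\varepsilon}(\mathscr{A})=\min\{k:\ \sup_{f}\mathscr{D}_{p,k}(f)\le\varepsilon\}$ uses proximinality to handle the boundary case is a point the paper leaves implicit, and it is correctly placed.
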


\begin{proof}
Suppose that $\mathscr{A}$ is UA and fix $\varepsilon>0$. Then we have that $\mathscr{D}_{p,k}(f)\leq\varepsilon$ 
for all $f\in\mathscr{A}$, where $k=N_{p,\varepsilon}(\mathscr{A})$. By Proposition \ref{variation}, we deduce that 
$\V_{p,k}(f)\leq 2\varepsilon$ for all $f\in\mathscr{A}$. It follows that 
$r_{2\varepsilon}(\mathscr{A})\leq N_{p,\varepsilon}(\mathscr{A})$, implying that $(ii)$ holds. Now suppose that $(ii)$ holds. 
Using Proposition \ref{variation} again, it is easy to see that $N_{p,\varepsilon}(\mathscr{A})\leq r_{\varepsilon}(\mathscr{A})+1$, 
from what we deduce that $(i)$ is true. In the case that $\mu$ is finite, the equivalence between $(ii)$ and $(iii)$ and 
the last assertion of the theorem follow directly from Proposition \ref{variation}.
\end{proof}

\subsection{The unit ball of $L^p$}
\label{UballisUA}

Now we investigate when the unit ball of $L^p(\Omega,\F,\mu)$, denoted by 
$B_{L^p(\Omega,\F,\mu)}=\{f\in L^p:\, \|f\|_p\le 1\}$, is UA. The case $p=\infty$ is simple:

\begin{proposition}\label{UA_infty}
Let $(\Omega,\F,\mu)$ be a measure space. Then $B_{L^\infty(\Omega,\F,\mu)}$ is UA. More precisely 
we have that $N_{\infty,\varepsilon}(B_{L^\infty(\Omega,\F,\mu)})\leq\left[\frac{2}{\varepsilon}\right]+1$ 
(where $[.]$ is the integer part) for all $\varepsilon>0$.
\end{proposition}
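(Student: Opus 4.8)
The plan is to read off the result as an immediate consequence of Theorem \ref{L_infty}, which gives the exact identity $N_{\infty,\varepsilon}(\mathscr A)=\sup_{f\in\mathscr A}\mathcal N(f,\varepsilon)$, together with the elementary covering-number estimate recorded just before that theorem. Concretely, I would fix $\varepsilon>0$ and take an arbitrary $f\in B_{L^\infty(\Omega,\F,\mu)}$; since $\|f\|_\infty\le 1$, the bound $\mathcal N(f,\varepsilon)\le \frac{2}{\varepsilon}\|f\|_\infty+1\le \frac{2}{\varepsilon}+1$ holds. Because $\mathcal N(f,\varepsilon)$ is a positive integer, this forces $\mathcal N(f,\varepsilon)\le\big[\frac{2}{\varepsilon}\big]+1$, uniformly in $f$ in the unit ball. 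Taking the supremum and applying Theorem \ref{L_infty} then yields $N_{\infty,\varepsilon}(B_{L^\infty(\Omega,\F,\mu)})\le\big[\frac{2}{\varepsilon}\big]+1<\infty$, and since $\varepsilon>0$ is arbitrary, $B_{L^\infty(\Omega,\F,\mu)}$ is UA.

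For a self-contained route that produces the constant directly, I would instead build the approximant by hand. Given $f$ with $\|f\|_\infty\le1$, pass to the representative $\tilde f=f\ind_{\{|f|\le1\}}$, which equals $f$ a.e.\ and satisfies $\tilde f(\Omega)\subset[-1,1]$. Set $N=\big[\frac{2}{\varepsilon}\big]+1$ and partition $[-1,1]$ into $N$ consecutive intervals $J_1,\dots,J_N$, each of length at most $2/N\le\varepsilon$, with midpoints $c_1,\dots,c_N$. Pulling these back, the sets $A_i=\tilde f^{-1}(J_i)$ form a measurable partition of $\Omega$ into at most $N$ pieces, so $g=\sum_{i=1}^N c_i\ind_{A_i}\in\mathscr G_{\infty,N}$. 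For $x\in A_i$ one has $|\tilde f(x)-c_i|\le\tfrac12\,\mathrm{length}(J_i)\le 1/N\le\varepsilon/2$, whence $\|f-g\|_\infty=\|\tilde f-g\|_\infty\le\varepsilon$. This shows directly that $N_{\infty,\varepsilon}(B_{L^\infty(\Omega,\F,\mu)})\le N$.

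There is no substantive obstacle here: the whole difficulty has already been absorbed into Theorem \ref{L_infty} (and its underlying covering-number machinery). The only points requiring a little care are choosing a genuine pointwise representative with range in $[-1,1]$, rather than merely an a.e.\ bound, and observing that the supremum of the integer-valued covering numbers collapses to the integer part $\big[\frac{2}{\varepsilon}\big]+1$ rather than the real number $\frac{2}{\varepsilon}+1$.
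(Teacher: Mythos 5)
Your first argument is exactly the paper's proof: the paper disposes of this proposition in one line by citing Theorem \ref{L_infty} together with the bound $\mathcal{N}(f,\varepsilon)\le \frac{2}{\varepsilon}\|f\|_\infty+1$ recorded just before it, and your observation that integrality of $\mathcal{N}(f,\varepsilon)$ collapses the bound to $\left[\frac{2}{\varepsilon}\right]+1$ is correct. Your second, self-contained construction (choosing the representative $\tilde f$ with range in $[-1,1]$ and projecting onto midpoints of an $\varepsilon$-fine partition of that interval) is also valid and is a reasonable elementary alternative that bypasses Theorem \ref{L_infty} entirely, but it is not needed.
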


\begin{proof}
It is a direct consequence of theorem \ref{L_infty}.
\end{proof}

The main objective of this section is to prove the following result:

\begin{theorem}\label{ballUA}
Let $(\Omega,\F,\mu)$ be a measure space and $p\in[1,+\infty)$. The following assertions are equivalent:
\begin{enumerate}
    \item[(i)] $B_{L^p(\Omega,\F,\mu)}$ is UA;
    \item[(ii)] $L^p(\Omega,\F,\mu)$ is finite dimensional;
    \item[(iii)] $\mu$ is atomic and has only a finite number of atoms with finite measure, up to measure $0$.
\end{enumerate}
More precisely, if the previous assertions are false then $N_{p,\varepsilon}(B_{L^p(\Omega,\F,\mu)})=\infty$ for all $\varepsilon\in(0,1)$.
\end{theorem}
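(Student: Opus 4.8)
The plan is to establish $(ii)\Leftrightarrow(iii)$ as the routine structural part, the easy implication $(iii)\Rightarrow(i)$, and to isolate $(i)\Rightarrow(iii)$ — equivalently its contrapositive $\neg(iii)\Rightarrow N_{p,\varepsilon}(B_{L^p(\Omega,\F,\mu)})=\infty$ for every $\varepsilon\in(0,1)$ — as the real content. For $(iii)\Rightarrow(ii)$ I would note that if $\mu$ is atomic with finite-measure atoms $A_1,\dots,A_d$ (up to measure $0$) then every $f\in L^p(\Omega,\F,\mu)$ is a.e.\ constant on each $A_i$ and vanishes off $\bigcup_i A_i$, so $L^p(\Omega,\F,\mu)=\mathrm{span}\{\ind_{A_1},\dots,\ind_{A_d}\}$ has dimension $d$; this simultaneously yields $(iii)\Rightarrow(i)$, since then $L^p(\Omega,\F,\mu)=\mathscr{G}_{p,d+1}$ and hence $N_{p,\varepsilon}\le d+1$ for all $\varepsilon$. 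The converse $(ii)\Rightarrow(iii)$ is the standard fact that a diffuse part of positive measure (split repeatedly into halves of positive measure) or infinitely many finite atoms produces infinitely many disjoint sets of finite positive measure, whose normalized indicators are linearly independent, forcing $\dim L^p(\Omega,\F,\mu)=\infty$.

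For $\neg(iii)$ I would first observe it splits into two cases: either $\mu$ has a diffuse part of positive measure (which covers $\mu$ non-atomic), or $\mu$ is atomic with infinitely many atoms of finite measure. In each case the goal is, for a fixed $k$ and a fixed target $\varepsilon<1$, to build a single $f$ with $\|f\|_p=1$ and $\mathscr{D}_{p,k}(f)>\varepsilon$. The key construction is multi-scale with equalized weights: choose a large ratio $\rho>1$ and put $f=\sum_{i=1}^n c_i\ind_{B_i}$ on disjoint sets $B_i$ of finite positive measure, with geometrically separated values $c_i=\rho^i$ and masses tuned so that each scale carries the same $L^p$-weight, namely $\mu(B_i)=\tfrac1n\rho^{-ip}$, which gives $\|f\|_p^p=\sum_i c_i^p\mu(B_i)=1$. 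The lower bound is then elementary and avoids the special-form reduction: for any $g\in\mathscr{G}_{p,k}$ with value set $V=g(\Omega)$, $|V|\le k$, one has $\|f-g\|_p^p\ge\sum_i\mu(B_i)\,\mathrm{dist}(c_i,V)^p=\tfrac1n\sum_i\big(\mathrm{dist}(c_i,V)/c_i\big)^p$. Calling $c_i$ \emph{captured} when some $v\in V$ lies in $(c_i(1-\theta),c_i(1+\theta))$ and choosing $\rho>(1+\theta)/(1-\theta)$ makes these intervals pairwise disjoint, so at most $k$ indices are captured; each of the remaining $\ge n-k$ indices contributes at least $\theta^p$, whence $\mathscr{D}_{p,k}(f)^p\ge\theta^p\,(n-k)/n$. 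Taking $\theta\in(\varepsilon,1)$, then $\rho>(1+\theta)/(1-\theta)$, then $n$ large, yields $\mathscr{D}_{p,k}(f)>\varepsilon$, so $N_{p,\varepsilon}=\infty$.

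In the diffuse case this is clean, since atomlessness lets me realize the $B_i$ with exactly the prescribed measures (rescaling all $c_i$ by a common factor if the total mass must be squeezed into a small-measure diffuse part, which leaves the weights and hence the bound unchanged). The main obstacle is the atomic case, where the masses $\mu(B_i)$ cannot be chosen freely: here I would group the given atoms into $J$ bands, assign a common value $c_j$ to all atoms in band $j$, and choose the bands so that $\sum_{A\subset\text{band }j}\mu(A)$ realizes (up to a controlled overshoot of one atom) the target mass making every band carry weight $\approx 1/J$; the same capture-interval estimate, now with $J$ separated scales in place of $n$, gives $\mathscr{D}_{p,k}(f)^p\gtrsim\theta^p(J-k)/J$. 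Balancing the band targets against the available atom sizes — distinguishing whether the atom measures have infimum $0$ (so one can emulate the diffuse construction arbitrarily well) or are bounded below by some $\delta>0$ (so the total mass is infinite and large bands are easy to fill, while the smallest band forces a relation such as $J\lesssim\rho^p/\delta$ that still permits $J\to\infty$ as $\rho\to\infty$) — is the technical heart of the argument and is where most of the care is required.
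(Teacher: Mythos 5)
Your core mechanism is the same as the paper's: build a function whose values sit at geometrically separated scales, each scale carrying comparable $L^p$-weight, and observe that the at most $k$ values of any $g\in\mathscr{G}_{p,k}$ can ``capture'' at most $k$ of those scales, so each remaining scale contributes a fixed fraction of the norm to $\|f-g\|_p^p$. This is precisely the ``unmarked index'' argument of Propositions \ref{The:1} and \ref{atomic_UA}, and your diffuse-case version with exactly equalized weights $c_i^p\mu(B_i)=1/n$ and the bound $\|f-g\|_p^p\ge\sum_i\mu(B_i)\,\mathrm{dist}(c_i,g(\Omega))^p$ is complete and, if anything, cleaner than the paper's. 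One genuine structural difference: by proving the contrapositive $\neg(iii)\Rightarrow\neg(i)$ you bypass the atomic/nonatomic decomposition theorem of \cite{decomposition} that the paper invokes for $(i)\Rightarrow(iii)$. That shortcut is legitimate, but you must supply the lemma that makes your case split work: if $\mu$ is not atomic, it admits an \emph{atomless set of finite positive measure} (a positive-measure set containing no atom must contain a subset of finite positive measure, since otherwise every subset of finite measure would be null and the set would itself be an atom); only then does the Sierpi\'nski splitting, and hence your realization of the prescribed masses $\mu(B_i)$, apply.

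The genuine gap is in the atomic case with atom masses bounded below by some $\delta>0$. You fill each band to a target mass ``up to a controlled overshoot of one atom,'' but the overshoot equals the mass of the last atom added, and atom masses need not be bounded above: a single atom of enormous mass forced into a band with a small target wrecks both the near-equality of the weights $c_j^p\,\mu(\text{band }j)$ and the upper bound on $\|f\|_p^p$, so the factor $(J-k)/J$ does not survive normalization. The paper avoids prescribing targets altogether: it groups the atoms into dyadic blocks with masses $S_i\ge a2^{i-1}\to\infty$ and keeps only those blocks whose masses happen to land in geometric windows $[R^{t_q},R^{t_q+1})$, assigning value $R^{-t_q}$, which forces all weights to agree within a factor $R$. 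Within your framework the repair is a further dichotomy: either infinitely many atoms have mass in some bounded interval $[\delta,T]$, in which case the overshoot is at most $T$ and is negligible against sufficiently large targets, or the masses tend to infinity along a subsequence, in which case single atoms serve as bands with values $\mu(A_i)^{-1/p}$ exactly as in your small-atoms case. As written, the band-filling step does not go through.
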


This theorem will be proved thanks to several intermediary results. We start with the following result:

\begin{proposition}\label{The:1}
Let $(\Omega,\F,\mu)$ be a measure space and $p\in[1,+\infty)$. Suppose that there exists a sequence of 
disjoint measurable sets $(A_n)_n$ of positive measure such that $\mu(A_n)\to 0$. 
Then $B_{L^p(\Omega,\F,\mu)}$ is not UA. More precisely, we have that 
$N_{p,\varepsilon}(B_{L^p(\Omega,\F,\mu)})=\infty$ for all $\varepsilon\in(0,1)$.
\end{proposition}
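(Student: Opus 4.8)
The plan is to exhibit, for every $k\ge 1$ and every $\varepsilon\in(0,1)$, a single function $f$ in the unit ball with $\mathscr{D}_{p,k}(f)>\varepsilon$; since $k$ is arbitrary this forces $N_{p,\varepsilon}(B_{L^p(\Omega,\F,\mu)})=\infty$, which is exactly the claim. The function $f$ will be a superposition of $m$ disjoint normalized ``spikes'' supported on a rapidly thinning subsequence of $(A_n)_n$, with heights spread out geometrically so that the $k$ available values of any approximant $g\in\mathscr{G}_{p,k}$ can match only a handful of the spikes.

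\emph{Construction.} First I would fix a separation parameter $\varepsilon'\in(0,1)$ with $1-\varepsilon'>\varepsilon$, set $\rho=(2-\varepsilon')/\varepsilon'$, and --- using $\mu(A_n)\to 0$ --- pass to a subsequence, still written $(A_j)_j$, with $0<\mu(A_j)<\infty$ and $\mu(A_{j+1})\le \rho^{-p}\mu(A_j)$. Then I would put equal weight $1/m$ on each of the first $m$ spikes by taking $h_j=(m\,\mu(A_j))^{-1/p}$ and defining $f=\sum_{j=1}^m h_j\ind_{A_j}$, so that $\|f\|_p^p=\sum_{j=1}^m h_j^p\mu(A_j)=1$ and hence $f\in B_{L^p(\Omega,\F,\mu)}$. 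The geometric decay of the measures makes the heights satisfy $h_{j+1}/h_j\ge\rho$.

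\emph{Lower bound.} Let $g=\sum_i c_i\ind_{B_i}\in\mathscr{G}_{p,k}$ and let $V$ be its (at most $k$) values. On $A_j$ the function $f$ equals the constant $h_j$ while $g$ takes values in $V$, so $\int_{A_j}|f-g|^p\,d\mu\ge d_j^p\,\mu(A_j)$ with $d_j=\operatorname{dist}(h_j,V)$, whence $\|f-g\|_p^p\ge \frac1m\sum_j (d_j/h_j)^p$. The crucial separation step is that the intervals $J_j=(\varepsilon' h_j,(2-\varepsilon')h_j)$ are pairwise disjoint (because $h_{j+1}\ge\rho h_j$), and $d_j/h_j<1-\varepsilon'$ holds if and only if some value of $g$ lies in $J_j$; as each of the $\le k$ values lies in at most one $J_j$, at most $k$ indices $j$ can be ``well approximated''. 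For the remaining $\ge m-k$ indices one has $(d_j/h_j)^p\ge(1-\varepsilon')^p$, so $\|f-g\|_p^p\ge(1-\varepsilon')^p(1-k/m)$. Choosing $m$ so large that $(1-\varepsilon')^p(1-k/m)>\varepsilon^p$ yields $\mathscr{D}_{p,k}(f)>\varepsilon$, as wanted.

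The main obstacle --- and the reason for spacing the heights geometrically rather than taking all spikes of equal height --- is to push the approximation error all the way up to $1$, so that the argument works for every $\varepsilon<1$ and not merely up to some universal constant like $1/2$. The point is that the scale-invariant quantity is the \emph{relative} distance $d_j/h_j$, not the absolute one, and the geometric spacing guarantees that a single approximating value can be within relative distance $1-\varepsilon'$ of at most one height. One should still check the routine edge cases (intervals sharing an endpoint, and that infinitely many $A_j$ of positive finite measure with the required decay can indeed be extracted from $(A_n)_n$), but these present no real difficulty.
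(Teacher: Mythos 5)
Your proof is correct and follows essentially the same strategy as the paper's: spikes on a subsequence of the $A_n$ with geometrically spaced heights, plus a counting argument showing that each of the $k$ values of an approximant can be within relative distance $1-\varepsilon'$ of at most one spike height, forcing a relative error arbitrarily close to $1$. Your version is marginally cleaner in that the windows $J_j$ are pairwise disjoint (so each value marks at most one index rather than two) and $f$ is normalized from the outset, but the underlying idea is identical to the paper's marked/unmarked index argument.
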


\begin{proof}
We are going to prove that for all $k\geq 1$
\begin{equation}
\label{eq:1}
\sup\limits_{f\in L^p(\Omega,\F,\mu), f\neq 0} \inf\limits_{h\in \mathscr{G}_{p,k}} \frac{\|f-h\|_p^p}{\|f\|_p^p}=1.
\end{equation}
Note that this equality implies easily that $N_{p,\varepsilon}(B_{L^p(\Omega,\F,\mu)})=\infty$ for all $\varepsilon\in(0,1)$.
Consider $r\geq 2$ and consider a subsequence $(n_k)_k$ such that $\mu(A_{n_k})\in (r^{-n_k-1},r^{-n_k}]$.
We further assume that $n_{k+1}-n_k\ge 2$.
With this sequence we consider
$$f_N(x)=\sum\limits_{j=1}^N \mu(A_{n_j})^{\frac{-1}{p}}\ind_{A_{n_j}}(x).$$ for all $N\geq 1$ and note that $\|f_N\|_p^p=N$. 
Let $h\in \mathscr{G}_{p,k}$ and $N>2+2k$. We say that an index $1<j< N$ is unmarked if 
$\text{Im}(h)\cap \left(r^{\frac{n_{j-1}}{p}},r^{\frac{n_{j+1}}{p}}\right)=\emptyset$. Note that there are at least $N-2-2k$ unmarked indexes. 
For such unmarked index $j$, we have for $x\in A_{n_j}$
$$
\begin{array}{l}
f_N(x)= \mu(A_{n_j})^{\frac{-1}{p}}\ge r^{\frac{n_j}{p}}>r^{\frac{n_j-1}{p}}> r^{\frac{n_{j-1}}{p}}\ge h(x), \hbox{ or}\\
\\
f_N(x)= \mu(A_{n_j})^{\frac{-1}{p}}< r^{\frac{n_j+1}{p}}< r^{\frac{n_{j+1}}{p}}\le h(x).
\end{array}
$$
In the first case we have 
$$f_N(x)-h(x)\ge f_N(x)-r^{\frac{n_j-1}{p}}\ge f_N(x)-r^{\frac{-1}{p}}f_N(x)=\left(1-r^{\frac{-1}{p}}\right)f_N(x).$$
In the second case we get
$$ h(x)-f_N(x)\ge r^{\frac{n_{j+1}}{p}}-f_N(x)\ge r^{\frac{n_j+2}{p}}-f_N(x)=r^{\frac{1}{p}} r^{\frac{n_j+1}{p}}-f_N(x)> \left( r^{\frac{1}{p}}-1\right)f_N(x).$$ Notice that
$$\theta=1-r^{\frac{-1}{p}}=\frac{r^{\frac{1}{p}}-1}{r^{\frac{1}{p}}}<r^{\frac{1}{p}}-1.$$
So, we have $|f_N(x)-h(x)|\ge \theta f_N(x)$ on $A_{n_j}$.
Then, we conclude that
$$\|f_N-h\|_p^p\ge \sum\limits_{j: \hbox{unmarked}} \int_{A_{n_j}} |f_N(x)-h(x)|^p \, dx
\ge \theta^p \sum\limits_{j: \hbox{unmarked}} \int_{A_{n_j}} |f_N(x)|^p \, dx\ge \theta^p(N-2-2k).$$
Hence, we have
$$\frac{\|f_N-h\|_p^p}{\|f_N\|_p^p}\ge \theta^p \frac{N-2-2K}{N},$$
and we get 
$$\sup\limits_{f\in L^p(\Omega,\F,\mu), f\neq 0} \inf\limits_{h\in \mathscr{G}_{p,k}} \frac{\|f-h\|_p^p}{\|f\|_p^p}\ge \theta^p=(1-r^{\frac{-1}{p}})^p.$$
Now, it is enough to make $r\uparrow \infty$.
\end{proof}

An inmediate corollary is obtained for diffuse measures.

\begin{corollary}
\label{UA_diffuse}
Assume that $\mu$ is a non trivial diffuse measure, 
then for all $p\in[1,\infty)$ the unit ball $B_{L^p(\Omega,\F,\mu)}$ 
is not UA and  $N_{p,\varepsilon}(B_{L^p(\Omega,\F,\mu)})=\infty$,
for all $\varepsilon\in(0,1)$.
\end{corollary}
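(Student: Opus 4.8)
The plan is to reduce everything to Proposition \ref{The:1}: that result already shows that the existence of a sequence of pairwise disjoint measurable sets of positive measure with measures tending to $0$ forces $B_{L^p(\Omega,\F,\mu)}$ to fail UA, with $N_{p,\varepsilon}(B_{L^p(\Omega,\F,\mu)})=\infty$ for every $\varepsilon\in(0,1)$. Thus the only task is to manufacture such a sequence from the hypothesis that $\mu$ is a non trivial diffuse measure. Once a sequence $(A_n)_n$ with $\mu(A_n)>0$ and $\mu(A_n)\to 0$ is produced, the corollary follows immediately by quoting Proposition \ref{The:1}.

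First I would produce a single measurable set of finite positive measure. Since $\mu$ is non trivial there is $E\in\F$ with $\mu(E)>0$. If $\mu(E)<\infty$ this $E$ already works; if $\mu(E)=\infty$, then because $\mu$ has no atoms $E$ is not an atom, so there exists $B\subseteq E$ with $\mu(B)\neq 0$ and $\mu(B)<\mu(E)=\infty$, and the strict inequality $\mu(B)<\infty$ forces $B$ to have finite positive measure. Denote the resulting set by $A_0$ and set $m=\mu(A_0)\in(0,\infty)$. This first step is the single point that requires genuine care, since one must apply the definition of atom correctly to a set of infinite measure before any splitting argument can begin.

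Next I would record an elementary splitting lemma: in a diffuse space, every set $S$ of finite positive measure contains a measurable subset of positive measure at most $\mu(S)/2$. Indeed, since $S$ is not an atom there is $B\subseteq S$ with $0<\mu(B)<\mu(S)$; then $S\setminus B$ also has positive (finite) measure, and whichever of $B$, $S\setminus B$ is smaller has measure $\le\mu(S)/2$. Iterating this $n$ times yields subsets of $S$ of positive measure $\le\mu(S)/2^n$, hence subsets of arbitrarily small positive measure.

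Finally I would peel off small disjoint pieces by recursion. Put $F_0=A_0$; assuming $F_{n-1}\subseteq A_0$ with $\mu(F_{n-1})>0$, use the lemma to choose $A_n\subseteq F_{n-1}$ with $0<\mu(A_n)\le\min\{2^{-n},\ \mu(F_{n-1})/2\}$, and set $F_n=F_{n-1}\setminus A_n$. Then $\mu(F_n)=\mu(F_{n-1})-\mu(A_n)\ge\mu(F_{n-1})/2>0$, so the recursion never terminates and $\mu(F_n)\ge m\,2^{-n}>0$ for all $n$. By construction the sets $(A_n)_n$ are pairwise disjoint, since each $A_n$ lies in $F_{n-1}$, which is disjoint from every previously removed piece, and $\mu(A_n)\le 2^{-n}\to 0$. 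Feeding this sequence into Proposition \ref{The:1} gives that $B_{L^p(\Omega,\F,\mu)}$ is not UA and that $N_{p,\varepsilon}(B_{L^p(\Omega,\F,\mu)})=\infty$ for all $\varepsilon\in(0,1)$, which is the assertion.
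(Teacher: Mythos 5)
Your proposal is correct, and its overall architecture is exactly that of the paper: reduce the corollary to Proposition \ref{The:1} by producing a sequence of pairwise disjoint sets of positive measure tending to $0$. The one genuine difference is the tool used to produce that sequence. The paper invokes Sierpi\'nski's theorem \cite{Sierpinski} (the intermediate-value property of nonatomic measures) to cut a set $B_0$ of finite positive measure into pieces of measure exactly $a/2^k$, whereas you replace this by an elementary halving lemma derived directly from the definition of atom: a non-atom $S$ of finite positive measure contains a subset of positive measure at most $\mu(S)/2$, and iterating gives subsets of arbitrarily small positive measure, from which the disjoint sequence is peeled off recursively. Your route is more self-contained (no external theorem needed, only the paper's own definition of atom) at the cost of a slightly longer bookkeeping argument; you are also more careful than the paper about the preliminary step of locating a set of finite positive measure in the first place, which the paper dispatches with a parenthetical remark. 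Both arguments are valid and land on the same application of Proposition \ref{The:1}.
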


\begin{proof} This follows directly from Sierpiński's theorem (see \cite{Sierpinski}). In fact, consider a measurable 
set $B_0$ such that $0<\mu(B_0)=a<\infty$ (if such set
does not exists then $\Omega$ is an atom of $\mu$). Then, there
exists $B_1\subset B_0$ such that $\mu(B_1)=\frac{a}{2}$. Applying
the same idea to $B_0\setminus B_1$, there exists 
$B_2\subset (B_0\setminus B_1)$ such that 
$\mu(B_2)=\frac{\mu( B_0\setminus B_1)}{2}=\frac{a}{4}$. 
Inductively, we construct a sequence of disjoint subsets 
$(B_k)_k$ such that 
$$
B_{k+1}\subset B_0\setminus \bigcup\limits_{i=1}^k B_i,
$$
and 
$$
\mu(B_{k+1})=\frac{\mu\left( B_0\setminus \bigcup\limits_{i=1}^k
B_i\right)}{2}=\frac{a}{2^{k+1}}
$$ 
for all $k\in\mathbb N$. The result follows from the previous Proposition.
\end{proof}

\begin{proposition}
\label{atomic_UA}
Assume that $(\Omega,\F,\mu)$ is an atomic measure space and $p\in [1,\infty)$. 
Then the following are equivalent:
\begin{itemize}
    \item[(i)] $\mu$ has a finite number of atoms of finite measure, up to measure $0$;  
    \item[(ii)] The space  $L^p(\Omega,\F,\mu)$ is finite dimensional; 
    \item[(iii)] The unit ball $B_{L^p(\Omega,\F,\mu)}$ is UA.
\end{itemize}

Moreover, if the previous assertions are false then
$N_{p,\varepsilon}(B_{L^p(\Omega,\F,\mu)})=\infty$ for 
all $\varepsilon\in(0,1)$.
\end{proposition}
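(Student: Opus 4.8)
The plan is to prove the cycle $(i)\Rightarrow(ii)\Rightarrow(iii)\Rightarrow(i)$, and to read off the quantitative conclusion $N_{p,\varepsilon}(B_{L^p(\Omega,\F,\mu)})=\infty$ directly from the contrapositive implication $\neg(i)\Rightarrow\neg(iii)$, since all three statements are then equivalent and the construction used there produces the bound.

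For $(i)\Rightarrow(ii)$ I would fix representatives $A_1,\dots,A_m$ of the finitely many atoms of finite measure and set $G=\Omega\setminus\bigcup_{i=1}^m A_i$. On any atom a measurable function is a.e. constant, and a nonzero constant on a set of infinite measure is not $p$-integrable; moreover, since the space is atomic, any positive-measure subset of $G$ contains an atom, which must have infinite measure (a finite-measure atom would coincide with some $A_i$ up to a null set, contradicting disjointness from $G$). Hence every $f\in L^p(\Omega,\F,\mu)$ vanishes a.e. on $G$, so $f=\sum_{i=1}^m c_i(f)\ind_{A_i}$ a.e., where $c_i(f)$ is the value of $f$ on $A_i$. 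The map $f\mapsto(c_1(f),\dots,c_m(f))$ is then a linear isomorphism onto $\RR^m$ (surjective because $\mu(A_i)<\infty$), so $L^p(\Omega,\F,\mu)$ is finite-dimensional. For $(ii)\Rightarrow(iii)$ I would use that in a finite-dimensional normed space the closed unit ball is compact, hence totally bounded, and invoke Proposition \ref{totally bounded} to conclude it is UA.

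The substance is $(iii)\Rightarrow(i)$, which I would prove as $\neg(i)\Rightarrow\neg(iii)$, establishing $\sup_{f\neq 0}\inf_{h\in\mathscr{G}_{p,k}}\|f-h\|_p^p/\|f\|_p^p=1$ for every $k$, exactly in the spirit of Proposition \ref{The:1}. Choose pairwise disjoint atoms $(A_n)_n$ of finite positive measure and set $m_n=\mu(A_n)$; the conceptual key is that any $h\in\mathscr{G}_{p,k}$ is constant on each atom, so approximating $f=\sum_n b_n\ind_{A_n}$ reduces to quantizing the values $(b_n)$ by at most $k$ levels chosen from one common pool. If $\inf_n m_n=0$, I would pass to a subsequence with $m_{n_j}\to 0$ and apply Proposition \ref{The:1} directly. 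If $\sup_n m_n=\infty$, I would extract a subsequence with $m_{n_{j+1}}\ge\rho^p m_{n_j}$ and use the equal-mass functions $\sum_j (Nm_{n_j})^{-1/p}\ind_{A_{n_j}}$, whose values are geometrically separated, so the marked/unmarked counting of Proposition \ref{The:1} again applies.

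The main obstacle is the remaining case, where $c\le m_n\le C$ lie in a compact interval: here neither the measures nor the equal-mass values can be separated in scale, so the separation must be manufactured by grouping. I would fix a large ratio $\rho$ and a large integer $L$, take geometrically separated values $v_i=\rho^{-i}$ for $i$ in a window $\{i_0+1,\dots,i_0+L\}$ chosen so that $\rho^{ip}/L\ge c$, and partition atoms into blocks $G_i$ whose total measure $M_i=\sum_{n\in G_i}m_n$ meets the target $\rho^{ip}/L$ with overshoot at most $C$ (feasible since each $m_n\le C$ and infinitely many atoms are available). Then $f=\sum_i v_i\ind_{\bigcup_{n\in G_i}A_n}$ has total mass $\sum_i v_i^p M_i\in[1,\,1+C\sum_i\rho^{-ip}]$, arbitrarily close to $1$. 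The counting finishes the argument: a level $v_i$ is helped by a value $a$ of $h$ only if $a\in((1-\gamma)v_i,(1+\gamma)v_i)$, and once $\rho$ exceeds $\tfrac{1+\gamma}{1-\gamma}$ each of the $k$ values helps at most one level, so at least $L-k$ levels are essentially unapproximated, each contributing at least $\gamma^p$ times its mass $\approx 1/L$. This gives a relative error at least $\gamma^p(L-k)/L\to\gamma^p$ as $L\to\infty$, with $\gamma^p\to 1$ as $\rho\to\infty$, so the supremum is $1$; hence $N_{p,\varepsilon}(B_{L^p(\Omega,\F,\mu)})=\infty$ for all $\varepsilon\in(0,1)$, closing the cycle and yielding the moreover clause.
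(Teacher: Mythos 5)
Your proof is correct and takes essentially the same route as the paper: the same easy implications $(i)\Rightarrow(ii)$ (via the complement of the finite-measure atoms carrying no integrable mass) and $(ii)\Rightarrow(iii)$ (compactness of the ball plus Proposition \ref{totally bounded}), and for $\neg(i)\Rightarrow\neg(iii)$ the same reduction to Proposition \ref{The:1} when the atom masses accumulate at $0$, together with a marked/unmarked counting over geometrically separated levels when they are bounded below. The only difference is bookkeeping: the paper builds its blocks from dyadic partial sums landing in geometric windows $[R^{t_q},R^{t_q+1})$, which handles bounded and unbounded atom masses in one stroke, whereas you split off the case $\sup_n\mu(A_n)=\infty$ and otherwise fill blocks greedily to prescribed target masses; both give the same lower bound tending to $1$ and hence $N_{p,\varepsilon}(B_{L^p(\Omega,\F,\mu)})=\infty$ for all $\varepsilon\in(0,1)$.
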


\begin{proof} Assume $(i)$ holds. Denote by $\{A_k\}_{1\leq k\leq n}$ a finite collection
of atoms of finite measure, such that all other atom $C$ of finite measure coincides
with some of them up to measure $0$. Take $B=\Omega\setminus \cup_{k=1}^n A_k$. If $\mu(B)>0$
there there exists an atom $C\subset B$. This atom $C$
satisfies that $\mu(C\setminus A_k)=\mu(C)>0$ and it cannot
coincide with $A_k$ up to measure $0$. Then $C$ has infinite
measure. Then either $\mu(B)=0$ or $\mu(B)=\infty$ and
contains no measurable subset of positive finite measure.
Then, $L^p(\Omega,\F,\mu)$ is generated
by the finite collection $\{\ind_{A_k}\}_{1\leq k\leq n}$, 
so $(ii)$ holds.
Clearly  $(ii) \Rightarrow (iii)$. 

So, for the rest of the proof we assume that there exists a countable collection of disjoint atoms 
$(A_n)_n$ each one of finite positive measure. 
Here there are two different situations. The first one 
is the existence of an infinite subsequence of atoms
$(A_{n_k})_k$ such that $\mu(A_{n_k})\to 0$. Then, we 
can apply Theorem \ref{The:1}, to conclude that the 
unit ball $B_{L^p(\Omega,\F,\mu)}$ is not UA.

The second possibility is the existence of a constant $a>0$ such that 
$\mu(A_n)\ge a$, for all $n$. We now procede to prove that 
$B_{L^p(\Omega,\F,\mu)}$ is not UA. We do it for $p=1$, the other cases are 
treated similary.

In what follows we fix $k\ge 2$ and $R>1$, and we 
consider the partial sums 
$$
S_i=\sum\limits_{2^{i-1}\le j<2^i} \mu(A_j)\ge a 2^{i-1},
$$
for $i\ge 1$, and we construct a strictly increasing 
sequence of integers $(t_q)_q$ such that the interval 
$[R^{t_q},R^{t_q + 1})$ contains at least one of these 
partial sums. We call
$S_{i_q}$ any such partial sums, for example the smallest
one, that is, for $q$ such that 
$[R^{t_q},R^{t_q + 1})\cap \{S_i\}_{i\geq 1}\neq \emptyset$, we take
$$
i_q=\min\{r\in \NN: R^{t_q}\le S_{r}<R^{t_q+1} \}.
$$
We also define 
$$
B_q=\bigcup\limits_{j=2^{i_q-1}}^{2^{i_q}-1} A_j,
$$
the union of the atoms that has mass $S_{i_q}$. We consider the function
$$
f=\sum\limits_{q=3}^{M+2} R^{-t_q}\, \ind_{B_q},
$$
where $M$ is a large integer. For the moment we choose 
$M>2kR$. Take $h\in \mathscr{G}_{1,k}$ and as before we say 
that $3\le q\le M+2$ is an unmarked index if 
$$
\text{Im}(h) \cap (R^{-t_q-1},R^{-t_q+1})=\emptyset.
$$
There are at least $M-2k$ unmarked indexes. For an 
unmarked index $q$ and $x\in B_q$, we either have 
$$
\begin{array}{l}
f(x)-h(x)\ge R^{-t_q}-R^{-t_q-1}\ge f(x)\left(1-\frac{1}{R}\right)=f(x) 
\frac{R-1}{R}, \hbox{ or}\\
\\
h(x)-f(x)\ge R^{-t_q+1}-R^{-t_q}\ge f(x)(R-1).
\end{array}
$$
In any case, we have for $x\in B_q$
$$
|f(x)-h(x)|\ge f(x) \frac{R-1}{R},
$$
and then
$$
\begin{array}{ll}
\|f-h\|_1& \ge \frac{R-1}{R}
\sum\limits_{q: \small{\hbox{ unmarked }}} 
R^{-t_q} \mu(B_q)
= \frac{R-1}{R} \left( \sum\limits_{q} R^{-t_q} \mu(B_q)
-\sum\limits_{q: \small{\hbox{marked }}} 
R^{-t_q} \mu(B_q)\right)\\
\\
&\ge \frac{R-1}{R}(\|f\|_1-2kR)=\|f\|_1\frac{R-1}{R}
\left(1-\frac{2kR}{\|f\|_1}\right)
\end{array}
$$
Now, we estimate the norm of $f$.
Clearly, we have $\|f\|_1=\sum_q R^{-t_q} \mu(B_q)$, 
which gives the lower estimate
\begin{equation}
\label{eq:1a}
M\le \|f\|_1,
\end{equation}
and then the lower bound
\begin{equation}
\label{eq:2}
\|f-h\|_1\ge \|f\|_1 
\frac{R-1}{R}\left(1-\frac{2kR}{M}\right).
\end{equation}
So, we conclude that for $\tilde f=f/\|f\|$
$$
\inf\{\|\tilde f-g\|_1:\, g\in \mathscr{G}_{1,k}\}\ge 
\frac{R-1}{R}\left(1-\frac{2kR}{M}\right),
$$
and therefore
$$
\sup\limits_{f\in B_{L^1(\Omega,\F,\mu)}}
\inf\{\|f-g\|_1:\, g\in \mathscr{G}_{1,k}\}\ge 
\frac{R-1}{R}\left(1-\frac{2kR}{M}\right)
$$
Taking $M\uparrow \infty$, we conclude that
$$
\sup\limits_{f\in B_{L^1(\Omega,\F,\mu)}}
\inf\{\|f-g\|_1:\, g\in \mathscr{G}_{1,k}\}\ge 
\frac{R-1}{R}.
$$
Now we take $R\uparrow \infty$, to get finally that
$$
\sup\limits_{f\in B_{L^1(\Omega,\F,\mu)}}
\inf\{\|f-g\|_1:\, g\in \mathscr{G}_{1,k}\}\ge 1
$$
independently of $k\ge 2$. For $k=1$, we point out that $\mathscr{G}_{1,1}=\{0\}$ and so
$$
\sup\limits_{f\in B_{L^1(\Omega,\F,\mu)}}
\inf\{\|f-g\|_1:\, g\in \mathscr{G}_{1,1}\}=1.
$$
Hence, $N_{1,\varepsilon}(B_{L^1(\Omega,\F,\mu)})=\infty$, for all $\varepsilon<1$.
\end{proof}
\medskip

In order to prove Theorem \ref{ballUA}, we shall use a result in \cite{decomposition}, where the notion of atomic and nonatomic are different from the (standard)
notions we are using. In this discussion we add an $*$ to distinguish the notions we are using and the corresponding in \cite{decomposition}. 
According to \cite{decomposition} a measurable set $A$ is an \textit{$*$-atom} if $\mu(A)>0$ and for all $E\in \F$ either $\mu(A\cap E)=0$ or
$\mu(A\setminus E)=0$. It is direct to show that if $A$ is an $*$-atom for $\mu$, then it is an atom for $\mu$. Indeed, assume that 
$B\subset A$ satisfies $\mu(B)<\mu(A)$, then $\mu(A\setminus B)=\mu(A)-\mu(B)>0$ and we conclude that $0=\mu(A\cap B)=\mu(B)$, 
proving that $A$ is an atom for $\mu$.
The converse is not always true (see the example below). It is true if $A$ has finite measure. In fact, suppose that $A$ is 
an atom of finite measure and let $E$ be a measurable set. If $\mu(A\cap E)>0$ then $\mu(A)=\mu(A\cap E)$, showing that 
$\mu(A\setminus E)=0$ since $E\cap A$ has finite measure, and therefore $A$ is an $*$-atom.

A measure is $*$-atomic if every measurable set $A$ of positive measure contains an $*$-atom. A measure that has no $*$-atoms
is said $*$-nonatomic. Here is an example of a $*$-nonatomic measure which is atomic in the standard sense. Consider $(\RR,\mathcal{P}(\RR))$ as a measurable space
and 
$$
\mu(A)=\begin{cases} \infty &\hbox{if $A$ is uncountable}\\
                     0      &\hbox{otherwise}
\end{cases}
$$
If $\mu(A)>0$, then $A$ is uncountable and can be splitted into two uncountable disjoint sets $B$ and $C$. Then $\mu(A\cap B)=\infty$ and $\mu(A\setminus B)=\infty$.
So, there are no $*$-atoms and then according to the above definition $\mu$ is $*$-nonatomic. 

The other concept we need is the notion of $*$-singular. Two measures $\nu$ and $\lambda$ are said $*$-singular if for all measurable sets $E$, there exist
two measurable sets $F$ and $G$ contained in $E$ such that
$$
\nu(F)=\nu(E),\ \lambda(F)=0, \hbox{ and } \lambda(G)=\lambda(E), \ \nu(G)=0.
$$
The main theorem we need is the following.\\

\noindent {\bf Theorem 2.1} in \cite{decomposition}.  {\it Assume $(\Omega,\F,\mu)$ is a measure space. Then $\mu$ can be decomposed as
$\mu=\nu+\lambda$, where $\nu$ is $*$-atomic and $\lambda$ is $*$-nonatomic. We can assume that $\nu,\lambda$ are $*$-singular, 
in which case the decomposition is unique.}\\

We are now ready to prove the main result of this subsection:

\begin{proof}[Proof of Theorem \ref{ballUA}] It is clear that 
$(iii)\implies (ii)\implies (i)$. Now suppose that $(i)$ holds, that is, $B_{L^p(\Omega,\F,\mu)}$ is UA.  
By Theorem 2.1 in \cite{decomposition}, 
there is a unique decomposition $\mu=\nu+\lambda$ where $\nu$ is $*$-atomic measure, $\lambda$ is $*$-nonatomic  
and $\nu$ and $\lambda$ are $*$-singular.
Consider 
$$
\mathcal{C}=\{ [A]:\ A \hbox{ is an $*$-atom for $\nu$ of finite $\nu$-measure }\}
$$ 
where $[A]$ is the equivalence class of measurable sets $B$ such that $\nu(A\Delta B)=0$. Notice that $[A]\in \mathcal{C}$ if  and only if $A$ is an atom of 
finite $\nu$-measure.
Therefore, if $[A]\neq [B] \in \mathcal{C}$ then $\nu(A\cap B)=0$, that is, $A$ and $B$ are disjoint up to $\nu$-measure $0$. 

If $\mathcal{C}$ is infinite, we take a countable collection  $(E_n)_n$ of atoms for $\nu$, which are disjoint up to $\nu$-measure zero, 
and each one has finite and positive $\nu$-measure. For every $n$ there exists
$F_n\subset E_n$, such that $\nu(F_n)=\nu(E_n)$ and $\lambda(F_n)=0$. Clearly, $(F_n)_n$ is a countable class of 
disjoint atoms for $\nu$, which have positive and finite measure. The measurable set $A=\cup_{n=1}^\infty F_n$ satisfies $\lambda(A)=0$. 
This shows that $\mu|_A=\nu|_A$, so $L^p(A,\F|_A,\mu|_A)$ and $L^p(A,\F|_A,\nu|_A)$ can be identified.

On the other hand, the measure $\nu|_A$ is atomic. Indeed, assume that $D\subset A$ has positive measure. Then for some $n$ it holds $\nu(D\cap F_n)>0$ and then
$D\cap F_n$ contains an $*$-atom $H$ of $\nu$, which has finite measure, and therefore it is an atom for $\nu$. We can apply 
Proposition \ref{atomic_UA} to conclude that $B_{L^p(A,\F|_A,\nu|_A)}$ is not UA, and a fortiori $B_{L^p(\Omega,\F,\mu)}$ is not UA, which is a contradiction. 

The conclusion is that $\nu$ has a finite number
of atoms $(A_n)_{n\in J}$ of finite measure, up to measure $0$, where $J$ is a finite (eventually empty) set. 
Therefore, if $B=\Omega\setminus \cup_{n\in J} A_n$, then any measurable $C\subset B$ has $0$ or infinite $\nu$-measure.

On the other hand, there exists $G\subset B$ such that $\nu(G)=0$ and $\lambda(G)=\lambda(B)$. If there exists $H\subset B$ a measurable set
such that $0<\lambda(H)<\infty$, then we arrive to a contradiction. Indeed, consider $K\subset H$ such that $\nu(K)=0$ and $\lambda(K)=\lambda(H)$. Since
$\lambda(H)$ is finite, this means that $\lambda(H\setminus K)=0$. Now, $\lambda|_K$ is a diffuse measure, because if there exists $L\subset K$
an atom for $\lambda$, then this atom has finite measure and therefore it is an $*$-atom for $\lambda$, which is not possible. 
The contradiction is obtained because $B_{L^p(K,\F|_K,\mu|_K)}$ and $B_{L^p(K,\F|_K,\lambda|_K)}$ 
can be identified and the latter is not UA, according to Corollary \ref{UA_diffuse}.

The conclusion is that $\lambda(H)$ is $0$ or infinite for every $H\subset B$. Since $\lambda(B^c)=0$, we conclude that $\lambda(H)$ is
either $0$ or infinite for every measurable set $H$. Also, $\mu(H)$ is $0$ or infinite, for any $H\subset B$
and $\mu=\nu$ on $A=B^c$. Therefore, $\mu$ is an atomic measure and it has a finite collection of disjoint atoms with finite measure, up to measure zero.

The last part of the Theorem follows from either Corollary \ref{UA_diffuse} or Proposition \ref{atomic_UA}.
\end{proof}

\subsection{Stability of the class of  UA sets}

In this subsection, we study the image of a UA set under classical operations. We start with the following easy proposition:

\begin{proposition}
Let $(\Omega,\F,\mu)$ be a measure space and $p\in[1,+\infty]$. Let $\mathscr{A},\mathscr{B}\subset L^p(\Omega,\F,\mu)$ and $\varepsilon>0$. Then:
\begin{enumerate}
    \item[(i)] if $\mathscr{A}\subset\mathscr{B}$ then $N_{p,\varepsilon}(\mathscr{A})\leq N_{p,\varepsilon}(\mathscr{B})$;
    \item[(ii)] $N_{p,\varepsilon}(\mathscr{A})=N_{p,\varepsilon}(\overline{\mathscr{A}})$;
    \item[(iii)] $N_{p,|\lambda|\varepsilon}(\lambda \mathscr{A})=N_{p,\varepsilon}(\mathscr{A})$ for all $\lambda\in\mathbb R$;
    \item[(iv)] $N_{p,\varepsilon}(\mathscr{A}+\mathscr{B})\leq\min_{t,s>0,t+s\leq \varepsilon}N_{p,t}(\mathscr{A})N_{p,s}(\mathscr{B})$;
\end{enumerate}
In particular if $\mathscr{A}$ and $\mathscr{B}$ are UA then $\overline{\mathscr{A}}$, $\lambda \mathscr{A}$ and $\mathscr{A}+\mathscr{B}$ are UA.
\end{proposition}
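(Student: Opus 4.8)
The plan is to verify the four estimates directly from the definition of $N_{p,\varepsilon}$ and then read off the three UA-stability conclusions as immediate corollaries. Throughout, the decisive feature of the sets $\mathscr{G}_{p,k}$ is that they are homogeneous, closed under taking common refinements of the underlying partitions, and proximinal.

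For $(i)$ I would simply observe that any integer $k$ witnessing the defining condition for $\mathscr{B}$ (every $f\in\mathscr{B}$ lies within $\varepsilon$ of $\mathscr{G}_{p,k}$) a fortiori witnesses it for the smaller set $\mathscr{A}$; hence the infimum defining $N_{p,\varepsilon}(\mathscr{A})$ is taken over a larger set of admissible $k$, giving $N_{p,\varepsilon}(\mathscr{A})\le N_{p,\varepsilon}(\mathscr{B})$. For $(iii)$, the case $\lambda\neq 0$ being the only nontrivial one since $\lambda=0$ gives the singleton $\{0\}$, I would use that $\mathscr{G}_{p,k}$ is homogeneous, $\lambda\mathscr{G}_{p,k}=\mathscr{G}_{p,k}$, because multiplying the coefficients of a simple function by $\lambda$ preserves both the partition and the bound on the number of values. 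Since $f\mapsto\lambda f$ is a bijection of $\mathscr{A}$ onto $\lambda\mathscr{A}$ and $\|\lambda f-\lambda h\|_p=|\lambda|\,\|f-h\|_p$, the condition ``$\exists h\in\mathscr{G}_{p,k}$, $\|f-h\|_p\le\varepsilon$'' is equivalent to ``$\exists h'\in\mathscr{G}_{p,k}$, $\|\lambda f-h'\|_p\le|\lambda|\varepsilon$'', so the two infima coincide.

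The step I expect to require the earlier theory is $(ii)$. One inequality, $N_{p,\varepsilon}(\mathscr{A})\le N_{p,\varepsilon}(\overline{\mathscr{A}})$, is $(i)$. For the reverse, I would set $k=N_{p,\varepsilon}(\mathscr{A})$, which I may assume finite since otherwise equality follows from $(i)$; as the infimum of a set of positive integers is attained, every $f\in\mathscr{A}$ satisfies $\mathscr{D}_{p,k}(f)\le\varepsilon$. Given $f\in\overline{\mathscr{A}}$, I pick $f_n\in\mathscr{A}$ with $f_n\to f$ and estimate $\mathscr{D}_{p,k}(f)\le\|f-f_n\|_p+\mathscr{D}_{p,k}(f_n)\le\|f-f_n\|_p+\varepsilon\to\varepsilon$, so $\mathscr{D}_{p,k}(f)\le\varepsilon$. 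Here proximinality (Theorem \ref{mainresult}) enters: since $\mathscr{G}_{p,k}$ is proximinal this distance is achieved by some $h\in\mathscr{G}_{p,k}$, giving $\|f-h\|_p\le\varepsilon$ for every $f\in\overline{\mathscr{A}}$, whence $N_{p,\varepsilon}(\overline{\mathscr{A}})\le k$.

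The main combinatorial point is $(iv)$. I would fix $t,s>0$ with $t+s\le\varepsilon$ and put $k=N_{p,t}(\mathscr{A})$, $l=N_{p,s}(\mathscr{B})$. For $f\in\mathscr{A}$ choose $g=\sum_{i}a_i\ind_{A_i}\in\mathscr{G}_{p,k}$ with $\|f-g\|_p\le t$, and for $f'\in\mathscr{B}$ choose $g'=\sum_{j}b_j\ind_{B_j}\in\mathscr{G}_{p,l}$ with $\|f'-g'\|_p\le s$. Then $g+g'=\sum_{i,j}(a_i+b_j)\ind_{A_i\cap B_j}$ is constant on the common refinement $\{A_i\cap B_j\}$, a measurable partition with at most $kl$ nonempty cells; since $g+g'\in L^p$, it lies in $\mathscr{G}_{p,kl}$ (for $p<\infty$ the coefficients on any infinite-measure cell vanish automatically, and for $p=\infty$ there is nothing to check). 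The triangle inequality gives $\|(f+f')-(g+g')\|_p\le t+s\le\varepsilon$, so $N_{p,\varepsilon}(\mathscr{A}+\mathscr{B})\le kl$; minimizing over admissible $t,s$ yields the stated bound. Finally the three UA assertions drop out: $(ii)$ makes $\overline{\mathscr{A}}$ UA, $(iii)$ makes $\lambda\mathscr{A}$ UA with the case $\lambda=0$ trivial, and $(iv)$ with $t=s=\varepsilon/2$ gives $N_{p,\varepsilon}(\mathscr{A}+\mathscr{B})\le N_{p,\varepsilon/2}(\mathscr{A})\,N_{p,\varepsilon/2}(\mathscr{B})<\infty$, so $\mathscr{A}+\mathscr{B}$ is UA.
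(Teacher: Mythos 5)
Your proof is correct and complete; the paper itself leaves this proposition to the reader, so there is no written argument to compare against, but yours is the natural one the authors intend. The one genuinely non-routine point is part (ii), where the non-strict inequality $\le\varepsilon$ in the definition of $N_{p,\varepsilon}$ means a limiting argument alone only yields $\mathscr{D}_{p,k}(f)\le\varepsilon$ for $f\in\overline{\mathscr{A}}$, and you correctly invoke proximinality (Theorem \ref{mainresult}) to convert that into an actual $h\in\mathscr{G}_{p,k}$ with $\|f-h\|_p\le\varepsilon$; likewise your observation in (iv) that the common refinement $\{A_i\cap B_j\}$ has at most $kl$ cells and that $g+g'\in L^p$ suffices for membership in $\mathscr{G}_{p,kl}$ is exactly what is needed.
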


\begin{proof}
The proof is left to the reader.
\end{proof}

In the next result, we prove that the closed convex hull of a bounded UA set is still UA.

\begin{theorem}\label{convex_hull}
Let $(\Omega,\F,\mu)$ be a measure space and $p\in(1,+\infty)$. If $\mathscr{A}\subset L^p(\Omega,\F,\mu)$ is a UA set, 
then $\mathscr{A}_K=\{f\in\overline{\text{co}}(\mathscr{A})\ |\ \forall g\in\mathscr{A}\ \|f-g\|_p\leq K\}$ is also UA 
for all $K\geq 0$. More precisely, we have that 
$$
N_{p,\varepsilon}(\mathscr{A}_K)\leq \min_{\eta\in(0,1)}\left(N_{p,(1-\eta)\varepsilon}(\mathscr{A})\right)^{s(\eta)}
$$ 
for all $\varepsilon>0$, where $s(\eta)=\left[\frac{CK}{\eta\varepsilon}\right]^{\frac{\min\{p,2\}}{\min\{p,2\}-1}}+1$ 
and $C$ is a constant depending on $(\Omega,\F,\mu)$ and $p$. In particular, if $\mathscr{A}$ is 
bounded then $\overline{\text{co}}(A)$ is UA.
\end{theorem}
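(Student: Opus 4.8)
The plan is to combine Maurey's empirical (random convex combination) method with the fact that, for $p\in(1,\infty)$, the space $L^p(\Omega,\F,\mu)$ has Rademacher type $r=\min\{p,2\}>1$; write $T$ for its type constant. The exponent $\frac{r}{r-1}=\frac{\min\{p,2\}}{\min\{p,2\}-1}$ in $s(\eta)$ is precisely the one produced by a type-$r$ estimate, and the restriction $p>1$ (hence $r>1$) is exactly what keeps this exponent finite and yields genuine decay in the number of terms; for $p=1$ one only has the trivial type $1$, the method gives no decay, and the statement indeed excludes that case. I expect the symmetrization-plus-type estimate below to be the heart of the proof, the remainder being combinatorial bookkeeping and a density step to reach the closed convex hull.

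First I would prove the empirical approximation step. Fix $f\in\mathscr{A}_K$; since $f\in\overline{\mathrm{co}}(\mathscr{A})$, choose a finite convex combination $f'=\sum_j\lambda_j g_j$ with $g_j\in\mathscr{A}$ and $\|f-f'\|_p\le\delta$, where $\delta>0$ will be taken small. Let $Z_1,\dots,Z_m$ be i.i.d. $\mathscr{A}$-valued random variables with $\PP(Z_i=g_j)=\lambda_j$, so that $\EE[Z_i]=f'$, and note that $\|Z_i-f'\|_p\le\|Z_i-f\|_p+\delta\le K+\delta$ because $g_j\in\mathscr{A}$ and $f\in\mathscr{A}_K$. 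Introducing an independent copy $(Z_i')$ and Rademacher signs $(\sigma_i)$, the standard symmetrization inequality gives
\begin{equation*}
\EE\Big\|\tfrac1m\sum_{i=1}^m Z_i-f'\Big\|_p\;\le\;\frac{2}{m}\,\EE\Big\|\sum_{i=1}^m\sigma_i\,(Z_i-f')\Big\|_p ,
\end{equation*}
and, conditioning on $(Z_i)$, applying the type-$r$ inequality to the Rademacher sum and then Jensen's inequality (concavity of $t\mapsto t^{1/r}$), one obtains
\begin{equation*}
\EE\Big\|\sum_{i=1}^m\sigma_i\,(Z_i-f')\Big\|_p\;\le\;T\Big(\sum_{i=1}^m\EE\|Z_i-f'\|_p^{\,r}\Big)^{1/r}\;\le\;T\,m^{1/r}(K+\delta).
\end{equation*}
Hence $\EE\|\tfrac1m\sum_i Z_i-f'\|_p\le 2T(K+\delta)\,m^{-(1-1/r)}$, so some realization yields $h_1,\dots,h_m\in\mathscr{A}$ with $\|\tfrac1m\sum_i h_i-f\|_p\le\delta+2T(K+\delta)\,m^{-(1-1/r)}$.

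Next I would convert this into the estimate on $N_{p,\varepsilon}$. Fix $\eta\in(0,1)$ and take $m=s(\eta)$; for $C$ a suitable fixed multiple of $T$ (absorbing the rounding in the definition of $s(\eta)$) this makes $2TK\,m^{-(1-1/r)}<\eta\varepsilon$, and then a small enough $\delta$ gives $\|\tfrac1m\sum_i h_i-f\|_p\le\eta\varepsilon$. Writing $k_0=N_{p,(1-\eta)\varepsilon}(\mathscr{A})$ (finite since $\mathscr{A}$ is UA), each $h_i\in\mathscr{A}$ admits $\phi_i\in\mathscr{G}_{p,k_0}$ with $\|h_i-\phi_i\|_p\le(1-\eta)\varepsilon$; if $\mathcal P_i$ is the partition (of at most $k_0$ sets) on which $\phi_i$ is constant, then $\tfrac1m\sum_i\phi_i$ is constant on the common refinement $\bigvee_i\mathcal P_i$, which has at most $k_0^{\,m}$ atoms, so $\tfrac1m\sum_i\phi_i\in\mathscr{G}_{p,k_0^{\,m}}$. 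The triangle inequality then gives
\begin{equation*}
\Big\|f-\tfrac1m\sum_{i=1}^m\phi_i\Big\|_p\le\Big\|f-\tfrac1m\sum_{i=1}^m h_i\Big\|_p+\tfrac1m\sum_{i=1}^m\|h_i-\phi_i\|_p\le\eta\varepsilon+(1-\eta)\varepsilon=\varepsilon ,
\end{equation*}
so $N_{p,\varepsilon}(\mathscr{A}_K)\le k_0^{\,m}=\big(N_{p,(1-\eta)\varepsilon}(\mathscr{A})\big)^{s(\eta)}$; minimizing over $\eta\in(0,1)$ yields the stated bound. Finally, if $\mathscr{A}$ is bounded by $R$ then every $f\in\overline{\mathrm{co}}(\mathscr{A})$ satisfies $\|f-g\|_p\le\|f\|_p+\|g\|_p\le 2R$ for all $g\in\mathscr{A}$, whence $\overline{\mathrm{co}}(\mathscr{A})=\mathscr{A}_{2R}$, and the finiteness of the $N_{p,(1-\eta)\varepsilon}(\mathscr{A})$ shows $\overline{\mathrm{co}}(\mathscr{A})$ is UA. The delicate points are the symmetrization/type estimate and the balancing of the two error contributions through the choice of $m=s(\eta)$; the reduction through the closure is only a minor density technicality.
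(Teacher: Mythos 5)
Your proof is correct and follows essentially the same route as the paper: both arguments rest on the convex approximation rate $d(\mathrm{co}_n(\mathscr{A}),f)\le CK\,n^{-(1-1/r)}$ coming from the Rademacher type $r=\min\{p,2\}$ of $L^p$, followed by the same refinement count $k_0^{m}$ for the common partition and the same splitting of the error into $\eta\varepsilon+(1-\eta)\varepsilon$. The only difference is that where the paper cites Theorem 2.5 of Donahue--Darken--Gurvits--Sontag for that rate, you reprove it via Maurey's empirical method (symmetrization plus the type inequality), which makes the argument self-contained but is not a different idea; your absorption of the rounding in $s(\eta)$ into the constant $C$ matches the equally informal convention in the paper's own statement.
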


\begin{proof}
Fix $K\geq 0$. For $n\in\mathbb N$, define $$\text{co}_n(\mathscr{A})=\left\{\sum_{i=1}^{n}a_if_i\ |\
a_i\geq 0,\ \sum_{i=1}^{n}a_i=1,\ f_i\in\mathscr{A}\right\}.$$ Remember that $L^p(\Omega,\F,\mu)$ has
non-trivial Rademacher type $r=\min\{p,2\}$ (see Theorem 6.2.14 in \cite{AlbiacKalton}). By Theorem $2.5$ of \cite{ref1}, one has that
$$
d(\text{co}_n(\mathscr{A}),f)\leq\frac{CK}{n^{1-\frac{1}{r}}}
$$ 
for all $f\in\mathscr{A}_K$ and $n\in\mathbb N$ where $C$ is a constant depending on $(\Omega,\F,\mu)$ and $p$.
Therefore, if we take $\varepsilon>0, \eta \in  (0,1)$ and $n_0=\left[\frac{CK}{\eta\varepsilon}\right]^{\frac{r}{r-1}}+1$, 
we will have that $d(\text{co}_{n_0}(\mathscr{A}),f)<\eta\varepsilon$, for all $f\in\mathscr{A}_K$.

Thus, if $f_0\in\mathscr{A}_K$ there exists $g_0=\sum_{i=1}^{n_0}a_if_i\in\text{co}_{n_0}(\mathscr{A})$ such that
$\|f_0-g_0\|_p<\eta\varepsilon$. On the other hand, since $\mathscr{A}$ is UA, there exists $h_i\in\mathscr{G}_{p,k}$ 
where $k=N_{p,(1-\eta)\varepsilon}(\mathscr{A})$ such that
$$
\|f_i-h_i\|_p\le (1-\eta)\varepsilon
$$ 
for all $i\in\{1,...,n_0\}$. One can deduce that 
$$
\left\|f-\sum_{i=1}^{n_0}a_ih_i\right\|_p\leq\|f_0-g_0\|_p+\left\|
\sum_{i=1}^{n_0}a_if_i-\sum_{i=1}^{n_0}a_ih_i\right\|_p\le \eta\varepsilon+
\sum_{i=1}^{n_0} a_i (1-\eta)\varepsilon=\e
$$ 
with $\sum_{i=1}^{n_0}a_ih_i\in\mathscr{G}_{p,k^{n_0}}$. 
We conclude that $N_{p,\varepsilon}(\mathscr{A}_K)\leq k^{n_0}$.
\end{proof}

\begin{remark}
Note that if $\mathscr{A}$ is an unbounded UA set then $\overline{\text{co}}(\mathscr{A})$ may not be UA.
In fact, $\mathscr{G}_{p,2}$ is UA but $\overline{\text{co}}(\mathscr{G}_{p,2})=L^p(\Omega,\F,\mu)$ (since
$\text{co}(\mathscr{G}_{p,2}) $ is the set of simple functions) is not UA in general for any $p\in[1,\infty]$.
Remark that the previous theorem is not interesting if $p=\infty$ since any bounded set is UA by Proposition \ref{UA_infty}.

\end{remark}

\begin{remark}
The previous theorem is false if $p=1$. In fact remember that $B_{\ell_1}=\overline{co}(Ext(B_{\ell_1}))$ and $Ext(B_{\ell_1})=
\{\pm\delta_n\}_{n\in\mathbb N}$, where $Ext(B_{\ell_1})$ is the set of extreme points of $B_{\ell_1}$. It follows that 
$Ext(B_{\ell_1})$ is UA but we 
have seen that $B_{\ell_1}$ is not UA (see Theorem \ref{ballUA}). More generally, using the previous result, it is easy to 
show that there exists a UA set $\mathscr{A}\subset\ell_p$ such that 
$B_{\ell_p}=\overline{co}(\mathscr{A})$ if and only if $p\in\{1,\infty\}$.
\end{remark}

In the next result we study stability properties of UA classes under H\"older transformations.
Recall that a real function $\Psi$ is uniformly $\alpha$-H\"older if there exists a constant $K$, 
such that
$$
|\Psi(x)-\Psi(y)|\le K |x-y|^\alpha.
$$
With this definition, the identity function is not uniformly $\alpha$-H\"older for $\alpha<1$. 
To enlarge the class
of uniformly $\alpha$-H\"older functions we consider the following classes of H\"older functions, 
denoted $\mathbb{H}(K,\alpha)$
for $0<\alpha\le 1$, which
consists of real functions $\Psi$ such that for all $x,y$, it holds
$$
|\Psi(x)-\Psi(y)|\le K (|x|+|y|+1)^{1-\alpha} |x-y|^{\alpha}.
$$
We can assume without loss of generality that $K\ge 1$.
We notice that $\mathbb{H}(K,1)$ is the set of $K$-Lipschitz functions. If $0<\beta\le \alpha$, then 
$\mathbb{H}(K,\alpha)\subset \mathbb{H}(K,\beta)$. Also $\mathbb{H}(K,\alpha)$ contains the class of
uniformly H\"older
functions.

\begin{proposition} Assume that $(\Omega,\F,\mu)$ is a finite measure space and $\mathscr{A}$ is UA
in $L^q(\Omega,\F,\mu)$, for some $q\in [1,\infty]$. Consider $\alpha\in(0,1]$, and we assume further
that $\mathscr{A}$ is bounded in $L^q(\Omega,\F,\mu)$ when $\alpha<1$. Then, 
the $\mathbb{H}(K,\alpha)$-transform of $\mathscr{A}$ given by
$$
\mathbb{H}(K,\alpha)(\mathscr{A})=\{\Psi(f):\, f\in \mathscr{A}, \, \Psi \in \mathbb{H}(K,\alpha)\}
$$
is UA in $L^p(\Omega,\F,\mu)$ for any $1\le p\le q$. Moreover, for $\e\in(0,1]$
$$
N_{p,\e}(\mathbb{H}(K,\alpha)(\mathscr{A}))\le N_{q,(\e/\Gamma)^{\frac{1}{\alpha}}}(\mathscr{A}),
$$
where 
$$
\Gamma=\Gamma(\alpha,p,q)=K\mu(\Omega)^{-r} \begin{cases} 
\left(2B+1+\mu(\Omega)^{\frac{1}{q}}\right)^{1-\alpha} &\hbox{if } \alpha<1\\
1 &\hbox{if } \alpha=1
\end{cases}
$$ 
with $B$ a bound for $\mathscr{A}$ in $L^q(\Omega,\F,\mu)$ and $r=\frac{1}{p}-\frac{1}{q}$.
\end{proposition}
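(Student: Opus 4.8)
The plan is to bootstrap from the uniform approximability of $\mathscr{A}$ in $L^q$ using two elementary facts: a H\"older map does not increase the number of values taken by a simple function, and on a finite measure space one has $\|\cdot\|_p\le\mu(\Omega)^{r}\|\cdot\|_q$ for $p\le q$, with $r=\frac1p-\frac1q$ (the inequality already exploited in Corollary~\ref{UA_lq}). Fix $\e\in(0,1]$ and put $\delta=(\e/\Gamma)^{1/\alpha}$, with $\Gamma$ the constant of the statement. Given $f\in\mathscr{A}$ and $\Psi\in\mathbb{H}(K,\alpha)$, since $\mathscr{A}$ is UA in $L^q$ I can choose $h\in\mathscr{G}_{q,k}$ with $k=N_{q,\delta}(\mathscr{A})$ and $\|f-h\|_q\le\delta$. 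The crucial observation is that $h$ takes at most $k$ values, hence so does $\Psi(h)$; as $h$ is bounded and $\mu$ is finite, $\Psi(h)$ is bounded and therefore $\Psi(h)\in\mathscr{G}_{p,k}$. Consequently the whole problem reduces to estimating $\|\Psi(f)-\Psi(h)\|_p$ by a power of $\|f-h\|_q$, for then $\|\Psi(f)-\Psi(h)\|_p\le\Gamma\,\delta^{\alpha}=\e$ will yield $N_{p,\e}(\mathbb{H}(K,\alpha)(\mathscr{A}))\le N_{q,\delta}(\mathscr{A})=N_{q,(\e/\Gamma)^{1/\alpha}}(\mathscr{A})$.

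The case $\alpha=1$ is immediate. Here $\Psi$ is $K$-Lipschitz, so $|\Psi(f)-\Psi(h)|\le K|f-h|$ pointwise, and the finite-measure embedding gives $\|\Psi(f)-\Psi(h)\|_p\le K\|f-h\|_p\le K\mu(\Omega)^{r}\|f-h\|_q$. This is the required estimate with the bracket $(2B+1+\mu(\Omega)^{1/q})^{1-\alpha}=1$, and for $K=1$ it literally recovers the inequality used in Corollary~\ref{UA_lq}; in particular the boundedness hypothesis on $\mathscr{A}$ is not needed when $\alpha=1$.

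For $\alpha<1$ I would begin from the pointwise inequality $|\Psi(f)-\Psi(h)|\le K\,G^{\,1-\alpha}|f-h|^{\alpha}$, where $G=|f|+|h|+1$, raise it to the power $p$, integrate, and split the integrand by H\"older's inequality with conjugate exponents $s'=q/(p\alpha)$ and $s=q/(q-p\alpha)$ (both admissible because $p\alpha<q$). Taking $p$-th roots this gives
$$
\|\Psi(f)-\Psi(h)\|_p\le K\,\|G\|_{m}^{\,1-\alpha}\,\|f-h\|_q^{\alpha},\qquad m=\frac{pq(1-\alpha)}{q-p\alpha}.
$$
Since $p\le q$ one checks $m\le q$, so the finite-measure embedding yields $\|G\|_m\le\mu(\Omega)^{\frac1m-\frac1q}\|G\|_q$, while the growth factor is bounded uniformly by $\|G\|_q\le\|f\|_q+\|h\|_q+\mu(\Omega)^{1/q}\le 2B+1+\mu(\Omega)^{1/q}$, using $\|f\|_q\le B$ and $\|h\|_q\le B+\delta\le B+1$. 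The decisive computation is the identity $\left(\frac1m-\frac1q\right)(1-\alpha)=\frac1p-\frac1q=r$, which is independent of $\alpha$ and makes all the powers of $\mu(\Omega)$ coalesce into a single factor; assembling the pieces produces the constant $\Gamma$ of the statement together with the bound $\|\Psi(f)-\Psi(h)\|_p\le\Gamma\|f-h\|_q^{\alpha}$.

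The main obstacle is precisely this exponent bookkeeping: one must select the H\"older exponents so that the factor $|f-h|$ is extracted exactly as $\|f-h\|_q^{\alpha}$ (so that UA in $L^q$ can be invoked through $N_{q,\cdot}(\mathscr{A})$), while keeping the exponent $m$ of the growth factor below $q$ and verifying the collapse identity for the powers of $\mu(\Omega)$. A secondary point is the uniform control of $\|G\|_q$, which is where the hypothesis that $\mathscr{A}$ is bounded in $L^q$ and the restriction $\e\le 1$, keeping $\delta$ bounded, enter the argument. Once the displayed estimate $\|\Psi(f)-\Psi(h)\|_p\le\Gamma\|f-h\|_q^{\alpha}$ is established, the choice $\delta=(\e/\Gamma)^{1/\alpha}$ closes the proof and gives the stated inequality $N_{p,\e}(\mathbb{H}(K,\alpha)(\mathscr{A}))\le N_{q,(\e/\Gamma)^{1/\alpha}}(\mathscr{A})$.
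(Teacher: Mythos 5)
Your proof is correct and rests on the same two pillars as the paper's: the observation that $\Psi(h)$ takes at most as many values as $h$ (hence $\Psi(h)\in\mathscr{G}_{p,k}$ on a finite measure space), and an application of H\"older's inequality to the pointwise bound $|\Psi(f)-\Psi(h)|\le K\,G^{1-\alpha}|f-h|^{\alpha}$ with $G=|f|+|h|+1$, together with the uniform control $\|G\|_q\le 2B+1+\mu(\Omega)^{1/q}$. The only real difference is in the bookkeeping: the paper first proves $\|\Psi(f)-\Psi(g)\|_q\le K\left(2B+1+\mu(\Omega)^{1/q}\right)^{1-\alpha}\|f-g\|_q^{\alpha}$ entirely inside $L^q$, using the conjugate exponents $1/\alpha$ and $1/(1-\alpha)$, and only afterwards passes to $L^p$ via Corollary \ref{UA_lq}; you instead perform a single H\"older step directly in $L^p$ with exponents $q/(p\alpha)$ and $q/(q-p\alpha)$ and absorb the embedding into the auxiliary exponent $m$. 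Your exponent computations ($p\alpha s'=q$, $m\le q$ exactly when $p\le q$, and $\left(\frac1m-\frac1q\right)(1-\alpha)=r$) all check out, so the two routes produce the same final estimate. Two small caveats: both arguments in fact yield the factor $\mu(\Omega)^{+r}$, not the $\mu(\Omega)^{-r}$ printed in the proposition (which appears to be a sign typo in the statement), so your assertion that the computation ``produces the constant $\Gamma$ of the statement'' should be read with that correction; and, exactly as in the paper's own argument, the bound $\|h\|_q\le B+1$ tacitly requires $\delta=(\e/\Gamma)^{1/\alpha}\le 1$, which is not automatic from $\e\le1$ alone. Neither point is a gap specific to your write-up.
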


\begin{proof} The case $\alpha=1$ is straightforward so, we assume $\alpha<1$.  
We assume first that $p=q$. 
Consider $\e\in(0,1]$, $k=N_{q,\e}(\mathscr{A}), f\in \mathscr{A}, g\in \mathscr{G}_{q,k}$ such that 
$\|f-g\|_q\le \e$ and $\Psi\in \mathbb{H}(K,\alpha)$. We have
$$
\int |\Psi(f(x))-\Psi(g(x))|^q d\mu(x)\le K^q \int (|f(x)|+|g(x)|+1)^{q(1-\alpha)}\,
|f(x)-g(x)|^{q\alpha} d\mu(x).
$$
Now, we apply H\"older's inequality for $s=\frac{1}{\alpha}$ and its conjugated index $t=\frac{1}{1-\alpha}$ to get
$$
\int |\Psi(f(x))-\Psi(g(x))|^q d\mu(x)\le K^q \left(\int (|f(x)|+|g(x)|+1)^{q} d\mu(x)\right)^{1-\alpha}
\left(\int |f(x)-g(x)|^q d\mu(x)\right)^{\alpha}
$$
which implies
$$
\|\Psi(f)-\Psi(g)\|_q \le K \left(\|f\|_q+\|g\|_q+\|1\|_q\right)^{(1-\alpha)} \|f-g\|_q^{\alpha}.
$$
If $B$ is a bound for $\mathscr{A}$, we conclude that $\|g\|_q\le B+1$, which shows
$$
\|\Psi(f)-\Psi(g)\|_q \le K \left(2B+1+\mu(\Omega)^{\frac{1}{q}}\right)^{(1-\alpha)} \e^{\alpha}
=\Gamma \e^{\alpha}.
$$
Since $\Psi(g)\in \mathscr{G}_{q,k}$, we deduce that
$$
N_{q,\Gamma\e^\alpha}(\mathbb{H}(K,\alpha)(\mathscr{A}))\le N_{q,\e}(\mathscr{A}),
$$
and the result is shown in this case.
The case $p<q$ follows from Corollary \ref{UA_lq}.
\end{proof}
We point out that under the hypothesis of the Theorem, we have 
$
\mathscr{A}\subset \mathbb{H}(K,\alpha)(\mathscr{A}).
$

\vspace{1cm}
\textbf{Acknowledgements:}
G. Grelier was supported by the Grants of Ministerio de Econom\'ia, Industria y Competitividad MTM2017-83262-C2-2-P; 
Fundaci\'on S\'eneca Regi\'on de Murcia 20906/PI/18; 
and by MICINN 2018 FPI fellowship with reference PRE2018-083703, associated to grant MTM2017-83262-C2-2-P. 
J. San Mart\'in was supported in part by BASAL ANID FB210005.

\nocite{*}

\end{document}